\newtheorem{thm}{Theorem}[section]
\newtheorem{lem}[thm]{Lemma}
\newtheorem{cor}[thm]{Corollary}
\newtheorem{exmp}[thm]{Example}
\numberwithin{equation}{section}
\theoremstyle{definition}
\newtheorem{defn}[thm]{Definition}
\begin{document}

\title 
{Enumeration of ${\rm AGL}(\frac m3,\,{\Bbb F}_{p^3})$-Invariant Extended Cyclic Codes}

\author{Xiang-dong Hou}

\address{Department of Mathematics, University of South Florida, Tampa, Florida 33620}
\curraddr{}
\email{xhou@math.usf.edu}

\keywords{affine invariant code, affine linear group, extended cyclic code, partial order, simplicial cone, walk}

\subjclass{}

\begin{abstract}
Let $p$ be a prime and let $r,\ e,\ m$ be positive integers such that $r|e$ and $e|m$. 
The enumeration of linear codes of length $p^m$ over ${\Bbb F}_{p^r}$ which are invariant under the 
affine linear group ${\rm AGL}(\frac me,\,{\Bbb F}_{p^e})$ is equivalent to the enumeration of
certain ideals in a partially ordered set $({\mathcal U},\,\prec)$ where
${\mathcal U}=\{0,1,\cdots,\frac me(p-1)\}^e$ and $\prec$ is defined by an $e$-dimensional simplicial cone.
When $e=2$, the enumeration problem was solved in an earlier paper. In the present paper, we consider the cases
$e=3$. We describe methods for enumerating all ${\rm AGL}(\frac m3,\,{\Bbb F}_{p^3})$-invariant linear codes of length $p^m$ over ${\Bbb F}_{p^r}$
\end{abstract}
\maketitle

\section{Introduction}

Extended cyclic codes which are invariant under a certain affine linear group were first
studied by Kasami, Lin and Peterson \cite{Kas68} and by  Delsarte \cite{Del70}. 
These codes were further investigated by 
Charpin \cite{Cha90} \cite{Cha98}, by Berger \cite{Ber96-1}, Berger and Charpin \cite{Ber96} 
\cite{Ber99} in the context of permutation groups, and by
Charpin and Levy-Dit-Vehel \cite{Cha94}
in conjunction with self-duality. 
Extended cyclicity follows from affine invariance except when the code is the full ambient space; see later in the introduction.
Affine-invariant codes are interesting 
because of the large automorphism groups they possess. Examples of affine-invariant codes include the $q$-ary Reed-Muller codes which are precisely $\text{AGL}(m,\Bbb F_q)$-invariant codes of length $q^m$ over $\Bbb F_q$.

The interest of affine-invariant codes is not limited to coding theory. As we will
see below, such codes are precisely submodule of a certain module over the group algebra $\Bbb K[\text{AGL}(n,\Bbb F)]$ where $\Bbb F$ and $\Bbb K$ are two finite fields of the same characteristic. Therefore, affine-invariant codes provide concrete examples of modular representations of the affine linear group $\text{AGL}(n,\Bbb F)$. 

The present paper and its predecessor \cite{Hou} deal with the enumeration of 
affine-invariant codes. Delsarte's characterization of affine--invariant extended cyclic codes in terms of defining sets \cite{Del70} is the foundation of our work. The starting point of our approach is a reformulation (Theorem~\ref{T1.1}) of Delsarte's characterization; the reformulation changes the enumeration problem from an algebraic one to a combinatorial and geometric one.

A comprehensive introduction to affine-invariant extended cyclic codes can be found in 
\cite{Ber96}. A detailed introduction to our approach was given in \cite{Hou}. Thus in the
present introduction, we only give the essential facts to be used in the paper. 

Let $p$ be a prime and $r,\ m,\ e$ positive integers such that $e|m$. Identify
${\Bbb F}_{p^m}$ with ${\Bbb F}_{p^e}^{m/e}$.
Then the affine linear group ${\rm AGL}(\frac me, {\Bbb F}_{p^e})$ acts on ${\Bbb F}_{p^m}$
hence also acts on the group algebra ${\Bbb F}_{p^r}[({\Bbb F}_{p^m},\,+)]$. Put 
\[
G_{m,e}=
{\rm AGL}(\frac me,\, {\Bbb F}_{p^e}).
\] 
Then ${\Bbb F}_{p^r}[({\Bbb F}_{p^m},\,+)]$ 
is an ${\Bbb F}_{p^r}[G_{m,e}]$-module. Define
\[
{\mathcal M}=\Bigl\{\sum_{g\in{\Bbb F}_{p^m}}a_gX^g\in{\Bbb F}_{p^r}[({\Bbb F}_{p^m},\,+)]:
\sum_{g\in{\Bbb F}_{p^m}}a_g=0\Big\}.
\]
${\Bbb F}_{p^r}[G_{m,e}]$-submodules of ${\Bbb F}_{p^r}[({\Bbb F}_{p^m},\,+)]$ are $G_{m,e}$-{\em invariant
codes} over ${\Bbb F}_{p^r}$; ${\Bbb F}_{p^r}[G_{m,e}]$-submodules of ${\mathcal M}$ are $G_{m,e}$-{\em invariant
extended cyclic codes} over ${\Bbb F}_{p^r}$. In fact, every proper  
${\Bbb F}_{p^r}[G_{m,e}]$-submodules of ${\Bbb F}_{p^r}[({\Bbb F}_{p^m},\,+)]$ must be contained in
${\mathcal M}$ (\cite{Hou}).

As pointed out in \cite{Hou}, in order to determine ${\Bbb F}_{p^r}[G_{m,e}]$-submodules of ${\mathcal M}$ for all 
$r$, it suffices to determine those with $r|e$. Thus we always assume $r|e$.

Let
\[
P=\left[
\begin{matrix}
p^0&p^{e-1}&\cdots&p^1\cr
p^1&p^0&\cdots&p^2\cr
\vdots&\vdots&\ddots&\vdots\cr
p^{e-1}&p^{e-2}&\cdots&p^0\cr
\end{matrix}\right].
\]
For $u,v\in{\Bbb R}^e$, we say $u\prec v$ if $(u-v)P$ has all the coordinates $\le 0$.
Let $\Delta\subset {\Bbb R}^e$ be the set of all linear combinations of the rows of 
\[
(1-p^e)P^{-1}=\left[
\begin{matrix}
1&0&0&\cdots&0&-p\cr
-p&1&0&\cdots&0&0\cr
0&-p&1&\cdots&0&0\cr
\vdots&\vdots&\vdots&\ddots&\vdots&\vdots\cr
0&0&0&\cdots&1&0\cr
0&0&0&\cdots&-p&1\cr
\end{matrix}\right]
\]
with nonnegative coefficients. Namely, $\Delta$ is the $e$-dimensional simplicial cone
spanned by the rows of $(1-p^e)P^{-1}$. It is clear that $u\prec v$ if and only if 
$u\in v+\Delta$. The relation $\prec$ is a partial order in ${\Bbb R}^e$.

 Let
\[
A=\left[
\begin{matrix}
0&&&&&1\cr
1&0\cr
&1&0\cr
&&&\ddots\cr
&&&&0\cr
&&&&1&0\cr
\end{matrix}\right]_{e\times e}
\]
be the circulant permutation matrix. Since $AP=PA$, the matrix $A$ preserves the partial order $\prec$,
i.e., $u\prec v$ if and only if $uA\prec vA$.

For any subset $\Omega\subset{\Bbb R}^e$, $(\Omega,\,\prec)$ is a partially ordered set.
An {\em ideal} of $(\Omega,\,\prec)$ is a subset $I\subset\Omega$ such that for each $u\in I$
and $v\in \Omega$, $v\prec u$ implies $v\in I$.

Let
\[
{\mathcal U}=\Bigl\{0, 1, \cdots, \frac me(p-1)\Bigr\}^e.
\]
For each $s\in\{0, 1,\cdots,p^m-1\}$, write
\[
s=s_0p^0+\cdots+s_{m-1}p^{m-1},\quad 0\le s_i\le p-1,
\]
and define
\[
\sigma(s)=\Bigl[
\sum_{i\equiv 0\!\!\!\pmod e}s_i,\ \sum_{i\equiv 1\!\!\! \pmod e}s_i,\ \dots,\ \sum_{i\equiv e-1\!\!\! \pmod e}s_i
\Bigr]\in{\mathcal U}.
\]
The following is a reformulation of Delsarte's characterization of affine-invariant
extended cyclic codes \cite{Del70}:

\begin{thm}\label{T1.1} {\rm (\cite{Hou})} There is a one-to-one correspondence between the 
${\Bbb F}_{p^r}[G_{m,e}]$-submodules of ${\Bbb F}_{p^r}[({\Bbb F}_{p^m},\,+)]$ and the $A^r$-invariant ideals of
$({\mathcal U},\,\prec)$. If $I$ is an $A^r$-invariant ideal of $({\mathcal U},\,\prec)$, the corresponding ${\Bbb F}_{p^r}[G_{m,e}]$-submodules of ${\Bbb F}_{p^r}[({\Bbb F}_{p^m},\,+)]$ is
\begin{equation}\label{MI}
\begin{split}
M(I):=&\Bigl\{\sum_{g\in{\Bbb F}_{p^m}}a_gX^g\in{\Bbb F}_{p^r}[({\Bbb F}_{p^m},\,+)]:
\sum_{g\in{\Bbb F}_{p^m}}a_g g^s=0\cr
&\text{for all $s\in\{0, 1, \cdots, p^m-1\}$ with $\sigma(s)\in I$}\Bigr\}.
\end{split}
\end{equation}
In {\rm (\ref{MI})}, $0^0$ is defined as $1$. Moreover, $M(I)\subset{\mathcal M}$ if and only if
$I\ne\emptyset$.
\end{thm}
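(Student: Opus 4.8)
The plan is to deduce Theorem~\ref{T1.1} from Delsarte's classification of affine-invariant extended cyclic codes \cite{Del70} by transporting each of Delsarte's conditions along the map $\sigma$. First recall the dictionary between such codes and defining sets. A $G_{m,e}$-invariant extended cyclic code over $\Bbb F_{p^m}$ --- and, after extension of scalars, any $\Bbb F_{p^r}[G_{m,e}]$-submodule of $\Bbb F_{p^r}[(\Bbb F_{p^m},+)]$ --- is completely determined by its \emph{defining set} $T(C)=\{s\in\{0,1,\dots,p^m-1\}:\sum_{g\in\Bbb F_{p^m}}a_gg^s=0\text{ for all }\sum_g a_gX^g\in C\}$, with $0^0:=1$ and $0^s:=0$ for $s>0$. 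Such a submodule descends to $\Bbb F_{p^r}$ exactly when $T(C)$ is stable under $s\mapsto p^rs\bmod(p^m-1)$, since raising $\sum_g a_gg^s$ to the power $p^r$ and using $a_g\in\Bbb F_{p^r}$ together with $g^{p^m-1}=1$ replaces $s$ by $p^rs\bmod(p^m-1)$. In the form convenient here, Delsarte's theorem says that, among codes with a Galois-stable defining set, $C$ is $G_{m,e}$-invariant precisely when $T(C)$ is also \emph{downward closed} under a combinatorial partial order $\preceq_{\mathrm D}$ on $\{0,\dots,p^m-1\}$ generated by the $p$-adic shadows of translations (lowering individual $p$-adic digits of $s$) and of $\Bbb F_{p^e}$-linear maps (splitting $s=k_0+\cdots+k_{m/e-1}$ with $\binom{s}{k_0,\dots,k_{m/e-1}}\not\equiv0\pmod p$ and passing to $\sum_jk_jp^{ej}\bmod(p^m-1)$); and that $T(C)\ni 0$ unless $C$ is the whole ambient module.

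The second step transports these conditions through $\sigma$. I would verify: (i) $\sigma$ maps $\{0,\dots,p^m-1\}$ onto $\mathcal U$, each coordinate of $\sigma(s)$ being an independent sum of the $m/e$ digits of $s$ lying in one residue class mod $e$; (ii) $\sigma\bigl(p^rs\bmod(p^m-1)\bigr)=\sigma(s)A^{-r}$, because a cyclic shift of the $p$-adic digits of $s$ by $r$ positions permutes the residue classes mod $e$ cyclically (the shift being well-defined on residues since $r\mid e$), so that $\sigma^{-1}(I)$ is Galois-stable if and only if $IA^{-r}=I$, i.e.\ --- since $A$ has order $e$ --- if and only if $I$ is $A^r$-invariant; and (iii) the decisive fact that $\sigma$ carries $\preceq_{\mathrm D}$ onto $\prec$, in the precise sense that $s\preceq_{\mathrm D}t$ forces $\sigma(s)\prec\sigma(t)$, while conversely every $v\in\mathcal U$ with $\sigma(s)\prec v$ equals $\sigma(t)$ for some $t$ with $s\preceq_{\mathrm D}t$. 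Granting (i)--(iii), the defining sets that are Galois-stable and $\preceq_{\mathrm D}$-downward-closed are exactly the sets $\sigma^{-1}(I)$ with $I$ an $A^r$-invariant ideal of $(\mathcal U,\prec)$; hence $I\mapsto M(I)$ is a bijection onto the $\Bbb F_{p^r}[G_{m,e}]$-submodules of $\Bbb F_{p^r}[(\Bbb F_{p^m},+)]$, with $T(M(I))=\sigma^{-1}(I)$, and $I$ is recovered from $M(I)$ uniquely because $\sigma$ is onto, so $\sigma(\sigma^{-1}(I))=I$.

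For the last assertion, note that $P$ has nonnegative entries, so $-vP\le 0$ for every $v\ge 0$; thus $[0,\dots,0]$ is the $\prec$-least element of $\mathcal U$, and a nonempty ideal $I$ necessarily contains it. Therefore $I\ne\emptyset$ is equivalent to $0\in\sigma^{-1}(I)=T(M(I))$, which by the final clause of Delsarte's theorem is precisely the condition $M(I)\subseteq\mathcal M$; and if $I=\emptyset$ then $M(I)$ is the full ambient module, which is not contained in $\mathcal M$.

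The hardest part is fact (iii), and its content is geometric. One must show that the elementary moves underlying $\preceq_{\mathrm D}$ --- in particular the carry moves relating residue class $j-1$ to residue class $j$, with the wrap-around $p^m\equiv 1$ linking class $e-1$ to class $0$ --- project under $\sigma$ to exactly the extreme rays of $\Delta$: the trade at class $j$ is the vector $\mathbf e_j-p\,\mathbf e_{j-1}$ (indices read mod $e$), which is a positive multiple of one of the rows of $(1-p^e)P^{-1}$, and these rows are precisely the extreme rays of $\Delta=\{w:wP\le 0\}$. The genuinely delicate point is to check that no information about $\preceq_{\mathrm D}$-ideals is lost when several integers with different $p$-adic digit vectors collapse to the same $\sigma$-value --- equivalently, that $\sigma$ induces a bijection between $\preceq_{\mathrm D}$-ideals of $\{0,\dots,p^m-1\}$ and $A^r$-invariant ideals of $(\mathcal U,\prec)$. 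Verifying this, together with a careful accounting of the mod-$(p^m-1)$ reductions hidden in the $\Bbb F_{p^e}$-linear moves, is the technical core of the argument and is the part carried out in \cite{Hou}.
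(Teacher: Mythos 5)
The paper does not prove Theorem~\ref{T1.1}; it cites it from \cite{Hou}. So there is no in-paper argument to compare against, and your proposal is in effect a reconstruction of the proof in \cite{Hou}. The route you choose is the right one, and it is the one described informally in the introduction: start from Delsarte's characterization of affine-invariant extended cyclic codes via defining sets, and transport the two conditions (Galois stability and downward closure under the digit-lowering/carry order $\preceq_{\mathrm D}$) through $\sigma$.

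The pieces you work out are correct. Your items (i) and (ii) are straightforward: $\sigma$ is onto $\mathcal U$ because each residue class of digits contributes an independent sum in $\{0,\dots,\frac me(p-1)\}$, and the computation $\sigma(p^r s\bmod(p^m-1))=\sigma(s)A^{-r}$ is right (a cyclic shift of the $m$ digits by $r$ rotates the $e$ residue classes by $r$), from which $A^r$-invariance of $I$ matches Galois-stability of $\sigma^{-1}(I)$. The final clause is also correctly handled: since $P$ has nonnegative entries, $(0,\dots,0)$ is the $\prec$-minimum of $\mathcal U$, so a nonempty ideal always contains it, and $0\in T(M(I))$ is exactly the condition $M(I)\subseteq\mathcal M$.

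The place where your argument does not yet close is exactly the place you flag: item (iii). Showing that the elementary $\preceq_{\mathrm D}$-moves project to the extreme rays $\mathbf e_j-p\,\mathbf e_{j-1}$ of $\Delta$ gives the forward implication $s\preceq_{\mathrm D}t\Rightarrow\sigma(s)\prec\sigma(t)$, but the converse direction and, more importantly, the ``no information is lost on fibers'' claim are the technical heart. Note that two integers $s,t$ with $\sigma(s)=\sigma(t)$ need not be in the same Galois orbit (a permutation of digits within a residue class is generally not a cyclic shift by a multiple of $r$), so one cannot argue that $\sigma$-fibers coincide with Galois orbits; one must instead show directly that a Galois-stable $\preceq_{\mathrm D}$-ideal is saturated with respect to $\sigma$-fibers, i.e.\ that $\sigma(s)=\sigma(t)$ and $s\in T$ force $t\in T$. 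You acknowledge this and defer it to \cite{Hou}, which is honest, but as written it is a genuine gap in the proposal rather than a proved step. A complete proof would need to establish this saturation lemma and the two-sided comparison of $\preceq_{\mathrm D}$ with $\prec$; short of that, what you have is a correct and well-organized outline of the \cite{Hou} argument, not a self-contained proof.
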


\noindent{\bf Note}. When $e=m$, i.e., when ${\mathcal U}=\{0,1,\dots,p-1\}^e$, the partial
order $\prec$ in ${\mathcal U}$ is the cartesian product of linear orders. Namely, $(x_1,\dots,x_e)\prec(y_1,\dots,y_e)$ in ${\mathcal U}$ if and only if $x_i\le y_i$ for
all $1\le i\le e$. However, this is not the case when $1<e<m$. 

\begin{exmp}\label{E1.2}
\rm
Let $p=3$, $m=6$, $e=3$, $r=1$ and
\[
\begin{split}
I=\bigl\{&(0,0,0),\ (1,0,0),\ (0,1,0),\ (0,0,1),\cr
&(0,1,1),\ (1,0,1),\ (1,1,0),\ (1,1,1),\cr
&(2,0,0),\ (0,2,0),\ (0,0,2),\cr
&(3,0,0),\ (0,3,0),\ (0,0,3)\bigr\}.
\end{split}
\]

\vskip5mm
\setlength{\unitlength}{8mm}
\[
\begin{picture}(8,8)
\put(3,3){\vector(1,0){5}}
\put(3,3){\vector(0,1){5}}
\put(3,3){\vector(-1,-1){3}}
\put(8,2.8){\makebox(0,0)[t]{$\scriptstyle x$}}
\put(3.1,8){\makebox(0,0)[l]{$\scriptstyle y$}}
\put(0,0.2){\makebox(0,0)[b]{$\scriptstyle z$}}
\put(6,2.8){\makebox(0,0)[t]{$\scriptstyle 3$}}
\put(3.1,6){\makebox(0,0)[l]{$\scriptstyle 3$}}
\put(1.5,1.7){\makebox(0,0)[b]{$\scriptstyle 3$}}
\multiput(3,3)(1,0){4}{{\makebox(0,0){$\scriptstyle \bullet$}}}
\multiput(3,4)(0,1){3}{{\makebox(0,0){$\scriptstyle \bullet$}}}
\multiput(2.5,2.5)(-0.5,-0.5){3}{{\makebox(0,0){$\scriptstyle \bullet$}}}
\put(4,4){{\makebox(0,0){$\scriptstyle \bullet$}}}
\put(2.5,3.5){{\makebox(0,0){$\scriptstyle \bullet$}}}
\put(3.5,2.5){{\makebox(0,0){$\scriptstyle \bullet$}}}
\put(3.5,3.5){{\makebox(0,0){$\scriptstyle \bullet$}}}
\put(2.5,2.5){\line(1,0){1}}
\put(2.5,3.5){\line(1,0){1}}
\put(3,4){\line(1,0){1}}
\put(2.5,2.5){\line(0,1){1}}
\put(3.5,2.5){\line(0,1){1}}
\put(4,3){\line(0,1){1}}
\put(2.5,3.5){\line(1,1){0.5}}
\put(3.5,3.5){\line(1,1){0.5}}
\put(3.5,2.5){\line(1,1){0.5}}
\end{picture}
\]
\[
\text{Figure 1. The $A$-invariant ideal $I$}
\]
\vskip5mm
\noindent
It is easy to see that $I$ is an $A$-invariant ideal of $({\mathcal U},\,\prec)$. We have
\[
\begin{split}
\sigma^{-1}(I)=\bigl\{&0,\ 1,\ 2,\ 3,\ 4,\ 6,\ 9,\ 10,\ 12,\ 13,\ 18,\ 27,\ 28,\ 29,\ 30,\ 36,\ 39,\ 54,\cr
&55,\ 81,\ 82,\ 84,\ 87,\ 90,\ 91,\ 108,\ 117,\ 162,\ 165,\ 243,\ 244,\cr
&246,\ 247,\ 252,\ 261,\ 270,\ 273,\ 324,\ 325,\ 351,\ 486,\ 495\bigr\}.\cr
\end{split}
\]
The ${\Bbb F}_3[G_{6,3}]$-submodule of ${\mathcal M}$ corresponding to $I$ is 
\[
M(I)=\Bigl\{\sum_{g\in{\Bbb F}_{3^6}}a_gX^g\in{\Bbb F}_{3}[({\Bbb F}_{3^6},\,+)]:
\sum_{g\in{\Bbb F}_{3^6}}a_g g^s=0\
\text{for all $s\in\sigma^{-1}(I)$}\Bigr\}.
\]
\end{exmp}

Therefore, the essential problem is how to enumerate the $A^r$-invariant ideals of $({\mathcal U},\,\prec)$.
When $e=1$, the problem is trivial. When $e=2$, the problem has been solved in \cite{Hou}. The present paper deals with the case $e=3$. 
We will describe methods for enumerating all $A^r$-invariant ideals of $({\mathcal U},\,\prec)$ for $e=3$.

\section{Description of the Approach}

For simplicity, an ideal of $(\Omega,\,\prec)$, where $\Omega\subset {\Bbb R}^e$, is called an ideal of $\Omega$.

\begin{lem}\label{L2.1}
{\rm (i)} Let $\Omega\subset\Gamma\subset{\Bbb R}^e$ such that $\Omega$ and $\Gamma$ are $A^r$-invariant.
If $I$ is an $A^r$-invariant ideal of $\Omega$, then there  is an $A^r$-invariant ideal 
$J$ of $\Gamma$ such that $J\cap \Omega=I$.

{\rm (ii)} Let $\Omega\subset{\Bbb R}^e$ and $\Gamma\subset{\Bbb R}^e$. Let $I$ be an ideal of $\Omega$ and
$J$ an ideal of  $\Gamma$ such that $I\cap \Gamma= J\cap\Omega$. Then $I\cup J$ is an ideal of $\Omega\cup\Gamma$ if and only if 
\begin{equation}\label{Iff}
(I+\Delta)\cap\Gamma\subset J\quad\text{and}\quad (J+\Delta)\cap\Omega\subset I.
\end{equation}
\end{lem}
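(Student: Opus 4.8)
The plan is to prove both parts by working directly from the definition of an ideal in terms of the simplicial cone $\Delta$, using the characterization $u \prec v \iff u \in v + \Delta$ together with the $A^r$-invariance of the partial order (recorded in the excerpt as $AP = PA$, hence $u \prec v \iff uA \prec vA$).

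For part (i), I would construct $J$ explicitly as the down-closure of $I$ inside $\Gamma$, namely $J = (I + \Delta) \cap \Gamma$. First I would check that this $J$ is an ideal of $\Gamma$: if $w \in J$ and $v \in \Gamma$ with $v \prec w$, then $v \in w + \Delta$ and $w \in u + \Delta$ for some $u \in I$, so $v \in u + \Delta + \Delta = u + \Delta$ (using that $\Delta$ is a cone, hence closed under addition), giving $v \in (I+\Delta)\cap\Gamma = J$. Next, $J$ is $A^r$-invariant because $I$ and $\Gamma$ are and because $A^r$ preserves $\Delta$ (since $A$ does: $A\Delta = \Delta$ follows from $AP = PA$, or equivalently from $A$ cyclically permuting the generating rays). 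Finally I must verify $J \cap \Omega = I$. The inclusion $I \subset J \cap \Omega$ is immediate since $0 \in \Delta$. For the reverse, suppose $u \in J \cap \Omega$; then $u \in \Omega$ and $u \prec u'$ for some $u' \in I \subset \Omega$, so since $I$ is an ideal of $\Omega$ we get $u \in I$. That closes part (i); the only subtlety is making sure the down-closure is taken inside the correct ambient set and that $\Omega \subset \Gamma$ is used precisely at the last step.

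For part (ii), the two implications are essentially symmetric, so I would prove one direction and indicate that the other is identical by the roles of $\Omega$ and $\Gamma$ (together with the hypothesis $I \cap \Gamma = J \cap \Omega$, which makes the whole setup symmetric). For the ``only if'' direction: assume $I \cup J$ is an ideal of $\Omega \cup \Gamma$, and let $w \in (I + \Delta) \cap \Gamma$. Then $w \in \Gamma \subset \Omega \cup \Gamma$ and $w \prec u$ for some $u \in I \subset I \cup J$, so by the ideal property $w \in I \cup J$; since $w \in \Gamma$ this gives $w \in (I \cup J) \cap \Gamma = (I \cap \Gamma) \cup (J \cap \Gamma) = (J \cap \Omega) \cup J = J$, using $I \cap \Gamma = J \cap \Omega \subseteq J$. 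So $(I+\Delta)\cap\Gamma \subset J$, and symmetrically $(J+\Delta)\cap\Omega \subset I$. For the ``if'' direction: assume (\ref{Iff}), take $u \in I \cup J$ and $v \in \Omega \cup \Gamma$ with $v \prec u$; I would split into cases according to whether $u$ lies in $I$ or $J$ and whether $v$ lies in $\Omega$ or $\Gamma$. If $u \in I$ and $v \in \Omega$, then $v \in I$ since $I$ is an ideal of $\Omega$; if $u \in I$ and $v \in \Gamma$, then $v \in (I+\Delta)\cap\Gamma \subset J$; the two cases with $u \in J$ are handled the same way using the second inclusion in (\ref{Iff}) and $J$ being an ideal of $\Gamma$. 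In every case $v \in I \cup J$, so $I \cup J$ is an ideal of $\Omega \cup \Gamma$.

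I do not expect a serious obstacle here — the statement is essentially a formal manipulation once one adopts the cone description of $\prec$. The one point requiring a little care is the bookkeeping in part (ii): one must remember to use $v \prec u$ in the equivalent form $v \in u + \Delta$ and to invoke the compatibility hypothesis $I \cap \Gamma = J \cap \Omega$ at exactly the right moment (it is what lets the ``boundary'' elements that could belong to either $I$ or $J$ be assigned consistently). The additive-closure property of the cone $\Delta$ — used implicitly when I write $\Delta + \Delta = \Delta$ — is the only geometric input, and it is immediate from $\Delta$ being the set of nonnegative combinations of a fixed finite set of vectors.
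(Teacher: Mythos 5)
Your proposal is correct and takes essentially the same route as the paper: part (i) uses the same explicit construction $J=(I+\Delta)\cap\Gamma$, and part (ii) is the same inclusion argument, merely phrased elementwise where the paper keeps it set-theoretic (the paper notes that $(I+\Delta)\cap(\Omega\cup\Gamma)$ is the ideal generated by $I$ and then intersects with $\Gamma$, which is exactly your element chase packaged up). No gaps.
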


\begin{proof}
(i) Let $J=(I+\Delta)\cap\Gamma$. Then $J$ is an $A^r$-invariant ideal of $\Gamma$. Since $I$ is an ideal of
$\Omega$, we have $J\cap\Omega=(I+\Delta)\cap\Omega=I$.

(ii) $(\Rightarrow$) Since $(I+\Delta)\cap(\Omega\cup\Gamma)$ is the ideal of $\Omega\cup\Gamma$ generated by $I$, i.e., the smallest
ideal of $\Omega\cup\Gamma$ containing $I$, and since $I\cup J$ is an ideal of $\Omega\cup \Gamma$, we have $(I+\Delta)\cap(\Omega\cup \Gamma)\subset
I\cup J$. Hence
\[
\begin{split}
(I+\Delta)\cap\Gamma\,&= (I+\Delta)\cap(\Omega\cup\Gamma)\cap\Gamma\cr
&\subset(I\cup J)\cap \Gamma\cr
&=(I\cap\Gamma)\cup J\cr
&=(J\cap\Omega)\cup J\cr
&=J.\cr
\end{split}
\]
In the same way, $(J+\Delta)\cap\Omega\subset I$.

($\Leftarrow$) We have
\[
(I+\Delta)\cap(\Omega\cup\Gamma)=\bigl[(I+\Delta)\cap\Omega\bigr]\cup\bigl[(I+\Delta)\cap
\Gamma\bigr]\subset I\cup J
\]
since $(I+\Delta)\cap\Omega=I$ and, by (\ref{Iff}), $(I+\Delta)\cap \Gamma\subset J$. In the same way,
$(J+\Delta)\cap(\Omega\cup\Gamma)\subset I\cup J$. Therefore,
\[
\bigl[(I\cup J)+\Delta\bigr]\cap(\Omega\cup\Gamma)\subset I\cup J,
\]
which makes $I\cup J$ an ideal of $\Omega\cup \Gamma$.
\end{proof}

In general, all $A^r$-invariant ideals of ${\mathcal U}$ can be constructed using the following inductive strategy.
Partition ${\mathcal U}$ into $A^r$-invariant subsets ${\mathcal U}_1,\dots,{\mathcal U}_k$. 
Let $1\le i\le k$ and 
assume that for each 
$j$ with $j<i$, an $A^r$-invariant ideal $I_j$ of ${\mathcal U}_j$ has been constructed such that $\bigcup_{j<i}I_j$ is an ideal
of $\bigcup_{j<i}{\mathcal U}_j$. Construct an $A^r$-invariant ideal $I_i$ of ${\mathcal U}_i$ such that for all $j<i$,
\begin{equation}\label{Iff1}
(I_i+\Delta)\cap{\mathcal U}_j\subset I_j\quad\text{and}\quad (I_j+\Delta)\cap{\mathcal U}_i\subset I_i.
\end{equation}
Then by Lemma~\ref{L2.1} (ii), $\bigcup_{j\le i}I_j$ is an $A^r$-invariant ideal of $\bigcup_{j\le i}{\mathcal U}_j$.
Eventually, $I=\bigcup_{j\le k}{\mathcal U}_j$ is an $A^r$-invariant ideal of ${\mathcal U}$ with $I\cap{\mathcal U}_i=I_i$ for all $1\le i\le k$.
We shall call an ideal $I_i$ of ${\mathcal U}_i$ satisfying (\ref{Iff1}) {\em compatible} with $I_j$ ($j<i$).

\vskip2mm

\noindent{\bf Remarks}. (i) Constructing an $A^r$-invariant ideal $I$ in ${\mathcal U}$ is an $e$-dimensional 
geometric problem. By partitioning ${\mathcal U}$
suitably, constructing an $A^r$-invariant ideal $I_i$ in ${\mathcal U}_i$ becomes an $(e-1)$-dimensional geometric problem. 

(ii) Since for each $A^r$-invariant ideal $I$ of ${\mathcal U}$, $I\cap {\mathcal U}_i$ ($1\le i\le k$) is $A^r$-invariant ideal of 
${\mathcal U}_i$, the above strategy does enumerate all $A^r$-invariant ideals of ${\mathcal U}$.

(iii) The existence of an $A^r$-invariant ideal $I_i$ of ${\mathcal U}_i$ compatible with $I_j$ ($j<i$) is guaranteed by
Lemma~\ref{L2.1}. Hence the inductive construction can always be completed.
\vskip2mm

To turn the above strategy into an enumeration algorithm, what we essentially need are effective methods
for enumerating all $A^r$-invariant ideals $I_i$ which are compatible with an existing sequence of
$A^r$-invariant ideals $I_j$ ($j<i$). The main purpose of this paper is to provide such effective methods in
the case $e=3$.

Form now on, we assume $e=3$. Put
\[
n=\frac m3(p-1).
\]
Since $r|e$, there are two possibilities for $r$: $r=1$ or 3. When $r=3$, we partition ${\mathcal U}$ as
\begin{equation}\label{Ptn}
{\mathcal U}=\bigcup_{i=0}^n{\mathcal U}_i
\end{equation}
where
\[
{\mathcal U}_i=\bigl\{(x,y,z)\in{\mathcal U}:z=i\bigr\}.
\]
When $r=1$, we partition ${\mathcal U}$ as
\begin{equation}\label{Ptn1}
{\mathcal U}=\bigcup_{i=0}^n{\mathcal V}_i
\end{equation}
where
\[
{\mathcal V}_i=\bigl\{(x,y,z)\in{\mathcal U}:x\le i,\,y\le i,\, z\le i,\ \text{and at least one of $x,y,z$ is $i$}\bigr\}.
\]

Section 4 deals with the case $r=3$. We describe two methods for enumerating compatible ideals $I_i$ of ${\mathcal U}_i$. The method of forward slicing enumerates all ideals $I_i$ of ${\mathcal U}_i$ 
which are compatible with
ideals $I_j$ of ${\mathcal U}_j$ where $0\le j<i$;
the method of backward slicing enumerates all ideals $I_i$ of ${\mathcal U}_i$ 
which are compatible with
ideals $I_j$ of ${\mathcal U}_j$ where $i< j\le n$.
Section 5 deals with the case $r=1$.
We describe a method for enumerating all $A$-invariant ideals $I_i$ of ${\mathcal V}_i$ compatible with
$A$-invariant ideals $I_j$ of ${\mathcal V}_j$ where $0\le j<i$. In preparation for these attempts, in the next section, we first take a close look of the cross section of an ideal in ${\mathcal U}$ on a plane parallel to a coordinate plane. We also introduce the notion of walk in the next section.

\section{Cross Sections and Walks}

Let $c\in {\Bbb R}$. Observe that $\Delta\cap({\Bbb R}^2\times\{c\})$ consists of 
points $(x,y,c)\in{\Bbb R}^3$ satisfying
\[
\begin{cases}
x+py+p^2c\le 0,\cr
p^2x+y+pc\le 0,\cr
px+p^2y+c\le 0,\cr
\end{cases}
\]
i.e.,
\begin{equation}\label{Eq3.1}
\begin{cases}
x+py\le\min\{-p^2c,\,-\frac 1pc\},\cr
p^2x+y\le -pc.\cr
\end{cases}
\end{equation}
The solution set of (\ref{Eq3.1}) is depicted in Figure 2 when $c\ge 0$ and in
Figure 3 when $c<0$.

\vskip5mm
\setlength{\unitlength}{6mm}
\[
\begin{picture}(10,10)
\put(0,7){\vector(1,0){10}}
\put(5,0){\vector(0,1){10}}

\put(9,3){\line(-2,1){8.5}}
\put(4,9){\line(1,-4){2}}

\put(0,6){\line(1,1){1}}
\put(0,5){\line(1,1){1.65}}
\put(0,4){\line(1,1){2.3}}
\put(0,3){\line(1,1){3}}
\put(0,2){\line(1,1){3.65}}
\put(0,1){\line(1,1){4.3}}
\put(1,1){\line(1,1){4}}
\put(2,1){\line(1,1){3.2}}
\put(3,1){\line(1,1){2.4}}
\put(4,1){\line(1,1){1.6}}
\put(5,1){\line(1,1){0.75}}

\put(10,6.8){\makebox(0,0)[t]{$\scriptstyle x$}}
\put(5.2,10){\makebox(0,0)[l]{$\scriptstyle y$}}
\put(5.1,5.1){\makebox(0,0)[bl]{$\scriptstyle -pc$}}
\put(6,1.5){\makebox(0,0)[l]{$\scriptstyle \text{slope}=-p^2$}}
\put(7,4.2){\makebox(0,0)[bl]{$\scriptstyle \text{slope}=-\frac 1p$}}
\end{picture}
\]
\[
\text{Figure 2. The cross section of $\Delta$ on the plane $z=c$, $c\ge 0$}
\]

\vskip5mm
\setlength{\unitlength}{6mm}
\[
\begin{picture}(10,10)
\put(0,6){\vector(1,0){10}}
\put(5,0){\vector(0,1){10}}

\put(8,5){\line(-2,1){7}}
\put(5.5,8){\line(1,-4){1.75}}

\put(1,7){\line(1,1){1}}
\put(1,6){\line(1,1){1.65}}
\put(1,5){\line(1,1){2.3}}
\put(1,4){\line(1,1){3}}
\put(1,3){\line(1,1){3.65}}
\put(1,2){\line(1,1){4.3}}
\put(1,1){\line(1,1){5}}
\put(2,1){\line(1,1){4.2}}
\put(3,1){\line(1,1){3.4}}
\put(4,1){\line(1,1){2.6}}
\put(5,1){\line(1,1){1.8}}
\put(6,1){\line(1,1){1}}

\put(10,5.8){\makebox(0,0)[t]{$\scriptstyle x$}}
\put(5.2,10){\makebox(0,0)[l]{$\scriptstyle y$}}
\put(6,6.1){\makebox(0,0)[bl]{$\scriptstyle -\frac cp$}}
\put(6,8){\makebox(0,0)[l]{$\scriptstyle \text{slope}=-p^2$}}
\put(2,8){\makebox(0,0)[bl]{$\scriptstyle \text{slope}=-\frac 1p$}}
\end{picture}
\]
\[
\text{Figure 3. The cross section of $\Delta$ on the plane $z=c$, $c< 0$}
\]

\vskip5mm
\setlength{\unitlength}{6mm}
\[
\begin{picture}(10,10)
\put(0,6){\vector(1,0){10}}
\put(6,0){\vector(0,1){10}}

\put(8,5){\line(-2,1){7}}
\put(5.5,8){\line(1,-4){1.75}}

\put(1,7){\line(1,1){1}}
\put(1,6){\line(1,1){1.65}}
\put(1,5){\line(1,1){2.3}}
\put(1,4){\line(1,1){3}}
\put(1,3){\line(1,1){3.65}}
\put(1,2){\line(1,1){4.3}}
\put(1,1){\line(1,1){5}}
\put(2,1){\line(1,1){4.2}}
\put(3,1){\line(1,1){3.4}}
\put(4,1){\line(1,1){2.6}}
\put(5,1){\line(1,1){1.8}}
\put(6,1){\line(1,1){1}}

\put(10,5.8){\makebox(0,0)[t]{$\scriptstyle x$}}
\put(6.2,10){\makebox(0,0)[l]{$\scriptstyle y$}}
\put(7,3){\makebox(0,0)[l]{$\scriptstyle \text{slope}=-p^2$}}
\put(2,8){\makebox(0,0)[bl]{$\scriptstyle \text{slope}=-\frac 1p$}}
\end{picture}
\]
\[
\text{Figure 4. The region $D$}
\]
\vskip4mm

Let 
\[
D=\bigl\{(x,y)\in{\Bbb R}^2: x+py\le 0,\ p^2x+y\le 0\bigr\}
\]
(Figure 4). We can write
\begin{equation}\label{Cro}
\Delta\cap({\Bbb R}^2\times\{c\})=
\begin{cases}
\bigl(D-c(0,p)\bigr)\times\{c\},&\text{if}\ c\ge 0,\cr
\bigl(D-c(\frac 1p,0)\bigr)\times\{c\},&\text{if}\ c< 0.\cr
\end{cases}
\end{equation}
Given $(x_1,y_1,z_1)$ and $(x_2,y_2,z_2)$ in ${\Bbb R}^3$, $(x_1,y_1,z_1)\prec (x_2,y_2,z_2)$ if and
only if $(x_1,y_1,z_1)-(x_2,y_2,z_2)\in\Delta\cap\bigl({\Bbb R}^2\times\{z_1-z_2\}\bigr)$. By (\ref{Cro}), this happens
if and only if
\begin{equation}\label{Iff2}
(x_1,y_1)\in(x_2,y_2)+
\begin{cases}
D-(z_1-z_2)(0,p),&\text{if}\ z_1\ge z_2,\cr
D-(z_1-z_2)(\frac 1p,0),&\text{if}\ z_1< z_2.\cr
\end{cases}
\end{equation}
Thus
\begin{equation}\label{x2y2z2}
\begin{split}
&\bigl[(x_2,y_2,z_2)+\Delta\bigr]\cap\bigl[{\Bbb R}^2\times\{z_1\}\bigr]\cr
=\,&
\begin{cases}
\bigl[(x_2,y_2)+D-(z_1-z_2)(0,p)\bigr]\times\{z_1\},&\text{if}\ z_1\ge z_2,\cr
\bigl[(x_2,y_2)+D-(z_1-z_2)(\frac1p,0)\bigr]\times\{z_1\},&\text{if}\ z_1< z_2.\cr
\end{cases}
\end{split}
\end{equation}
By symmetry, we also see that $(x_1,y_1,z_1)\prec (x_2,y_2,z_2)$ if and only if
\begin{equation}\label{Iff3}
(y_1,z_1)\in(y_2,z_2)+
\begin{cases}
D-(x_1-x_2)(0,p),&\text{if}\ x_1\ge x_2,\cr
D-(x_1-x_2)(\frac 1p,0),&\text{if}\ x_1< x_2,\cr
\end{cases}
\end{equation}
which is equivalent to 
\begin{equation}\label{Iff4}
(z_1,x_1)\in(z_2,x_2)+
\begin{cases}
D-(y_1-y_2)(0,p),&\text{if}\ y_1\ge y_2,\cr
D-(y_1-y_2)(\frac 1p,0),&\text{if}\ y_1< y_2.\cr
\end{cases}
\end{equation}

\begin{lem}\label{L3.1}
Let $c$ be an integer written in the form $c=ap+b$ where $a,b\in{\Bbb Z}$,
$0\le b\le p-1$. Then
\[
\Bigl[c(\frac 1p,0)+D\Bigr]\cap{\Bbb Z}^2=\Bigl[\bigl\{(a,0),\ (a+1,-p^2+pb)\bigr\}+D\Bigr]\cap{\Bbb Z}^2.
\]
\end{lem}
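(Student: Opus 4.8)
The plan is to translate both sides of the asserted identity into explicit systems of linear inequalities over ${\Bbb Z}^2$ and then verify the set equality by a short case analysis according to the position of the lattice point relative to one of the two bounding lines of the cone.

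First I would rewrite the left-hand side. Since $D=\{(x,y)\in{\Bbb R}^2:x+py\le 0,\ p^2x+y\le 0\}$, an integer point $(x,y)$ lies in $c(\frac1p,0)+D$ exactly when $x+py\le c/p$ and $p^2x+y\le pc$. Writing $c=ap+b$ with $0\le b\le p-1$ we have $0\le c/p-a<1$, so (as $x+py\in{\Bbb Z}$) the first inequality is equivalent to $x+py\le a$, while $pc=p^2a+pb$. Hence the left-hand side equals $R\cap{\Bbb Z}^2$ where $R:=\{(x,y):x+py\le a,\ p^2x+y\le p^2a+pb\}$. A direct substitution likewise gives $(a,0)+D=\{(x,y):x+py\le a,\ p^2x+y\le p^2a\}$ and $(a+1,-p^2+pb)+D=\{(x,y):x+py\le a+1-p^3+p^2b,\ p^2x+y\le p^2a+pb\}$, so the right-hand side is the union of the lattice points of these two translates of $D$.

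For the inclusion $\supseteq$: $D$ is a convex cone with $D+D=D$, so the translate $R$ satisfies $R+D=R$; thus it suffices to check that the two apexes lie in $R$. That $(a,0)\in R$ is immediate, and for $(a+1,-p^2+pb)$ one computes $(a+1)+p(-p^2+pb)=a+1-p^3+p^2b\le a$ (using $p^2b\le p^2(p-1)\le p^3-1$) together with $p^2(a+1)+(-p^2+pb)=p^2a+pb$. Intersecting $(a,0)+D\subseteq R$ and $(a+1,-p^2+pb)+D\subseteq R$ with ${\Bbb Z}^2$ yields $\supseteq$.

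The inclusion $\subseteq$ is the main point. Given $(x,y)\in R\cap{\Bbb Z}^2$: if $p^2x+y\le p^2a$ then already $(x,y)\in(a,0)+D$. Otherwise $p^2a<p^2x+y\le p^2a+pb$; put $\epsilon:=x-a$ and $\delta:=p^2x+y-p^2a=p^2\epsilon+y$, so $\epsilon,\delta\in{\Bbb Z}$ with $1\le\delta\le pb$. The inequality $x+py\le a$ reads $py\le-\epsilon$, and combined with $y=\delta-p^2\epsilon\ge 1-p^2\epsilon$ this forces $p\le(p^3-1)\epsilon$, hence $\epsilon\ge 1$. Then $x+py=a+\epsilon+(p\delta-p^3\epsilon)=a+p\delta-(p^3-1)\epsilon\le a+p^2b-(p^3-1)=a+1-p^3+p^2b$, using $\epsilon\ge1$ and $p\delta\le p^2b$; together with $p^2x+y\le p^2a+pb$ this gives $(x,y)\in(a+1,-p^2+pb)+D$. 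The only real obstacle is locating the correct case split and the auxiliary variables $\epsilon,\delta$; everything else is elementary inequality bookkeeping. The guiding picture is that $(a,0)$ and $(a+1,-p^2+pb)$ are the lattice points on the two edges of the cone $R$ nearest its (in general non-lattice) apex $\bigl(a-\tfrac{p^2b}{p^3-1},\ \tfrac{pb}{p^3-1}\bigr)$, which is what suggests both the statement and the argument.
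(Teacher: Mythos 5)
Your proof is correct. The paper's own justification is essentially a picture proof: it names the set difference $\bigl[c(\frac1p,0)+D\bigr]\setminus\bigl[\{(a,0),(a+1,-p^2+pb)\}+D\bigr]$, points at Figure 5, and declares that the sliver it depicts contains no lattice points. You replace that appeal to a diagram with a complete algebraic verification: rewriting membership in each translate of $D$ as a pair of explicit inequalities, using integrality to sharpen $x+py\le c/p$ to $x+py\le a$, establishing $\supseteq$ by checking the two apexes lie in the cone $R$ (plus $D+D=D$), and proving $\subseteq$ by the case split on whether $p^2x+y\le p^2a$, with the auxiliary integers $\epsilon=x-a$ and $\delta=p^2\epsilon+y$ driving the remaining case to $(a+1,-p^2+pb)+D$. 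The inequality chain $p\delta\le p^2b$, $\epsilon\ge 1$, hence $x+py=a+p\delta-(p^3-1)\epsilon\le a+1-p^3+p^2b$ is exactly what makes the "no lattice points in the sliver" assertion rigorous, and it quietly covers the degenerate $b=0$ case (Case~2 is then vacuous). So the underlying geometric idea is the same one the figure is meant to convey, but your argument is a self-contained, verifiable version of it, which is strictly more than the paper provides; the paper's version buys brevity and visual intuition at the cost of rigor.
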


\begin{proof}
Note that 
\[
\Bigl[c(\frac 1p,0)+D\Bigr]\setminus\Bigl[\bigl\{(a,0),\ (a+1,-p^2+pb)\bigr\}+D\Bigr]
\]
is the indicated region in Figure 5. Obvious, this region does not contain any points in ${\Bbb Z}^2$
\end{proof}

\vskip5mm
\setlength{\unitlength}{6mm}
\[
\begin{picture}(14,12)
\put(0,7){\vector(1,0){14}}
\put(2,0){\vector(0,1){12}}

\put(8,7){\line(-2,1){7}}
\put(10,7){\line(-2,1){7}}
\put(11,3){\line(-2,1){4}}
\put(8,7){\line(1,-4){1.5}}
\put(10,7){\line(1,-4){1.75}}

\multiput(8,7)(-0.66,0.33){9}{\line(1,1){0.66}}
\put(8.2,6.3){\line(1,1){1.1}}
\put(8.4,5.5){\line(1,1){1.4}}
\put(8.6,4.7){\line(1,1){1.4}}
\put(9,4){\line(1,1){1.3}}
\put(9.7,3.6){\line(1,1){0.9}}
\put(11,7){\line(0,1){0.1}}

\put(14,6.8){\makebox(0,0)[t]{$\scriptstyle x$}}
\put(2.2,12){\makebox(0,0)[l]{$\scriptstyle y$}}
\put(1.8,6.8){\makebox(0,0)[tr]{$\scriptstyle 0$}}
\put(7.9,6.8){\makebox(0,0)[tr]{$\scriptstyle a$}}
\put(9.5,7.2){\makebox(0,0)[bl]{$\scriptstyle a+\frac bp=\frac cp$}}
\put(11,6.8){\makebox(0,0)[t]{$\scriptstyle a+1$}}
\put(11.2,3){\makebox(0,0)[l]{$\scriptstyle (a+1, -p^2+pb)$}}
\put(4.2,10){\makebox(0,0)[bl]{$\scriptstyle \text{slope}=-\frac 1p$}}
\put(11.6,1){\makebox(0,0)[l]{$\scriptstyle \text{slope}=-p^2$}}

\end{picture}
\]

\[
\text{Figure 5. Proof of Lemma~\ref{L3.1}}
\]
\vskip4mm

The restriction of $\prec$ on the $xy$-plane, still denoted by $\prec$, is defined by the 2-dimensional cone $D$:
$(x_1,y_1)\prec (x_2,y_2)$ if and only if $(x_1,y_1)\prec (x_2,y_2)+D$. It is clear that for $I\subset\Omega\subset
{\Bbb R}^2$ and $c\in{\Bbb R}$, $I\times\{c\}$ is an ideal of $\Omega\times \{c\}$ if and only if $I$ is an ideal of 
$\Omega$.

For integers $a\le b$, let
\[
[a,b]=\{x\in{\Bbb Z}:a\le x\le b\}.
\]
Following the approach in \cite{Hou}, we can characterize ideals of a rectangle in ${\Bbb Z}^2$ by their boundaries. Such boundaries are called walks.

\begin{defn}\label{D3.1}
Let $a\le b$ and $c\le d$ be integers. A {\em walk} in $[a,b]\times [c,d]$ is a sequence
\begin{equation}\label{Wlk}
(x_0,y_0),\ (x_1,y_1),\ \dots,\ (x_k,y_k)
\end{equation}
in $[a,b]\times [c,d]$ satisfying the following conditions.

\begin{enumerate}
\item[(i)] 
$x_0=a$ or $y_0=d$; $x_k=b$ or $y_k=c$.

\item[(ii)] 
For each $0< i\le k$, either $(x_i,y_i)=(x_{i-1}+h,\,y_{i-1})$ for some $1\le h\le p$ or $(x_i,y_i)=(x_{i-1},\,y_{i-1}-v)$ for some $1\le v\le p^2$. In the first case,
$\bigl((x_{i-1},y_{i-1}),\ (x_i,y_i)\bigr)$ is called a {\em horizontal step of length} $h$;
in the second case,
$\bigl((x_{i-1},y_{i-1}),\ (x_i,y_i)\bigr)$ is called a {\em vertical step of length} $v$.

\item[(iii)]
The steps in the sequence (\ref{Wlk}) alternate between horizontal and vertical.

\item[(iv)]
If $a\le x_0<b$ and $y_0=d$, the first step is vertical; if $x_k=b$ and $c\le y_k<d$, the last step in horizontal.

\item[(v)]
If the first step is horizontal of length $h$, then $1\le h\le p-1$; if the last step is vertical of length $v$, then $1\le v\le p^2-1$
\end{enumerate}
\end{defn}

Let $U=[a,b]\times[c,d]$. For each walk $W=\bigl((x_0,y_0),\dots,(x_k,y_k)\bigr)$ in $U$, denote by $\iota_U(W)$ 
the lower left part of $U$ bounded by $W$ (see Figure 6), i.e.,
\[
\iota_U(W)=\bigl\{(x,y)\in U: x\le x_i\ \text{and}\ y\le y_i\ \text{for some}\ 0\le i\le k\bigr\}.
\]

\vskip5mm
\setlength{\unitlength}{5mm}
\[
\begin{picture}(12,12)
\put(0,0){\vector(1,0){12}}
\put(0,0){\vector(0,1){12}}
\put(0,10){\line(1,0){10}}
\put(10,0){\line(0,1){10}}
\put(0,9){\line(1,0){1}}
\put(1,9){\line(0,-1){2}}
\put(1,7){\line(1,0){2}}
\put(3,7){\line(0,-1){4}}
\put(3,3){\line(1,0){2}}
\put(5,3){\line(0,-1){3}}

\put(0,9){\makebox(0,0){$\scriptstyle \bullet$}}
\multiput(1,9)(0,-1){3}{\makebox(0,0){$\scriptstyle \bullet$}}
\put(2,7){\makebox(0,0){$\scriptstyle \bullet$}}
\multiput(3,7)(0,-1){5}{\makebox(0,0){$\scriptstyle \bullet$}}
\put(4,3){\makebox(0,0){$\scriptstyle \bullet$}}
\multiput(5,3)(0,-1){4}{\makebox(0,0){$\scriptstyle \bullet$}}

\put(-0.1,-0.1){\makebox(0,0)[tr]{$\scriptstyle 0$}}
\put(-0.1,10){\makebox(0,0)[r]{$\scriptstyle 10$}}
\put(10,-0.1){\makebox(0,0)[t]{$\scriptstyle 10$}}
\put(-0.1,9){\makebox(0,0)[r]{$\scriptstyle (x_0,y_0)$}}
\put(5,-0.1){\makebox(0,0)[t]{$\scriptstyle (x_6,y_6)$}}
\put(2,2){\makebox(0,0){$\scriptstyle \iota(W)$}}

\multiput(1,0)(1,0){10}{\line(0,1){0.1}}
\multiput(0,1)(0,1){10}{\line(1,0){0.1}}

\end{picture}
\]
\vskip5mm
\[
\text{Figure 6. A walk $W$ in $[0,10]\times[0,10]$ and its corresponding ideal $\iota(W)$, $p=2$}
\]
\vskip3mm

We denote the empty walk in $U$ by $\emptyset$ and define $\iota_U(\emptyset)=\emptyset$. Then 
\[
W\longmapsto \iota_U(W)
\]
is a bijection from the set ${\mathcal W}_U$ of all walks in $U$ to the set
${\mathcal I}_U$ of all ideals of $U$. In fact, the conditions in Definition~\ref{D3.1} are necessary and sufficient to ensure that for
every $u\in\iota_U(W)$, $(u+D)\cap U\subset\iota_U(W)$. The inverse map $\iota_U^{-1}:{\mathcal I}_U
\rightarrow{\mathcal W}_U$ is denoted by $\omega_U$.
When $U$ is clear from the context, $\iota_U$ and $\omega_U$ are 
simply written as $\iota$ and $\omega$. We call a walk $W$ the {\em boundary} of the ideal $\iota(W)$
and $\iota(W)$ the {\em ideal bounded by} $W$. 
We remind the reader that the {\em boundary} here is unrelated to the {\em border} in \cite{Ber96}

For two walks $W_1,\ W_2\in{\mathcal W}_U$, we say that $W_1\le  W_2$ if
$\iota(W_1)\subset\iota(W_2)$, which simply means that $W_1$ is below and 
to the left of $W_2$. The partially ordered set $({\mathcal I}_U,\,\subset)$
is a lattice where ``$\wedge$'' is ``$\cap$'' and ``$\vee$'' is ``$\cup$''.
Consequently, $({\mathcal W}_U,\,\le)$ is also a lattice with
\[
W_1\wedge W_2=\omega\bigl(\iota(W_1)\cap\iota(W_2)\bigr)
\]
and
\[
W_1\vee W_2=\omega\bigl(\iota(W_1)\cup\iota(W_2)\bigr).
\]

We introduce some operations on walks. Let $U_i=[a_i,b_i]\times[c_i,d_i]$
($i=1,2$), where $a_i\le b_i$ and $c_i\le d_i$ are integers, and assume
$U_1\supset U_2$. Let $W$ be a walk in $U_1$ and let $I=\iota_{U_1}(W)$.
The {\em restriction} of $W$ in $U_2$, denoted by $W|_{U_2}$, is defined to be
$\omega_{U_2}(I\cap U_2)$. If $U_2\subset\iota_{U_1}(W)$, $W|_{U_2}$ is the point $(b_2,d_2)$; otherwise, $W|_{U_2}$ is the walk in $U_2$ consisting of steps and partial steps of $W$. (See Figure 7.)

\vskip8mm
\setlength{\unitlength}{6mm}
\[
\begin{picture}(12,10)
\put(0,0){\line(1,0){12}}
\put(0,10){\line(1,0){12}}
\put(0,0){\line(0,1){10}}
\put(12,0){\line(0,1){10}}
\put(3,2){\line(1,0){7}}
\put(3,8){\line(1,0){7}}
\put(3,2){\line(0,1){6}}
\put(10,2){\line(0,1){6}}

\put(3,0){\line(0,1){0.1}}
\put(10,0){\line(0,1){0.1}}
\put(0,2){\line(1,0){0.1}}
\put(0,8){\line(1,0){0.1}}

\put(0.1,-0.1){\makebox(0,0)[t]{$\scriptstyle a_1$}}
\put(3,-0.1){\makebox(0,0)[t]{$\scriptstyle a_2$}}
\put(10,-0.1){\makebox(0,0)[t]{$\scriptstyle b_2$}}
\put(12,-0.1){\makebox(0,0)[t]{$\scriptstyle b_1$}}
\put(-0.1,0.1){\makebox(0,0)[r]{$\scriptstyle c_1$}}
\put(-0.1,2){\makebox(0,0)[r]{$\scriptstyle c_2$}}
\put(-0.1,8){\makebox(0,0)[r]{$\scriptstyle d_2$}}
\put(-0.1,10){\makebox(0,0)[r]{$\scriptstyle d_1$}}

\put(0,9){\line(1,0){1}}
\put(1,9){\line(0,-1){1}}
\put(1,8){\line(1,0){2}}
\put(4,8){\line(0,-1){2}}
\put(4,6){\line(1,0){2}}
\put(6,6){\line(0,-1){1}}
\put(6,5){\line(1,0){2}}
\put(8,5){\line(0,-1){1}}
\put(8,4){\line(1,0){2}}
\put(10,2){\line(1,0){2}}

\multiput(0,9)(1,0){2}{\makebox(0,0){$\scriptstyle \bullet$}}
\multiput(1,8)(1,0){4}{\makebox(0,0){$\scriptstyle \bullet$}}
\put(4,7){\makebox(0,0){$\scriptstyle \bullet$}}
\multiput(4,6)(1,0){3}{\makebox(0,0){$\scriptstyle \bullet$}}
\multiput(6,5)(1,0){3}{\makebox(0,0){$\scriptstyle \bullet$}}
\multiput(8,4)(1,0){3}{\makebox(0,0){$\scriptstyle \bullet$}}
\multiput(10,2)(1,0){3}{\makebox(0,0){$\scriptstyle \bullet$}}
\put(10,3){\makebox(0,0){$\scriptstyle \bullet$}}

\put(0,9){\line(3,1){6}}
\put(4,8){\line(2,1){2}}
\put(10,4){\line(-1,2){2.5}}
\put(12,2){\line(-1,2){4.5}}

\put(6.75,11){\makebox(0,0){$\scriptstyle W$}}
\put(6.75,9){\makebox(0,0){$\scriptstyle W|_{U_2}$}}
\put(1,1){\makebox(0,0){$\scriptstyle U_1$}}
\put(4,3){\makebox(0,0){$\scriptstyle U_2$}}

\end{picture}
\]
\vskip3mm
\[
\text{Figure 7. The restriction of a walk}
\]
\vskip8mm

For $h,\ v\in{\Bbb Z}$, the shift of $W$ by $h$ horizontal units and $v$ vertical units is a walk in $[a_1+h,b_1+h]\times[c_1+v,d_1+v]$ and is denoted by $W+(h,v)$.

Let $Z$ be a walk in $U_2$ and let $J=\iota_{U_2}(Z)$. A walk $W$ in $U_1$
is called an {\em extension} of $Z$ if $W|_{U_2}=Z$. Let $\overline Z_{U_1}$ and
$\underline Z_{U_1}$ be the {\em highest} and {\em lowest} (the largest and lowest with
respect to $\le$) extensions of $Z$ in $U_1$ respectively. Then 
$\overline Z_{U_1}=\omega_{U_1}(K)$ where $K$ is the largest ideal of $U_1$
such that $K\cap U_2=J$ and $\underline Z_{U_1}=\omega_{U_1}(L)$ where $L$ is the smallest ideal of $U_1$
such that $L\cap U_2=J$. In fact, $\underline Z_{U_1}$ is the boundary of 
$(J+D)\cap U_1$. $\overline Z_{U_1}$ can be obtained from $Z$ easily:
If $Z$ is the point $(b_2,d_2)$ (i.e., $J=U_2$), $\overline Z_{U_1}$ is the point $(b_1,d_1)$.
If $Z$ is not the point $(b_2,d_2)$ and $Z\ne\emptyset$, we extend $Z$ to the lower right with steps alternating between horizontal ones of largest
possible lengths and vertical ones of length 1, and to the upper left with steps alternating between vertical ones of largest
possible lengths and horizontal ones of length 1. If $Z=\emptyset$ and $(a_1,b_1)\ne(a_2,b_2)$, we start from the point $\bigl(
\max\{a_2-1, a_1\},\,\max\{b_2-1, b_1\}\bigr)$ and extend to the lower
right and to the upper left as described above. (See Figure 8.) If $Z=\emptyset$ and $(a_1,b_1)=(a_2,b_2)$, then $\overline Z_{U_1}=\emptyset$.  $\underline Z_{U_1}$ is obtained in a similar way. (See Figure 9.)
\vfill\eject

\vskip5mm
\setlength{\unitlength}{4mm}
\hskip5mm
\begin{picture}(11,10)
\put(0,0){\line(1,0){11}}
\put(0,10){\line(1,0){11}}
\put(0,0){\line(0,1){10}}
\put(11,0){\line(0,1){10}}
\put(3,3){\line(1,0){6}}
\put(3,8){\line(1,0){6}}
\put(3,3){\line(0,1){5}}
\put(9,3){\line(0,1){5}}

\put(2,10){\line(0,-1){5}}
\put(2,5){\line(1,0){2}}
\put(4,5){\line(0,-1){1}}
\put(4,4){\line(1,0){3}}
\put(7,4){\line(0,-1){2}}
\put(7,2){\line(1,0){3}}
\put(10,2){\line(0,-1){1}}
\put(10,1){\line(1,0){1}}

\put(8,7){\makebox(0,0){$\scriptstyle U_2$}}
\put(10,9){\makebox(0,0){$\scriptstyle U_1$}}
\multiput(2,10)(0,-1){5}{\makebox(0,0){$\scriptstyle \bullet$}}
\put(2,5){\makebox(0,0){$\scriptstyle \bullet$}}
\put(3,5){\makebox(0,0){$\scriptstyle \bullet$}}
\put(4,5){\makebox(0,0){$\scriptstyle \bullet$}}
\multiput(4,4)(1,0){3}{\makebox(0,0){$\scriptstyle \bullet$}}
\put(7,4){\makebox(0,0){$\scriptstyle \bullet$}}
\put(7,3){\makebox(0,0){$\scriptstyle \bullet$}}
\multiput(7,2)(1,0){3}{\makebox(0,0){$\scriptstyle \bullet$}}
\put(10,2){\makebox(0,0){$\scriptstyle \bullet$}}
\put(10,1){\makebox(0,0){$\scriptstyle \bullet$}}
\put(11,1){\makebox(0,0){$\scriptstyle \bullet$}}
\end{picture}
\hskip2cm
\begin{picture}(11,10)
\put(0,0){\line(1,0){11}}
\put(0,10){\line(1,0){11}}
\put(0,0){\line(0,1){10}}
\put(11,0){\line(0,1){10}}

\put(4,4){\line(1,0){6}}
\put(4,9){\line(1,0){6}}
\put(4,4){\line(0,1){5}}
\put(10,4){\line(0,1){5}}

\put(3,3){\line(1,0){3}}
\put(6,3){\line(0,-1){1}}
\put(6,2){\line(1,0){3}}
\put(9,2){\line(0,-1){1}}
\put(9,1){\line(1,0){2}}
\put(3,3){\line(0,1){7}}

\put(9,8){\makebox(0,0){$\scriptstyle U_2$}}
\put(10.5,9.5){\makebox(0,0){$\scriptstyle U_1$}}
\multiput(3,10)(0,-1){7}{\makebox(0,0){$\scriptstyle \bullet$}}
\multiput(3,3)(1,0){3}{\makebox(0,0){$\scriptstyle \bullet$}}
\put(6,3){\makebox(0,0){$\scriptstyle \bullet$}}
\multiput(6,2)(1,0){3}{\makebox(0,0){$\scriptstyle \bullet$}}
\put(9,2){\makebox(0,0){$\scriptstyle \bullet$}}
\multiput(9,1)(1,0){2}{\makebox(0,0){$\scriptstyle \bullet$}}
\put(11,1){\makebox(0,0){$\scriptstyle \bullet$}}

\end{picture}

\[
\text{Figure 8. Examples of highest extensions, $p=3$}
\]

\vskip5mm
\setlength{\unitlength}{4mm}
\hskip5mm
\begin{picture}(11,10)
\put(0,0){\line(1,0){11}}
\put(0,10){\line(1,0){11}}
\put(0,0){\line(0,1){10}}
\put(11,0){\line(0,1){10}}
\put(2,2){\line(1,0){7}}
\put(2,8){\line(1,0){7}}
\put(2,2){\line(0,1){6}}
\put(9,2){\line(0,1){6}}

\put(0,8){\line(1,0){2}}
\put(3,8){\line(0,-1){1}}
\put(3,7){\line(1,0){1}}
\put(4,7){\line(0,-1){3}}
\put(4,4){\line(1,0){2}}
\put(6,4){\line(0,-1){1}}
\put(6,3){\line(1,0){1}}
\put(7,3){\line(0,-1){3}}

\put(8,7){\makebox(0,0){$\scriptstyle U_2$}}
\put(10,9){\makebox(0,0){$\scriptstyle U_1$}}
\put(0,9){\makebox(0,0){$\scriptstyle \bullet$}}
\multiput(0,8)(1,0){3}{\makebox(0,0){$\scriptstyle \bullet$}}
\put(3,8){\makebox(0,0){$\scriptstyle \bullet$}}
\put(3,7){\makebox(0,0){$\scriptstyle \bullet$}}
\multiput(4,7)(0,-1){3}{\makebox(0,0){$\scriptstyle \bullet$}}
\multiput(4,4)(1,0){2}{\makebox(0,0){$\scriptstyle \bullet$}}
\put(6,4){\makebox(0,0){$\scriptstyle \bullet$}}
\put(6,3){\makebox(0,0){$\scriptstyle \bullet$}}
\put(7,3){\makebox(0,0){$\scriptstyle \bullet$}}
\multiput(7,2)(0,-1){2}{\makebox(0,0){$\scriptstyle \bullet$}}
\put(7,0){\makebox(0,0){$\scriptstyle \bullet$}}
\end{picture}
\hskip2cm
\begin{picture}(11,10)
\put(0,0){\line(1,0){11}}
\put(0,10){\line(1,0){11}}
\put(0,0){\line(0,1){10}}
\put(11,0){\line(0,1){10}}

\put(3,2){\line(1,0){6}}
\put(3,7){\line(1,0){6}}
\put(3,2){\line(0,1){5}}
\put(9,2){\line(0,1){5}}

\put(0,9){\line(1,0){3}}
\put(3,9){\line(0,-1){1}}
\put(3,8){\line(1,0){3}}
\put(6,8){\line(0,-1){1}}
\put(9,2){\line(0,-1){2}}

\put(8,6){\makebox(0,0){$\scriptstyle U_2$}}
\put(10,9){\makebox(0,0){$\scriptstyle U_1$}}
\multiput(0,9)(1,0){3}{\makebox(0,0){$\scriptstyle \bullet$}}
\put(3,9){\makebox(0,0){$\scriptstyle \bullet$}}
\multiput(3,8)(1,0){3}{\makebox(0,0){$\scriptstyle \bullet$}}
\put(6,8){\makebox(0,0){$\scriptstyle \bullet$}}
\multiput(6,7)(1,0){3}{\makebox(0,0){$\scriptstyle \bullet$}}
\multiput(9,7)(0,-1){7}{\makebox(0,0){$\scriptstyle \bullet$}}
\put(9,0){\makebox(0,0){$\scriptstyle \bullet$}}

\end{picture}

\[
\text{Figure 9. Examples of lowest extensions, $p=3$}
\]

\vskip5mm

We list some obvious properties of restrictions and extensions. Let $U_i$, $i=1,2,3$, be rectangles in
${\Bbb Z}^2$ such that $U_1\supset U_2\supset U_3$. Let $W$ be a walk in $U_1$ and $Z$ a walk in $U_3$.
We have
\[
(W|_{U_2})|_{U_3}=W|_{U_3},
\]
\[
\overline{(\overline Z_{U_2})}_{U_1}=\overline Z_{U_1},
\]
\[
\underline{(\underline Z_{U_2})}_{U_1}=\underline Z_{U_1},
\]
\[
(\overline Z_{U_2})|_{U_3}=(\underline Z_{U_2})|_{U_3}=Z.
\]
\vskip8mm

\section{Enumerating Ideals of ${\mathcal U}$}

In this section we assume $r=3$. Since $A^3$ is the identity matrix, $A^3$-invariant ideals of ${\mathcal U}$ are simply ideals of ${\mathcal U}$.
Recall that $n=\frac m3(p-1)$. Put
\[
U=[0,n]^2,
\]
and partition ${\mathcal U}$ as
\[
{\mathcal U}=\bigcup_{i=0}^n\bigl(U\times\{i\}\bigr).
\]
A sequence of ideals $J_0,\dots,J_{i-1}$ (or $J_{i+1},\dots,J_n$) of $U$ is called {\em forward} (respectively, {\em backward}) {\em consistent} if 
$\bigcup_{j=0}^{i-1}(J_j\times\{j\})$ is an ideal of $U\times[0,i-1]$ (respectively, $\bigcup_{j=i+1}^n(J_j\times\{j\})$ is an ideal of $U\times[i+1,n]$).  An ideal $J_i$ of $U$ is said to be {\em consistent} with $J_0,\dots,J_{i-1}$
(or $J_{i+1},\dots,J_n$) if $J_0,\dots,J_{i-1},J_i$ (respectively, $J_i,J_{i+1},\dots,J_n$) is forward (backward) consistent. 

\medskip
\noindent{\bf Note.}
In the terminology of Section 2, the statement that $J_i$ is {\em consistent} with $J_0,\dots,J_{i-1}$ means that $J_i\times\{i\}$ is {\em compatible}
with $J_j\times\{j\}$, $0\le j<i$, with respect to the partition $\mathcal U=\bigcup_{j=0}^n(U\times\{j\})$. The meaning of the statement that 
$J_i$ is {\em consistent} with $J_{i+1},\dots,J_n$ is similar.

\medskip

Given a forward consistent sequence of ideals $J_0,\dots,J_{i-1}$ (or a backward consistent sequence $J_{i+1},\dots,J_n$), our goal in this section is to enumerate all ideals $J_i$ of $U$ which are consistent
with $J_0,\dots,J_{i-1}$ (or $J_{i+1},\dots,J_n$). 
When $n<p$, the problem is trivial:
In this case, the partial order $\prec$ in ${\mathcal U}$ is the cartesian product of linear orders, hence
$J_i$ is consistent with $J_0,\dots,J_{i-1}$ (or $J_{i+1},\dots,J_n$) if and only if
$J_i\subset J_{i-1}$ (or $J_i\supset J_{i+1}$.) When $n\ge p$,
the problem is more complex.
The main result of this section is the determination of two
walks $X_i$ and $Y_i$ in $U$, which can be computed  from the
boundaries of $J_0,\dots,J_{i-1}$ (respectively, the boundaries of $J_{i+1},\dots,J_n$), such that $J_i$ is consistent
with $J_0,\dots,J_{i-1}$ (or $J_{i+1},\dots,J_n$) if and only if $X_i\le \omega(J_i)\le Y_i$.

\begin{lem}\label{L4.1} Let $i$ be an integer with $0\le i\le n$ and let $J_i$ be an ideal of 
$U$. 

\begin{enumerate}
\item[(i)] Let $J_0,\dots,J_{i-1}$ be a forward consistent sequence of ideals of $U$. Then
$J_i$ is consistent with $J_0,\dots,J_{i-1}$ if and only if 
\begin{equation}\label{JiJj}
\bigl[J_j+D-(i-j)(0,p)\bigr]\cap U\subset J_i,\quad 0\le j<i
\end{equation}
and 
\begin{equation}\label{i-ap-b}
\bigl[J_i+D+(a,0)\bigr]\cap U\subset J_{i-ap-b},
\end{equation}
\begin{equation}\label{i-ap-b1}
\bigl[J_i+D+(a+1,-p^2+pb)\bigr]\cap U\subset J_{i-ap-b}
\end{equation}
for all $a,b\in{\Bbb Z}$ with $a\ge 0$, $0\le b\le p-1$, $ap+b\le i$.

\item[(ii)] Let $J_{i+1},\dots,J_n$ be a backward consistent sequence of ideals of $U$. Then
$J_i$ is consistent with $J_{i+1},\dots,J_n$ if and only if 
\begin{equation}\label{JiJj1}
\bigl[J_i+D-(j-i)(0,p)\bigr]\cap U\subset J_j,\quad i<j\le n
\end{equation}
and 
\begin{equation}\label{i+ap+b}
\bigl[J_{i+ap+b}+D+(a,0)\bigr]\cap U\subset J_i,
\end{equation}
\begin{equation}\label{i+ap+b1}
\bigl[J_{i+ap+b}+D+(a+1,-p^2+pb)\bigr]\cap U\subset J_i
\end{equation}
for all $a,b\in{\Bbb Z}$ with $a\ge 0$, $0\le b\le p-1$, $i+ap+b\le n$.
\end{enumerate}
\end{lem}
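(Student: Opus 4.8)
The plan is to reduce the ``consistency'' condition, which by the Note is just the compatibility condition \eqref{Iff1} for the partition $\mathcal U=\bigcup_{j=0}^n(U\times\{j\})$, to the two families of containments \eqref{JiJj}--\eqref{i-ap-b1} by unwinding what $\Delta$ does in each coordinate direction. Recall from \eqref{x2y2z2} that for a point $(x_2,y_2,i)$ with $i\ge j$, the cross section of $(x_2,y_2,i)+\Delta$ on the plane $z=j$ (here $j<i$, so we are going \emph{down} in the $z$-coordinate) is governed by the $c<0$ case of \eqref{Cro}, producing a horizontal shift by $\frac{i-j}{p}$; conversely the cross section on the plane $z=i$ of $(x_1,y_1,j)+\Delta$ with $j<i$ uses the $c\ge 0$ case, giving the vertical shift $-(i-j)(0,p)$ that appears in \eqref{JiJj}.

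First I would handle the easy direction of \eqref{Iff1}, namely $(I_j+\Delta)\cap{\mathcal U}_i\subset I_i$ with ${\mathcal U}_j=U\times\{j\}$ for $j<i$. Writing $I_j=J_j\times\{j\}$ and intersecting with the plane $z=i$, formula \eqref{x2y2z2} (the case $z_1=i\ge z_2=j$, so the $(0,p)$-shift branch) gives exactly $[(J_j\times\{j\})+\Delta]\cap(\mathbb R^2\times\{i\})=\bigl([J_j+D-(i-j)(0,p)]\cap U\bigr)\times\{i\}$ after intersecting with $U\times\{i\}$; hence this half of \eqref{Iff1} is equivalent to \eqref{JiJj}. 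The second half, $(I_i+\Delta)\cap{\mathcal U}_j\subset I_j$ for every $j<i$, is where the real work lies: here we must describe $[(J_i\times\{i\})+\Delta]\cap(\mathbb R^2\times\{j\})$ for $j<i$. By \eqref{x2y2z2} (now $z_1=j<z_2=i$, the $(\frac1p,0)$-branch) this cross section, for a single generator $(x_2,y_2,i)$, is $[(x_2,y_2)+D-(j-i)(\frac1p,0)]\times\{j\}=[(x_2,y_2)+D+\frac{i-j}{p}(1,0)]\times\{j\}$. The horizontal shift $\frac{i-j}{p}$ is generally not an integer, so I would invoke Lemma~\ref{L3.1} with $c=i-j=ap+b$ ($0\le b\le p-1$): it replaces the non-lattice shift $c(\frac1p,0)+D$ by the union of two lattice-shifted copies $\{(a,0),(a+1,-p^2+pb)\}+D$, \emph{as far as lattice points are concerned}. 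Since $J_j$ and $U$ are subsets of $\mathbb Z^2$ and $D$-translates of lattice sets only matter up to lattice points, this converts $[(J_i\times\{i\})+\Delta]\cap(U\times\{j\})\subset J_j\times\{j\}$ into precisely the pair of conditions \eqref{i-ap-b} and \eqref{i-ap-b1}, with $a,b$ determined by $j=i-ap-b$. Ranging $j$ over $0\le j<i$ is the same as ranging over all $a\ge0$, $0\le b\le p-1$ with $ap+b\le i$ (the case $ap+b=i$, i.e.\ $j=0$, being included), which is exactly the index set in the statement.

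Having matched the two halves of \eqref{Iff1} with the displayed containments, the lemma follows from Lemma~\ref{L2.1}(ii) together with the inductive-strategy paragraph: forward consistency of $J_0,\dots,J_{i-1}$ is the hypothesis that $\bigcup_{j<i}(J_j\times\{j\})$ is already an ideal, $I_i\cap{\mathcal U}_j$ automatically equals $I_j\cap{\mathcal U}_i$ in the degenerate sense needed (both sides live in $U\times\{i\}\cap U\times\{j\}=\emptyset$ for $i\ne j$, so the set-matching hypothesis of Lemma~\ref{L2.1}(ii) is vacuous when we add one new slice), and \eqref{Iff1} is then necessary and sufficient for $\bigcup_{j\le i}(J_j\times\{j\})$ to be an ideal. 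Part (ii) is entirely symmetric: one swaps the roles of $i$ and $j$ (now $j>i$), so the vertical-shift condition becomes \eqref{JiJj1} and the horizontal-shift conditions, again cleaned up by Lemma~\ref{L3.1} with $c=ap+b$ where now $j=i+ap+b$, become \eqref{i+ap+b} and \eqref{i+ap+b1}; the same index set $a\ge0$, $0\le b\le p-1$, $i+ap+b\le n$ results.

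The main obstacle is the careful bookkeeping around Lemma~\ref{L3.1}: one has to be sure that (a) the equivalence there is genuinely an equality of intersections with $\mathbb Z^2$, not merely a containment, so that replacing the fractional shift by the two integral generators neither loses nor adds constraints; and (b) the parametrization $j\leftrightarrow(a,b)$ via $i-j=ap+b$ with $0\le b\le p-1$ is a bijection onto the claimed index set, including the boundary index $j=0$. A minor subtlety is that when $n<p$ the two-generator description collapses ($a=0$, $b=i-j$, and the second generator $(1,-p^2+p(i-j))$ contributes nothing new over $U$), recovering the elementary product-order criterion mentioned in the text; it is worth a sentence to confirm the general formulas degenerate correctly. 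Everything else is a direct translation through \eqref{x2y2z2} and \eqref{Cro}, which are already established.
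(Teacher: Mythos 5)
Your proof is correct and follows essentially the same route as the paper: reduce consistency to the two families of slice-wise inclusions via Lemma~\ref{L2.1}(ii), translate each family through \eqref{x2y2z2}, and then replace the fractional horizontal shift $(i-j)(\frac1p,0)$ by the two integral generators using Lemma~\ref{L3.1}. Your explicit bookkeeping on the index-set bijection $j\leftrightarrow(a,b)$ is a fine addition, but it is the same argument as in the paper.
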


\begin{proof}
(i) By Lemma~\ref{L2.1} (ii), $J_i$ is consistent with $J_0,\dots,J_{i-1}$
if and only if for every $0\le j<i$, 
\begin{equation}\label{PfL4.1-1}
\bigl(J_j\times\{j\}+\Delta\bigr)\cap\bigl(U\times\{i\} \bigr)
\subset J_i\times\{i\}
\end{equation}
and
\begin{equation}\label{PfL4.1-2}
\bigl(J_i\times\{i\}+\Delta\bigr)\cap\bigl(U\times\{j\} \bigr)
\subset J_j\times\{j\}.
\end{equation}
However, by (\ref{x2y2z2}), we see that (\ref{PfL4.1-1}) is equivalent to (\ref{JiJj})
and (\ref{PfL4.1-2}) is equivalent to 
\begin{equation}\label{PfL4.1-3}
\bigl[J_i+D+(i-j)(\frac 1p,0)\bigr]\cap U\subset J_j.
\end{equation}
By Lemma~\ref{L3.1}, (\ref{PfL4.1-3}) is equivalent to (\ref{i-ap-b})
and (\ref{i-ap-b1}).

The proof of (ii) is essentially the same.
\end{proof}

\begin{lem}\label{L4.2}
Let $J$ and $K$ be ideals of $U$ with boundaries $W$ and $Z$ respectively. Let $a\ge 0$ and $b\ge 0$ be integers and let $\bar K$ be the largest ideal of $[0,a+n]\times[-b,n]$ such that
$\bar K\cap U=K$. Then the following conditions are equivalent.
\begin{enumerate}
\item[(i)] 
\begin{equation}\label{J+D}
\bigl[J+D+(a,-b)\bigr]\cap U\subset K.
\end{equation}

\item[(ii)] 
\begin{equation}\label{J+a-b}
J+(a,-b)\subset \bar K\cap\bigl((a,-b)+U\bigr).
\end{equation}

\item[(iii)] 
\begin{equation}\label{Kbar}
\bigl[J+D+(a,-b)\bigr]\cap\bigl([0,a+n]\times[-b,n]\bigr)\subset \bar K.
\end{equation}

\item[(iv)] 
\begin{equation}\label{W_under}
\bigl[\underline{(W+(a,-b))}_{[0,a+n]\times[-b,n]}\bigr]\bigm|_U\le Z.
\end{equation}

\item[(v)] 
\begin{equation}\label{W}
W+(a,-b)\le\bigl[\overline Z_{[0,a+n]\times[-b,n]}\bigr]\bigm|_{[a,a+n]\times[-b,-b+n]}.
\end{equation}

\item[(vi)] 
\begin{equation}\label{W-Z}
\underline{(W+(a,-b))}_{[0,a+n]\times[-b,n]}
\le\overline Z_{[0,a+n]\times[-b,n]}.
\end{equation}

\end{enumerate}
\end{lem}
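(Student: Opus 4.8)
The plan is to establish a cycle of implications among the six conditions, exploiting the dictionary between ideals of a rectangle and their boundary walks that was set up in Section~3. The geometric content is that \eqref{J+D} says the ideal generated by the shifted copy $J+(a,-b)$ inside $U$ is contained in $K$; the remaining conditions are five different ways of saying the same thing, phrased either in terms of ideals (in $U$, in the enlarged rectangle $U':=[0,a+n]\times[-b,n]$, or in the shifted rectangle $(a,-b)+U$), or in terms of walks (restrictions, highest/lowest extensions). Throughout I write $U'=[0,a+n]\times[-b,n]$ and note that $U\subset U'$ and $(a,-b)+U\subset U'$, so all the restriction and extension operations from Section~3 apply.

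First I would prove (i)$\Leftrightarrow$(iii). The inclusion $U\subset U'$ gives $\subset$ trivially in one direction; for the other, the point is that $\bar K$ is by definition the largest ideal of $U'$ with $\bar K\cap U=K$, so $\bigl[J+D+(a,-b)\bigr]\cap U'$, being the ideal of $U'$ generated by $J+(a,-b)$, is contained in $\bar K$ as soon as its trace on $U$ lies in $K$; but that trace is exactly the left side of \eqref{J+D}. Next, (iii)$\Leftrightarrow$(ii): the set $\bigl[J+D+(a,-b)\bigr]\cap U'$ is the ideal of $U'$ generated by $J+(a,-b)$, and an ideal of $U'$ contains this generated ideal iff it contains the generating set $J+(a,-b)$ itself; intersecting with $(a,-b)+U$ on the right of \eqref{J+a-b} is harmless because $J+(a,-b)\subset(a,-b)+U$ already. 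Then (iii)$\Leftrightarrow$(vi) is just the translation of the ideal inclusion $\bigl[J+D+(a,-b)\bigr]\cap U'\subset\bar K$ into walk language: the left side is the ideal bounded by $\underline{(W+(a,-b))}_{U'}$ (since $\underline Z_{U_1}$ is the boundary of $(J+D)\cap U_1$, as recalled after Figure~9, applied to the shifted walk $W+(a,-b)$), the right side is the ideal bounded by $\overline Z_{U'}$, and $\iota$ is an order isomorphism $({\mathcal W}_{U'},\le)\to({\mathcal I}_{U'},\subset)$.

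It then remains to connect (vi) with (iv) and (v). For (vi)$\Leftrightarrow$(iv): restrict both walks in \eqref{W-Z} to $U$. On the left, $\bigl[\underline{(W+(a,-b))}_{U'}\bigr]|_U$ is exactly the walk appearing in \eqref{W_under}; on the right, $\overline Z_{U'}\bigm|_U=Z$ by the identity $(\overline Z_{U_2})|_{U_3}=Z$ listed at the end of Section~3 (with $U_2=U'$, $U_3=U$). Since restriction to $U$ is monotone ($W_1\le W_2\Rightarrow W_1|_U\le W_2|_U$, because $\iota(W_1)\subset\iota(W_2)$ forces $\iota(W_1)\cap U\subset\iota(W_2)\cap U$), \eqref{W-Z} implies \eqref{W_under}; conversely, if \eqref{W_under} holds then the ideal generated in $U'$ by $J+(a,-b)$ has trace on $U$ contained in $K$, which is precisely (i), and we already have (i)$\Rightarrow$(vi). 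The equivalence (vi)$\Leftrightarrow$(v) is the mirror image, obtained by restricting to the shifted rectangle $[a,a+n]\times[-b,-b+n]=(a,-b)+U$ instead: on that rectangle $\underline{(W+(a,-b))}_{U'}$ restricts to $W+(a,-b)$ (again by $(\underline Z_{U_2})|_{U_3}=Z$ for the shifted walk), while $\overline Z_{U'}$ restricts to the right side of \eqref{W}; monotonicity of restriction gives (vi)$\Rightarrow$(v), and the reverse follows by extending \eqref{W} back up to $U'$ using $\overline{(\overline Z_{U_2})}_{U_1}=\overline Z_{U_1}$ and the fact that $\overline{(\,\cdot\,)}_{U'}$ is monotone and inverse to restriction on highest extensions.

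The main obstacle I anticipate is bookkeeping rather than conceptual: one must be careful that the shift $W\mapsto W+(a,-b)$ really lands $W$ as a \emph{walk} in the sublattice of $U'$, that $J+(a,-b)$ indeed sits inside $(a,-b)+U\subset U'$ so the generated-ideal argument is legitimate, and that the "highest/lowest extension'' and "restriction'' operations compose as claimed when one of the three nested rectangles is a genuine shift of $U$ rather than a concentric enlargement—the identities at the end of Section~3 are stated for nested rectangles, so I would first note that $(a,-b)+U\subset U'$ and apply them with that choice. Once these compatibilities are pinned down, each individual implication is a one-line appeal to the bijection $\iota$, its monotonicity, and the displayed composition identities; no genuinely new estimate is needed beyond Lemma~\ref{L3.1}, which has already reduced the "$+D+(\tfrac1p,0)$'' shifts in Lemma~\ref{L4.1} to integer shifts of the form treated here.
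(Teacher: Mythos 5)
Your proposal follows the same essential route as the paper's proof: translate the ideal-level inclusion into walk language via the bijection $\iota$, and then move between the rectangles $U$, $(a,-b)+U$, and $[0,a+n]\times[-b,n]$ using the monotone extension/restriction operations $\overline{(\,\cdot\,)}$, $\underline{(\,\cdot\,)}$, $(\,\cdot\,)|$. You organize the cycle of implications differently from the paper --- you do the ideal-level equivalences (i)$\Leftrightarrow$(iii)$\Leftrightarrow$(ii) first and then pass through (vi) to reach (iv) and (v), whereas the paper pairs off (i)$\Leftrightarrow$(iv), (ii)$\Leftrightarrow$(v), (iii)$\Leftrightarrow$(vi) as direct walk/ideal translations and then links (iv), (v), (vi) to each other --- but this is a cosmetic difference; the ingredients and the level of detail are the same.

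There is one genuine slip, in your argument for (v)$\Rightarrow$(vi). You propose to apply the \emph{highest}-extension operator $\overline{(\,\cdot\,)}_{[0,a+n]\times[-b,n]}$ to both sides of \eqref{W} and then invoke $\overline{(\overline Z_{U_2})}_{U_1}=\overline Z_{U_1}$. This does not work: applied to the left side of \eqref{W} it produces $\overline{(W+(a,-b))}_{[0,a+n]\times[-b,n]}$ rather than $\underline{(W+(a,-b))}_{[0,a+n]\times[-b,n]}$, which is the walk that actually appears in \eqref{W-Z} (and is $\le$, not $\ge$, the highest extension, so the inequality runs the wrong way). Moreover, the identity $\overline{(\overline Z_{U_2})}_{U_1}=\overline Z_{U_1}$ requires $Z$ to be a walk in a rectangle \emph{nested inside} $U_2$; here the relevant restriction rectangle is the translate $(a,-b)+U=[a,a+n]\times[-b,-b+n]$, which need not contain $U$, so the identity is not applicable. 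The correct step, which the paper records explicitly, is to apply the \emph{lowest}-extension operator $\underline{(\,\cdot\,)}_{[0,a+n]\times[-b,n]}$ to both sides of \eqref{W}: the left side becomes $\underline{(W+(a,-b))}_{[0,a+n]\times[-b,n]}$ directly, while the right side becomes $\underline{\bigl([\overline Z_{[0,a+n]\times[-b,n]}]|_{[a,a+n]\times[-b,-b+n]}\bigr)}_{[0,a+n]\times[-b,n]}$, which is the lowest extension of a restriction of $\overline Z_{[0,a+n]\times[-b,n]}$ and is therefore $\le\overline Z_{[0,a+n]\times[-b,n]}$. With this replacement the rest of your argument stands.
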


\begin{proof}
Condition (iv) is a restatement of (i) in terms of boundaries. In fact, 
$\bigl[\underline{(W+(a,-b))}_{[0,a+n]\times[-b,n]}\bigr]\bigm|_U$ is the boundary of $\bigl[J+D+(a,-b)\bigr]\cap U$. In the same way, (ii) $\Leftrightarrow$ (v) and (iii) $\Leftrightarrow$ (vi). Condition (vi) follows from (iv) through the operation $\overline{(\ )}_{[0,a+n]\times[-b,n]}$;  condition (iv) follows from (vi) through the operation $(\ )|_U$. 
Similarly, (v) $\Leftrightarrow$ (vi) through operations $\underline{(\ )}_{[0,a+n]\times[-b,n]}$ and $(\ )|_{[a,a+n]\times[-b,-b+n]}$.
\end{proof}

\setlength{\unitlength}{3.5mm}
\[
\begin{picture}(18,17)
\put(0,5){\vector(1,0){18}}
\put(2,0){\vector(0,1){17}}
\put(2,1){\line(1,0){14}}
\put(2,15){\line(1,0){14}}
\put(6,11){\line(1,0){10}}
\put(16,1){\line(0,1){14}}
\put(12,5){\line(0,1){10}}
\put(6,1){\line(0,1){10}}

\put(2,13){\line(1,0){3}}
\put(5,13){\line(0,-1){3}}
\put(5,10){\line(1,0){3}}
\put(8,10){\line(0,-1){2}}
\put(8,8){\line(1,0){2}}
\put(10,8){\line(0,-1){1}}
\put(10,7){\line(1,0){3}}
\put(13,7){\line(0,-1){1}}
\put(13,6){\line(1,0){3}}

\put(2,10){\line(1,0){2}}
\put(4,10){\line(0,-1){1}}
\put(4,9){\line(1,0){3}}
\put(7,9){\line(0,-1){2}}
\put(7,7){\line(1,0){2}}
\put(9,7){\line(0,-1){1}}
\put(9,6){\line(1,0){1}}
\put(10,6){\line(0,-1){2}}
\put(10,4){\line(1,0){2}}
\put(12,4){\line(0,-1){1}}
\put(12,3){\line(1,0){1}}
\put(13,3){\line(0,-1){1}}
\put(13,2){\line(1,0){1}}
\put(14,2){\line(0,-1){1}}

\put(1.9,4.9){\makebox(0,0)[tr]{$\scriptstyle 0$}}
\put(5.9,4.9){\makebox(0,0)[tr]{$\scriptstyle a$}}
\put(12,4.9){\makebox(0,0)[t]{$\scriptstyle n$}}
\put(16.1,4.9){\makebox(0,0)[tl]{$\scriptstyle a+n$}}
\put(1.9,1){\makebox(0,0)[r]{$\scriptstyle -b$}}
\put(1.9,11){\makebox(0,0)[r]{$\scriptstyle -b+n$}}
\put(1.9,15){\makebox(0,0)[r]{$\scriptstyle n$}}

\put(2,13){\makebox(0,0){$\scriptstyle \circ$}}
\put(5,13){\makebox(0,0){$\scriptstyle \circ$}}
\put(5,10){\makebox(0,0){$\scriptstyle \circ$}}
\put(8,10){\makebox(0,0){$\scriptstyle \circ$}}
\put(8,8){\makebox(0,0){$\scriptstyle \circ$}}
\put(10,8){\makebox(0,0){$\scriptstyle \circ$}}
\put(10,7){\makebox(0,0){$\scriptstyle \circ$}}
\put(12,7){\makebox(0,0){$\scriptstyle \circ$}}
\put(13,7){\makebox(0,0){$\scriptstyle \circ$}}
\put(13,6){\makebox(0,0){$\scriptstyle \circ$}}
\put(16,6){\makebox(0,0){$\scriptstyle \circ$}}

\put(2,10){\makebox(0,0){$\scriptstyle \bullet$}}
\put(4,10){\makebox(0,0){$\scriptstyle \bullet$}}
\put(4,9){\makebox(0,0){$\scriptstyle \bullet$}}
\put(6,9){\makebox(0,0){$\scriptstyle \bullet$}}
\put(7,9){\makebox(0,0){$\scriptstyle \bullet$}}
\put(7,7){\makebox(0,0){$\scriptstyle \bullet$}}
\put(9,7){\makebox(0,0){$\scriptstyle \bullet$}}
\put(9,6){\makebox(0,0){$\scriptstyle \bullet$}}
\put(10,6){\makebox(0,0){$\scriptstyle \bullet$}}
\put(10,4){\makebox(0,0){$\scriptstyle \bullet$}}
\put(12,4){\makebox(0,0){$\scriptstyle \bullet$}}
\put(12,3){\makebox(0,0){$\scriptstyle \bullet$}}
\put(13,3){\makebox(0,0){$\scriptstyle \bullet$}}
\put(13,2){\makebox(0,0){$\scriptstyle \bullet$}}
\put(14,2){\makebox(0,0){$\scriptstyle \bullet$}}
\put(14,1){\makebox(0,0){$\scriptstyle \bullet$}}

\put(1,-2){\makebox(0,0){$\scriptstyle \circ$}}
\put(2,-2){\makebox(0,0){$\scriptstyle \circ$}}
\put(1,-2){\line(1,0){1}}
\put(3,-2){\makebox(0,0)[l]{$\scriptstyle :\ \omega(\bar K)$}}

\put(1,-3){\makebox(0,0){$\scriptstyle \bullet$}}
\put(2,-3){\makebox(0,0){$\scriptstyle \bullet$}}
\put(1,-3){\line(1,0){1}}
\put(3,-3){\makebox(0,0)[l]{$\scriptstyle :\ \omega\bigl([J+D+(a,-b)]\cap([0,a+n]\times[-b,n])\bigr)$}}

\end{picture}
\]
\vskip1cm
\[
\text{Figure 10. Illustration of Lemma~\ref{L4.2}}
\]
\vskip3mm

\begin{lem}\label{L4.3}
Let $J, K, L$ be ideals of $U$ and let $b,c$ be positive integers. If
\begin{equation}\label{Eq4.9}
\bigl[J+D+(0,-b)\bigr]\cap U\subset K
\end{equation}
and
\begin{equation}\label{Eq4.10}
\bigl[K+D+(0,-c)\bigr]\cap U\subset L,
\end{equation}
then
\begin{equation}\label{Eq4.11}
\bigl[J+D+(0,-b-c)\bigr]\cap U\subset L
\end{equation}
\end{lem}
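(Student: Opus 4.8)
The plan is to factor the containment relation through the operation $\Phi_t(X):=(X+D+(0,-t))\cap U$, defined for subsets $X\subset U$ and integers $t\ge 0$, and to establish the semigroup-type inclusion $\Phi_{b+c}(J)\subseteq\Phi_c(\Phi_b(J))$ for every ideal $J$ of $U$ and all positive integers $b,c$. Granting this, the result follows quickly: hypothesis $(\ref{Eq4.9})$ says exactly $\Phi_b(J)\subseteq K$, and since $\Phi_c$ is visibly monotone (if $X_1\subseteq X_2$ then $X_1+D+(0,-c)\subseteq X_2+D+(0,-c)$, then intersect with $U$), we get $\Phi_{b+c}(J)\subseteq\Phi_c(\Phi_b(J))\subseteq\Phi_c(K)$, and $\Phi_c(K)\subseteq L$ is hypothesis $(\ref{Eq4.10})$. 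This chain is precisely $(\ref{Eq4.11})$.

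To prove $\Phi_{b+c}(J)\subseteq\Phi_c(\Phi_b(J))$, I would argue pointwise. Fix $u\in\Phi_{b+c}(J)$, so $u\in U$ and $u-w+(0,b+c)\in D$ for some $w\in J$. It suffices to produce an intermediate point $v\in U$ with $v-w+(0,b)\in D$ and $u-v+(0,c)\in D$: the first relation and $w\in J$ give $v\in(J+D+(0,-b))\cap U=\Phi_b(J)$, the second gives $u\in v+D+(0,-c)$, and then $u\in U$ yields $u\in\Phi_c(\Phi_b(J))$. There are two natural candidates. Taking $v=u+(0,c)$ makes $u-v+(0,c)=0\in D$ and $v-w+(0,b)=u-w+(0,b+c)\in D$; taking $v=w-(0,b)$ makes $v-w+(0,b)=0\in D$ and $u-v+(0,c)=u-w+(0,b+c)\in D$. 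In both cases the two required $D$-memberships are immediate (using $0\in D$ and the hypothesis on $u,w$), so the entire content of the argument is the claim that at least one of these two candidates lies in $U=[0,n]^2$.

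Now $v=u+(0,c)$ lies in $U$ precisely when $u_2+c\le n$ (the first coordinate $u_1$ and the bound $u_2+c\ge 0$ being automatic), and $v=w-(0,b)$ lies in $U$ precisely when $w_2-b\ge 0$ (the first coordinate $w_1$ and the bound $w_2-b\le w_2\le n$ being automatic). So the crux — and the only step that uses anything beyond formal manipulation — is to show that $u_2+c>n$ forces $w_2-b\ge 0$. Here the geometry of $D$ and the size $n$ of $U$ enter: the first defining inequality of $u-w+(0,b+c)\in D$ is $(u_1-w_1)+p(u_2-w_2+b+c)\le 0$, i.e. $p(u_2-w_2+b+c)\le w_1-u_1\le n$; if $u_2+c>n$ and $w_2-b<0$ then, by integrality, $u_2+c\ge n+1$ and $w_2\le b-1$, so $u_2-w_2+b+c\ge n+2$, giving $p(n+2)\le n$, which contradicts $p\ge 2$ (and $n\ge 1$, which always holds in our setting; if one wished to allow $n=0$ the statement would be vacuous since then $\Phi_{b+c}(J)=\emptyset$). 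This single inequality is the main obstacle; once it is in place, the choice of $v$ above and the verifications sketched in the previous paragraph complete the proof.
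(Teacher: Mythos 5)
Your proof is correct, and it takes a genuinely different route from the paper's. The paper proves Lemma~\ref{L4.3} using the walk machinery of Lemma~\ref{L4.2}: it forms largest ideals $\bar K$, $\bar L$, $\hat L$ in various shifted strips, establishes $\hat L=\bar L\cap([0,n]\times[-b-c,-c+n])$ by arguing that the ``highest extension'' algorithm produces the same new steps in either direction, and concludes $\bar K\subset\bar L$ by maximality. Your argument is purely set-theoretic and pointwise: you show the monoid-type containment $\Phi_{b+c}(J)\subseteq\Phi_c(\Phi_b(J))$ directly, by exhibiting for each $u\in\Phi_{b+c}(J)$ with witness $w\in J$ an intermediate point $v$ (either $u+(0,c)$ or $w-(0,b)$), and the only nontrivial step is the integrality argument that at least one of these two candidates stays in $U$, using the first defining inequality of $D$ together with $p\ge 2$. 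This is shorter and more elementary than the paper's argument, and it makes transparent why vertical-only shifts are special. It is worth noting, however, that your approach does not extend to the more general Lemma~\ref{L4.4}, where the shifts are $(a,-b)$ and $(c,-d)$: there the two candidate intermediate points $u-(c,-d)$ and $w+(a,-b)$ can both leave $U$ simultaneously, and indeed the paper exhibits counterexamples (Figures 13--15) showing the compositionality fails without extra hypotheses on $K$. So the paper's heavier walk-based argument earns its keep one lemma later; your lighter argument cleanly isolates what makes Lemma~\ref{L4.3} easier.
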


\begin{proof}
Let $\bar K$ be the largest ideal of $[0,n]\times[-b-c,-c+n]$ such that 
\begin{equation}\label{Eq4.12}
\bar K\cap\bigl([0,n]\times[-c,-c+n]\bigr)=K+(0,-c).
\end{equation}
Then by (\ref{Eq4.9}) and Lemma~\ref{L4.2}, 
\begin{equation}\label{Eq4.13}
J+(0,-b-c)\subset\bar K\cap\bigl([0,n]\times[-b-c,-b-c+n]\bigr).
\end{equation}
Let $\bar L$ be the largest ideal of $[0,n]\times[-b-c,n]$ such that $\bar L\cap U=L$.
Put $\tilde L=\bar L\cap\bigl([0,n]\times[-c,n]\bigr)$. Clearly, $\tilde L$ is the largest ideal
of $[0,n]\times[-c,n]$ such that $\tilde L\cap U=L$. Thus by (\ref{Eq4.10}) and Lemma~\ref{L4.2}, 
\begin{equation}\label{Eq4.14}
K+(0,-c)\subset\tilde L\cap\bigl([0,n]\times[-c,-c+n]\bigr)=\bar L\cap\bigl([0,n]\times[-c,-c+n]\bigr).
\end{equation}

Let $\hat L$ be the largest ideal of $[0,n]\times[-b-c,-c+n]$ such that 
\begin{equation}\label{Eq4.15}
\hat L\cap\bigl([0,n]\times[-c,-c+n]\bigr)=\bar L\cap\bigl([0,n]\times[-c,-c+n]\bigr).
\end{equation}
We claim that
\begin{equation}\label{Eq4.16}
\hat L=\bar L\cap\bigl([0,n]\times[-b-c,-c+n]\bigr).
\end{equation}
In fact, $\omega(\bar L)$ is the highest extension of $\omega\bigl(\bar L\cap([0,n]\times[-c,n])\bigr)$;
$\omega(\hat L)$ is the highest extension of $\omega\bigl(\bar L\cap([0,n]\times[-c,-c+n])\bigr)$.
Since both extensions follow the same rules (described in the last paragraph of Section 3), the new steps (in 
$[0,n]\times[-b-c,-c]$) 
in both extensions are identical. Therefore (\ref{Eq4.16}) is proved.

Note that $\bar K$ is an ideal of $[0,n]\times[-b-c,-c+n]$ and that by (\ref{Eq4.12}) and (\ref{Eq4.14}),
\[
\bar K\cap\bigl([0,n]\times[-c,-c+n]\bigr)\subset\bar L\cap\bigl([0,n]\times[-c,-c+n]\bigr).
\]
By the maximality of $\hat L$, we have $\bar K\subset \hat L$.
However, (\ref{Eq4.16}) implies that $\hat L\subset\bar L$. Thus we have $\bar K\subset \bar L$.
Hence by (\ref{Eq4.13}), we have
\[
J+(0,-b-c)\subset\bar L\cap\bigl([0,n]\times[-b-c,-b-c+n]\bigr),
\]
which, by Lemma~\ref{L4.2}, implies (\ref{Eq4.11}).
\end{proof}

\setlength{\unitlength}{4mm}
\[
\begin{picture}(24,19)
\put(0,7){\vector(1,0){14}}
\put(2,0){\vector(0,1){19}}

\put(2,1){\line(1,0){10}}
\put(2,3){\line(1,0){10}}
\put(2,11){\line(1,0){10}}
\put(2,13){\line(1,0){10}}
\put(2,17){\line(1,0){10}}
\put(12,1){\line(0,1){16}}

\put(2,15){\line(1,0){2}}
\put(4,15){\line(0,-1){1}}
\put(4,14){\line(1,0){2}}
\put(6,14){\line(0,-1){3}}
\put(8,11){\line(0,-1){3}}
\put(8,8){\line(1,0){1}}
\put(9,8){\line(0,-1){2}}
\put(9,6){\line(1,0){3}}

\put(3,13){\line(0,-1){1}}
\put(3,12){\line(1,0){2}}
\put(5,12){\line(0,-1){2}}
\put(5,10){\line(1,0){2}}
\put(7,10){\line(0,-1){4}}
\put(7,6){\line(1,0){1}}
\put(8,6){\line(0,-1){3}}
\put(8,3){\line(1,0){1}}
\put(9,3){\line(0,-1){1}}
\put(9,2){\line(1,0){3}}

\put(4,11){\line(0,-1){2}}
\put(4,9){\line(1,0){1}}
\put(5,9){\line(0,-1){1}}
\put(5,8){\line(1,0){1}}
\put(6,8){\line(0,-1){2}}
\put(6,6){\line(1,0){1}}
\put(7,6){\line(0,-1){4}}
\put(7,2){\line(1,0){1}}
\put(8,2){\line(0,-1){1}}

\put(2,15){\makebox(0,0){$\scriptstyle \times$}}
\put(4,15){\makebox(0,0){$\scriptstyle \times$}}
\put(4,14){\makebox(0,0){$\scriptstyle \times$}}
\put(6,14){\makebox(0,0){$\scriptstyle \times$}}
\put(6,11){\makebox(0,0){$\scriptstyle \times$}}
\put(8,11){\makebox(0,0){$\scriptstyle \times$}}
\put(8,8){\makebox(0,0){$\scriptstyle \times$}}
\put(9,8){\makebox(0,0){$\scriptstyle \times$}}
\put(9,7){\makebox(0,0){$\scriptstyle \times$}}
\put(9,6){\makebox(0,0){$\scriptstyle \times$}}
\put(12,6){\makebox(0,0){$\scriptstyle \times$}}

\put(3,13){\makebox(0,0){$\scriptstyle \circ$}}
\put(3,12){\makebox(0,0){$\scriptstyle \circ$}}
\put(5,12){\makebox(0,0){$\scriptstyle \circ$}}
\put(5,10){\makebox(0,0){$\scriptstyle \circ$}}
\put(7,10){\makebox(0,0){$\scriptstyle \circ$}}
\put(7,6){\makebox(0,0){{\large $ \circ$}}}
\put(8,6){\makebox(0,0){$\scriptstyle \circ$}}
\put(8,3){\makebox(0,0){$\scriptstyle \circ$}}
\put(9,3){\makebox(0,0){$\scriptstyle \circ$}}
\put(9,2){\makebox(0,0){$\scriptstyle \circ$}}
\put(12,2){\makebox(0,0){$\scriptstyle \circ$}}

\put(4,11){\makebox(0,0){$\scriptstyle \bullet$}}
\put(4,9){\makebox(0,0){$\scriptstyle \bullet$}}
\put(5,9){\makebox(0,0){$\scriptstyle \bullet$}}
\put(5,8){\makebox(0,0){$\scriptstyle \bullet$}}
\put(6,8){\makebox(0,0){$\scriptstyle \bullet$}}
\put(6,6){\makebox(0,0){$\scriptstyle \bullet$}}
\put(7,6){\makebox(0,0){$\scriptscriptstyle \bullet$}}
\put(7,2){\makebox(0,0){$\scriptstyle \bullet$}}
\put(8,2){\makebox(0,0){$\scriptstyle \bullet$}}
\put(8,1){\makebox(0,0){$\scriptstyle \bullet$}}
\put(10,1){\makebox(0,0){$\scriptstyle \bullet$}}

\put(14,16){\makebox(0,0){$\scriptstyle \bullet$}}
\put(15,16){\makebox(0,0){$\scriptstyle \bullet$}}
\put(14,16){\line(1,0){1}}
\put(16,16){\makebox(0,0)[l]{$\scriptstyle :\ \omega(J+(0,-b-c))$}}

\put(14,15){\makebox(0,0){$\scriptstyle \circ$}}
\put(15,15){\makebox(0,0){$\scriptstyle \circ$}}
\put(14,15){\line(1,0){1}}
\put(16,15){\makebox(0,0)[l]{$\scriptstyle :\ \omega(\bar K)$}}

\put(14,14){\makebox(0,0){$\scriptstyle \times$}}
\put(15,14){\makebox(0,0){$\scriptstyle \times$}}
\put(14,14){\line(1,0){1}}
\put(16,14){\makebox(0,0)[l]{$\scriptstyle :\ \omega(\bar L)$}}

\put(1.9,1){\makebox(0,0)[r]{$\scriptstyle -b-c$}}
\put(1.9,3){\makebox(0,0)[r]{$\scriptstyle -c$}}
\put(1.9,6.9){\makebox(0,0)[tr]{$\scriptstyle 0$}}
\put(1.9,11){\makebox(0,0)[r]{$\scriptstyle -b-c+n$}}
\put(1.9,13){\makebox(0,0)[r]{$\scriptstyle -c+n$}}
\put(1.9,17){\makebox(0,0)[r]{$\scriptstyle n$}}
\put(12.1,7.1){\makebox(0,0)[bl]{$\scriptstyle n$}}

\end{picture}
\]
\[
\text{Figure 11. Illustration of Lemma~\ref{L4.3}}
\]
\vskip3mm

\begin{lem}\label{L4.4}
Let $J, K, L$ be ideals of $U$ and let $a, b,c,d$ be nonnegative integers. Assume that
\begin{equation}\label{Eq4.20}
\bigl[J+D+(a,-b)\bigr]\cap U\subset K
\end{equation}
and
\begin{equation}\label{Eq4.21}
\bigl[K+D+(c,-d)\bigr]\cap U\subset L.
\end{equation}
Furthermore, assume that $K\ne\emptyset$, $K\ne U$ and that $\omega(K)$ is not a single horizontal step.
(Note that when $n\ge p$, $\omega(K)$ is never a single horizontal step.) Then we have
\begin{equation}\label{Eq4.22}
\bigl[J+D+(a+c,-b-d)\bigr]\cap U\subset L.
\end{equation}
\end{lem}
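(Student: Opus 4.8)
The plan is to mimic, step by step, the proof of Lemma~\ref{L4.3}, with the single vertical shift there replaced by the two general shifts $(a,-b)$ and $(c,-d)$, and with all the geometry converted into one inclusion between ``capped'' ideals by repeated application of Lemma~\ref{L4.2}. Throughout I write $U=[0,n]^2$, $T=[c,a+c+n]\times[-b-d,n-d]$ and $R=[0,a+c+n]\times[-b-d,n]$.

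First I would carry out the reduction. Applying Lemma~\ref{L4.2} to (\ref{Eq4.20}) with the pair $(a,b)$ shows that, with $\bar K'$ the largest ideal of $[0,a+n]\times[-b,n]$ satisfying $\bar K'\cap U=K$, one has $J+(a,-b)\subseteq\bar K'\cap\bigl((a,-b)+U\bigr)$; translating by $(c,-d)$ gives $J+(a+c,-b-d)\subseteq\bar K$, where $\bar K:=\bar K'+(c,-d)$ is the largest ideal of $T$ whose restriction to $(c,-d)+U$ is $K+(c,-d)$. Applying Lemma~\ref{L4.2} to (\ref{Eq4.21}) with the pair $(c,d)$ gives $K+(c,-d)\subseteq\bar L'$, where $\bar L'$ is the largest ideal of $[0,c+n]\times[-d,n]$ with $\bar L'\cap U=L$. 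Finally, letting $\bar L$ be the largest ideal of $R$ with $\bar L\cap U=L$, Lemma~\ref{L4.2} reduces the target (\ref{Eq4.22}) to $J+(a+c,-b-d)\subseteq\bar L\cap\bigl((a+c,-b-d)+U\bigr)$; since $J+(a+c,-b-d)\subseteq(a+c,-b-d)+U$ automatically, it suffices to prove the single inclusion $\bar K\subseteq\bar L$.

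Next, exactly as in the proof of Lemma~\ref{L4.3}, I would interpose an auxiliary ideal. From $U\subseteq[0,c+n]\times[-d,n]\subseteq R$ together with the transitivity identities $\overline{(\overline Z_{U_2})}_{U_1}=\overline Z_{U_1}$ and $(\overline Z_{U_1})|_{U_2}=Z$ of Section~3 one gets $\bar L|_{[0,c+n]\times[-d,n]}=\bar L'$, hence $\bar L|_{(c,-d)+U}=\bar L'\cap\bigl((c,-d)+U\bigr)\supseteq K+(c,-d)$. Put $\widehat L:=\overline{\bigl(\bar L|_{(c,-d)+U}\bigr)}_{T}$. Since $\bar K$ and $\widehat L$ are the highest extensions into $T$ of the ideals $K+(c,-d)\subseteq\bar L|_{(c,-d)+U}$ of $(c,-d)+U$, monotonicity of the highest-extension operation (the same principle invoked at the very end of the proof of Lemma~\ref{L4.3}) gives $\bar K\subseteq\widehat L$. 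It then remains to prove $\widehat L=\bar L\cap T$, for then $\bar K\subseteq\widehat L=\bar L\cap T\subseteq\bar L$; the inclusion $\widehat L\supseteq\bar L\cap T$ holds by maximality of $\widehat L$, so the content is $\widehat L\subseteq\bar L\cap T$.

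This last equality is the analogue of the step ``$\widehat L=\bar L\cap([0,n]\times[-b-c,-c+n])$'' in the proof of Lemma~\ref{L4.3}, and I expect it to be the main obstacle. Both $\omega(\widehat L)$ and $\omega(\bar L\cap T)$ arise from $\omega(\bar L)$ by first clipping to the rectangle $(c,-d)+U$ and then extending back into $T$; but now $T$ enlarges $(c,-d)+U$ both to the right (from $x=c+n$ to $x=a+c+n$) and downward (from $y=-d$ to $y=-b-d$), so one must check that the highest-extension rules — alternating horizontal steps of maximal length with vertical steps of length $1$ — reproduce verbatim the part of $\omega(\bar L)$ lying to the right of, or below, $(c,-d)+U$. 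As in Lemma~\ref{L4.3} this rests on the fact that those rules are deterministic and local, so that two walks agreeing with $\omega(\bar L)$ at the right and bottom edges of $(c,-d)+U$ continue to agree throughout $T$. The delicate point, and the place where the hypotheses $K\ne\emptyset$, $K\ne U$, and ``$\omega(K)$ is not a single horizontal step'' are genuinely needed, is that this local description of the extension degenerates precisely when the relevant clipped walk is empty, a single point, or a single horizontal step: in those cases the highest extension into $T$ is governed by the corners of the rectangle rather than by the walk, and need no longer match $\omega(\bar L)$ after the horizontal translation by $c$. Under (\ref{Eq4.20}) and (\ref{Eq4.21}) the stated assumptions on $K$ keep $\omega(K)$ — and hence the derived walks entering the argument — a genuine staircase, and (as the parenthetical remark records) these degenerate shapes cannot occur at all when $n\ge p$. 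Once this degenerate-case bookkeeping is dispatched, the local rules apply, $\widehat L=\bar L\cap T$ follows, and with it $\bar K\subseteq\bar L$ and (\ref{Eq4.22}).
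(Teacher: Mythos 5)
The chain $\bar K \subseteq \widehat L \subseteq \bar L$ has a genuine gap in the second link, and the hypotheses on $K$ cannot close it: the ideal $\widehat L = \overline{(\bar L|_{(c,-d)+U})}_T$ is determined entirely by $L$ and the shifts, with no reference to $K$ whatsoever. Since $\bar K\subseteq\widehat L$ does follow from (\ref{Eq4.21}) by monotonicity of highest extensions, if $\widehat L\subseteq\bar L$ were always true your argument would prove the lemma without any non-degeneracy assumption on $K$ --- yet Figures 13--15 exhibit counterexamples precisely when those assumptions fail. Tracing any such counterexample through your own reduction, (\ref{Eq4.20}) and (\ref{Eq4.21}) hold, so $J+(a+c,-b-d)\subseteq\bar K\subseteq\widehat L$, while (\ref{Eq4.22}) fails; hence $\widehat L\not\subseteq\bar L$ there. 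So the inclusion $\widehat L\subseteq\bar L$ (equivalently $\widehat L=\bar L\cap T$) is not merely delicate but false in general, and the passage from Lemma~\ref{L4.3} to Lemma~\ref{L4.4} --- from purely vertical shifts to mixed shifts --- is exactly where the analogue of the step ``$\hat L=\bar L\cap([0,n]\times[-b-c,-c+n])$'' breaks down.

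The paper's proof avoids this by not seeking a single $L$-side intermediate ideal at all. It interposes $\hat K$, the \emph{smallest} ideal of $[0,c+n]\times[-d,n]$ restricting to $K+(c,-d)$, notes $\hat K\subseteq\tilde L$ by minimality together with (\ref{Eq4.21}), and then glues $\omega(\hat K)$ (lowest extension, to the upper left) and $\omega(\bar K)$ (highest extension, to the lower right) into one walk $W$. The hypotheses on $K$ enter precisely to guarantee that this gluing produces a valid walk: when $K+(c,-d)$ is $\emptyset$, the whole square, or bounded by a single horizontal step, the two pieces do not fit together. Given $W$, the computation $\iota(W)\cap U=\hat K\cap U\subseteq\tilde L\cap U=L$ and the maximality of $\bar L$ yield $\iota(W)\subseteq\bar L$, hence $\bar K$ restricted to $(a+c,-b-d)+U$ lies in $\bar L$, which suffices for (\ref{Eq4.22}). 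The idea your proposal misses is to work with the \emph{smallest} extension on the $K$-side, so that the non-degeneracy of $K$ has something to act on, rather than a largest extension on the $L$-side, where it is inert.
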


\begin{proof} 
Let $\bar L$ be the largest ideal of $[0,a+c+n]\times[-b-d,n]$ such that $\bar L\cap U= L$ and let $\bar K$ be the largest ideal of $[c,a+c+n]\times[-b-d,-d+n]$ such that
\[
\bar K\cap\bigl([c,c+n]\times[-d,-d+n]\bigr)= K+(c,-d).
\]
By (\ref{Eq4.20}) and Lemma~\ref{L4.2},
\begin{equation}\label{Eq4.23}
J+(a+c,-b-d)\subset\bar K\cap\bigl([a+c,a+c+n]\times[-b-d,-b-d+n]\bigr).
\end{equation}
Put $\tilde L=\bar L\cap\bigl([0,c+n]\times[-d,n]\bigr)$. Clearly, $\tilde L$ is the largest ideal of $[0,c+n]\times[-d,n]$ such that 
\begin{equation}\label{Ltilde}
\tilde L\cap U= L.
\end{equation}
By (\ref{Eq4.21}) and Lemma~\ref{L4.2},
\begin{equation}\label{Eq4.24}
K+(c,-d)\subset\tilde L\cap\bigl([c,c+n]\times[-d,-d+n]\bigr).
\end{equation}

Let $\hat K$ be the smallest ideal of $[0,c+n]\times[-d,n]$ such that
\[
\hat K\cap\bigl([c,c+n]\times[-d,-d+n]\bigr)=K+(c,-d).
\]
By (\ref{Eq4.24}) and the minimality of $\hat K$, we have
\begin{equation}\label{Eq4.25}
\hat K\subset \tilde L.
\end{equation}
The walk $\omega(\hat K)$ is an extension of $\omega(K+(c,-d))$ to the upper left; the walk $\omega(\bar K)$ is an extension of $\omega(K+(c,-d))$ to the lower right. (See Figure 12.) Since $K+(c,-d)\ne\emptyset$,
$K+(c,-d)\ne[c,c+n]\times[-d,-d+n]$, and since $\omega\bigl(K+(c,-d)\bigr)$
is not a single horizontal step, the union (in the obvious sense) of the
walks $\omega(\hat K)$ and $\omega(\bar K)$ is a walk in $[0,a+c+n]\times[-b-d,-b-d+n]$. Denote this walk by $W$. Note that
\[
\begin{array}{rll}
\iota(W)\cap U\kern-2mm&=\hat K\cap U\cr
&\subset\tilde L\cap U&\text{(by (\ref{Eq4.25}))}\cr
&= L&\text{(by (\ref{Ltilde}))}.\cr
\end{array}
\]
Thus by the maximality of $\bar L$, we have $\iota(W)\subset\bar L$. Hence
\[
\begin{split}
&J+(a+c,-b-d)\cr
\subset\,&\bar K\cap\bigl([a+c,\,a+c+n]\times[-b-d,-b-d+n]\bigr)\quad\quad\text{(by (\ref{Eq4.23}))}\cr
=\,&\iota(W)\cap\bigl([a+c,\,a+c+n]\times[-b-d,-b-d+n]\bigr)\cr
\subset\,& \bar L\cap \bigl([a+c,\,a+c+n]\times[-b-d,-b-d+n]\bigr).\cr
\end{split}
\]
By Lemma~\ref{L4.2}, (\ref{Eq4.22}) follows.
\end{proof}

\vskip3mm

\noindent{\bf Remark}. If $K=\emptyset$ or $K=U$, or $\omega(K)$ is a single horizontal step, the conclusion in
Lemma~\ref{L4.4} may not be true. Counterexamples are given in Figures
13 -- 15.

\setlength{\unitlength}{4mm}
\[
\begin{picture}(22,18)
\put(0,6){\vector(1,0){22}}
\put(2,0){\vector(0,1){18}}

\put(1.8,16){\makebox(0,0)[r]{$\scriptstyle n$}}
\put(1.8,13){\makebox(0,0)[r]{$\scriptstyle -d+n$}}
\put(1.8,3){\makebox(0,0)[r]{$\scriptstyle -d$}}
\put(1.8,1){\makebox(0,0)[r]{$\scriptstyle -b-d$}}
\put(1.8,11){\makebox(0,0)[r]{$\scriptstyle -b-d+n$}}
\put(1.8,5.8){\makebox(0,0)[tr]{$\scriptstyle 0$}}
\put(4.2,5.8){\makebox(0,0)[tl]{$\scriptstyle c$}}
\put(12,5.8){\makebox(0,0)[t]{$\scriptstyle n$}}
\put(14.1,5.9){\makebox(0,0)[tl]{$\scriptstyle c+n$}}
\put(20.1,6.1){\makebox(0,0)[bl]{$\scriptstyle a+c+n$}}
\put(2,1){\line(1,0){0.1}}
\put(2,3){\line(1,0){0.1}}
\put(2,13){\line(1,0){0.1}}

\put(2,16){\line(1,0){10}}
\put(4,13){\line(1,0){10}}
\put(4,3){\line(1,0){10}}
\put(10,11){\line(1,0){10}}
\put(10,1){\line(1,0){10}}
\put(12,6){\line(0,1){10}}
\put(4,3){\line(0,1){10}}
\put(14,3){\line(0,1){10}}
\put(10,1){\line(0,1){10}}
\put(20,1){\line(0,1){10}}

\put(2,15){\line(1,0){1}}
\put(3,15){\line(0,-1){1}}
\put(3,14){\line(1,0){2}}
\put(5,14){\line(0,-1){2}}
\put(5,12){\line(1,0){2}}
\put(7,12){\line(0,-1){2}}
\put(10,9){\line(1,0){3}}
\put(13,9){\line(0,-1){1}}
\put(13,8){\line(1,0){3}}
\put(16,8){\line(0,-1){1}}
\put(16,7){\line(1,0){3}}
\put(19,7){\line(0,-1){1}}

\put(2,11){\line(1,0){3}}
\put(5,11){\line(0,-1){1}}
\put(5,10){\line(1,0){5}}
\put(8,10){\line(0,-1){2}}
\put(8,8){\line(1,0){3}}
\put(11,8){\line(0,-1){1}}
\put(11,7){\line(1,0){2}}
\put(13,7){\line(0,-1){2}}
\put(13,5){\line(1,0){3}}
\put(16,5){\line(0,-1){1}}
\put(16,4){\line(1,0){3}}
\put(19,4){\line(0,-1){1}}
\put(19,3){\line(1,0){1}}

\put(10,5){\line(1,0){2}}
\put(12,5){\line(0,-1){1}}
\put(12,4){\line(1,0){3}}
\put(15,4){\line(0,-1){1}}
\put(15,3){\line(1,0){3}}
\put(18,3){\line(0,-1){2}}

\put(2,12){\line(1,-4){3.2}}
\put(14,5){\line(-3,-2){8.8}}

\put(4,11){\line(1,-2){6.4}}
\put(20,3){\line(-2,-1){9.6}}

\put(5,-1){\makebox(0,0)[t]{$\scriptstyle \omega(\hat K)$}}
\put(10.2,-2){\makebox(0,0)[t]{$\scriptstyle \omega(\bar K)$}}

\put(2,15){\makebox(0,0){$\scriptstyle \times$}}
\put(3,15){\makebox(0,0){$\scriptstyle \times$}}
\put(3,14){\makebox(0,0){$\scriptstyle \times$}}
\put(5,14){\makebox(0,0){$\scriptstyle \times$}}
\put(5,12){\makebox(0,0){$\scriptstyle \times$}}
\put(7,12){\makebox(0,0){$\scriptstyle \times$}}
\put(7,10){\makebox(0,0){$\scriptstyle \times$}}
\put(10,10){\makebox(0,0){$\scriptstyle \times$}}
\put(10,9){\makebox(0,0){$\scriptstyle \times$}}
\put(13,9){\makebox(0,0){$\scriptstyle \times$}}
\put(13,8){\makebox(0,0){$\scriptstyle \times$}}
\put(14,8){\makebox(0,0){$\scriptstyle \times$}}
\put(16,8){\makebox(0,0){$\scriptstyle \times$}}
\put(16,7){\makebox(0,0){$\scriptstyle \times$}}
\put(19,7){\makebox(0,0){$\scriptstyle \times$}}
\put(19,6){\makebox(0,0){$\scriptstyle \times$}}
\put(20,6){\makebox(0,0){$\scriptstyle \times$}}

\put(2,12){\makebox(0,0){$\scriptstyle \bullet$}}
\put(2,11){\makebox(0,0){$\scriptstyle \bullet$}}
\put(4,11){\makebox(0,0){$\scriptstyle \bullet$}}
\put(5,11){\makebox(0,0){$\scriptstyle \bullet$}}
\put(5,10){\makebox(0,0){$\scriptstyle \bullet$}}
\put(8,10){\makebox(0,0){$\scriptstyle \bullet$}}
\put(8,8){\makebox(0,0){$\scriptstyle \bullet$}}
\put(11,8){\makebox(0,0){$\scriptstyle \bullet$}}
\put(11,7){\makebox(0,0){$\scriptstyle \bullet$}}
\put(12,7){\makebox(0,0){$\scriptstyle \bullet$}}
\put(13,7){\makebox(0,0){$\scriptstyle \bullet$}}
\put(13,5){\makebox(0,0){$\scriptstyle \bullet$}}
\put(14,5){\makebox(0,0){$\scriptstyle \bullet$}}
\put(16,5){\makebox(0,0){$\scriptstyle \bullet$}}
\put(16,4){\makebox(0,0){$\scriptstyle \bullet$}}
\put(19,4){\makebox(0,0){$\scriptstyle \bullet$}}
\put(19,3){\makebox(0,0){$\scriptstyle \bullet$}}
\put(20,3){\makebox(0,0){$\scriptstyle \bullet$}}

\put(10,5){\makebox(0,0){$\scriptstyle \circ$}}
\put(12,5){\makebox(0,0){$\scriptstyle \circ$}}
\put(12,4){\makebox(0,0){$\scriptstyle \circ$}}
\put(15,4){\makebox(0,0){$\scriptstyle \circ$}}
\put(15,3){\makebox(0,0){$\scriptstyle \circ$}}
\put(18,3){\makebox(0,0){$\scriptstyle \circ$}}
\put(18,1){\makebox(0,0){$\scriptstyle \circ$}}

\put(6,-4){\makebox(0,0){$\scriptstyle \times$}}
\put(7,-4){\makebox(0,0){$\scriptstyle \times$}}
\put(6,-4){\line(1,0){1}}
\put(8,-4){\makebox(0,0)[l]{$\scriptstyle :\ \omega(\bar L)$}}
\put(6,-5){\makebox(0,0){$\scriptstyle \circ$}}
\put(7,-5){\makebox(0,0){$\scriptstyle \circ$}}
\put(6,-5){\line(1,0){1}}
\put(8,-5){\makebox(0,0)[l]{$\scriptstyle :\ \omega(J+(a+b,-b-c))$}}

\end{picture}
\]
\vskip1.7cm
\[
\text{Figure 12. Proof of Lemma~\ref{L4.4}}
\]

\vskip5mm
\setlength{\unitlength}{4mm}
\[
\begin{picture}(16,11)
\put(0,4){\line(1,0){7}}
\put(0,11){\line(1,0){7}}
\put(8,3){\line(1,0){7}}
\put(8,10){\line(1,0){7}}
\put(9,7){\line(1,0){7}}
\put(9,0){\line(1,0){7}}
\put(0,4){\line(0,1){7}}
\put(7,4){\line(0,1){7}}
\put(8,3){\line(0,1){7}}
\put(15,3){\line(0,1){7}}
\put(9,0){\line(0,1){7}}
\put(16,0){\line(0,1){7}}

\put(0,6){\line(1,0){1}}
\put(1,6){\line(0,-1){1}}
\put(1,5){\line(1,0){2}}
\put(3,5){\line(0,-1){1}}
\put(5,4){\line(0,-1){1}}
\put(5,3){\line(1,0){2}}
\put(7,3){\line(0,-1){1}}
\put(7,2){\line(1,0){2}}
\put(9,1){\line(1,0){2}}
\put(11,1){\line(0,-1){1}}

\multiput(0,10)(0,-1){4}{\line(1,0){0.1}}
\multiput(2,4)(2,0){3}{\line(0,1){0.1}}
\multiput(8,9)(0,-1){6}{\line(1,0){0.1}}
\multiput(9,6)(0,-1){6}{\line(1,0){0.1}}
\multiput(10,3)(1,0){5}{\line(0,1){0.1}}
\put(10,0){\line(0,1){0.1}}
\multiput(12,0)(1,0){4}{\line(0,1){0.1}}

\put(0,6){\makebox(0,0){$\scriptstyle \circ$}}
\put(1,6){\makebox(0,0){$\scriptstyle \circ$}}
\put(1,5){\makebox(0,0){$\scriptstyle \circ$}}
\put(3,5){\makebox(0,0){$\scriptstyle \circ$}}
\put(3,4){\makebox(0,0){$\scriptstyle \circ$}}
\put(5,4){\makebox(0,0){$\scriptstyle \circ$}}
\put(5,3){\makebox(0,0){$\scriptstyle \circ$}}
\put(7,3){\makebox(0,0){$\scriptstyle \circ$}}
\put(7,2){\makebox(0,0){$\scriptstyle \circ$}}
\put(9,2){\makebox(0,0){$\scriptstyle \circ$}}
\put(9,1){\makebox(0,0){$\scriptstyle \circ$}}
\put(11,1){\makebox(0,0){$\scriptstyle \circ$}}
\put(11,0){\makebox(0,0){$\scriptstyle \circ$}}

\put(0,5){\makebox(0,0){$\scriptstyle \times$}}
\put(0,4){\makebox(0,0){$\scriptstyle \times$}}
\put(1,4){\makebox(0,0){$\scriptstyle \times$}}

\put(0,-1){\makebox(0,0){$\scriptstyle \circ$}}
\put(1,-1){\makebox(0,0){$\scriptstyle \circ$}}
\put(0,-1){\line(1,0){1}}
\put(2,-1){\makebox(0,0)[l]{$\scriptstyle :\ \omega{\textstyle(}[J+D+(a+c,-b-d)]\cap([0,a+c+n]\times[-b-d,n])
{\textstyle)}$}}
\put(0,-2){\makebox(0,0){$\scriptstyle \times$}}
\put(1,-2){\makebox(0,0){$\scriptstyle \times$}}
\put(0,-2){\line(1,0){1}}
\put(2,-2){\makebox(0,0)[l]{$\scriptstyle :\ \omega(L)$}}
\put(0,-3){\makebox(0,0)[l]{$\scriptstyle  p=2$}}

\end{picture}
\]
\vskip1.2cm
\[
\text{Figure 13. A counterexample of Lemma~\ref{L4.4}: $K=\emptyset$}
\]

\vskip8mm
\setlength{\unitlength}{4mm}
\[
\begin{picture}(16,10)
\put(0,3){\line(1,0){7}}
\put(0,10){\line(1,0){7}}
\put(1,1){\line(1,0){7}}
\put(1,8){\line(1,0){7}}
\put(9,0){\line(1,0){7}}
\put(9,7){\line(1,0){7}}
\put(0,3){\line(0,1){7}}
\put(7,3){\line(0,1){7}}
\put(1,1){\line(0,1){7}}
\put(8,1){\line(0,1){7}}
\put(9,0){\line(0,1){7}}
\put(16,0){\line(0,1){7}}

\multiput(0,9)(0,-1){6}{\line(1,0){0.1}}
\multiput(1,7)(0,-1){6}{\line(1,0){0.1}}
\multiput(9,6)(0,-1){6}{\line(1,0){0.1}}
\multiput(1,3)(1,0){6}{\line(0,1){0.1}}
\multiput(2,1)(1,0){6}{\line(0,1){0.1}}
\multiput(10,0)(1,0){6}{\line(0,1){0.1}}

\put(4,10){\line(0,-1){1}}
\put(4,9){\line(1,0){3}}
\put(6,10){\line(0,-1){2}}
\put(8,10){\line(0,-1){1}}
\put(8,9){\line(1,0){2}}
\put(10,9){\line(0,-1){1}}
\put(10,8){\line(1,0){2}}
\put(12,8){\line(0,-1){1}}
\put(14,7){\line(0,-1){1}}
\put(14,6){\line(1,0){2}}

\put(8,10){\makebox(0,0){$\scriptstyle \circ$}}
\put(8,9){\makebox(0,0){$\scriptstyle \circ$}}
\put(10,9){\makebox(0,0){$\scriptstyle \circ$}}
\put(10,8){\makebox(0,0){$\scriptstyle \circ$}}
\put(12,8){\makebox(0,0){$\scriptstyle \circ$}}
\put(12,7){\makebox(0,0){$\scriptstyle \circ$}}
\put(14,7){\makebox(0,0){$\scriptstyle \circ$}}
\put(14,6){\makebox(0,0){$\scriptstyle \circ$}}
\put(16,6){\makebox(0,0){$\scriptstyle \circ$}}

\put(6,10){\makebox(0,0){$\scriptstyle \times$}}
\put(6,9){\makebox(0,0){$\scriptstyle \times$}}
\put(7,9){\makebox(0,0){$\scriptstyle \times$}}

\put(4,10){\makebox(0,0){$\scriptstyle \bullet$}}
\put(4,9){\makebox(0,0){$\scriptstyle \bullet$}}
\put(6,9){\makebox(0,0){$\scriptstyle \bullet$}}
\put(6,8){\makebox(0,0){$\scriptstyle \bullet$}}
\put(8,8){\makebox(0,0){$\scriptstyle \bullet$}}

\put(0,-1){\makebox(0,0){$\scriptstyle \circ$}}
\put(1,-1){\makebox(0,0){$\scriptstyle \circ$}}
\put(0,-1){\line(1,0){1}}
\put(2,-1){\makebox(0,0)[l]{$\scriptstyle :\ \omega{\textstyle(}[J+D+(a+c,-b-d)]\cap([0,a+c+n]\times[-b-d,n])
{\textstyle)}$}}
\put(0,-2){\makebox(0,0){$\scriptstyle \times$}}
\put(1,-2){\makebox(0,0){$\scriptstyle \times$}}
\put(0,-2){\line(1,0){1}}
\put(2,-2){\makebox(0,0)[l]{$\scriptstyle :\ \omega(L)$}}
\put(0,-3){\makebox(0,0){$\scriptstyle \bullet$}}
\put(1,-3){\makebox(0,0){$\scriptstyle \bullet$}}
\put(0,-3){\line(1,0){1}}
\put(2,-3){\makebox(0,0)[l]{$\scriptstyle :\ \omega{\textstyle(}[K+D+(c,-d)]\cap([0,c+n]\times[-d,n])
{\textstyle)}$}}
\put(0,-4){\makebox(0,0)[l]{$\scriptstyle  p=2$}}

\end{picture}
\]
\vskip1.5cm
\[
\text{Figure 14. A counterexample of Lemma~\ref{L4.4}: $K=U$}
\]
\vfill\eject

\vskip5mm
\setlength{\unitlength}{6mm}
\[
\begin{picture}(9,6)
\put(0,2){\line(1,0){4}}
\put(0,6){\line(1,0){4}}
\put(3,1){\line(1,0){4}}
\put(3,5){\line(1,0){4}}
\put(5,0){\line(1,0){4}}
\put(5,4){\line(1,0){4}}
\put(0,2){\line(0,1){4}}
\put(4,2){\line(0,1){4}}
\put(3,1){\line(0,1){4}}
\put(7,1){\line(0,1){4}}
\put(5,0){\line(0,1){4}}
\put(9,0){\line(0,1){4}}

\put(0,5){\line(1,0){0.1}}
\put(3,4){\line(1,0){0.1}}
\put(5,2){\line(1,0){0.1}}
\multiput(1,2)(1,0){2}{\line(0,1){0.1}}
\multiput(6,0)(1,0){2}{\line(0,1){0.1}}
\multiput(4,1)(2,0){2}{\line(0,1){0.1}}

\put(0,4){\line(1,0){1}}
\put(1,4){\line(0,-1){1}}
\put(0,3){\line(1,0){8}}
\put(8,3){\line(0,-1){3}}

\put(0,4){\makebox(0,0){{\large $\circ$}}}
\put(1,4){\makebox(0,0){$\scriptstyle \circ$}}
\put(1,3){\makebox(0,0){$\scriptstyle \circ$}}
\put(5,3){\makebox(0,0){$\scriptstyle \circ$}}
\put(8,3){\makebox(0,0){$\scriptstyle \circ$}}
\put(8,0){\makebox(0,0){$\scriptstyle \circ$}}
\put(0,4){\makebox(0,0){$\scriptstyle \times$}}
\put(0,3){\makebox(0,0){$\scriptstyle \times$}}
\put(4,3){\makebox(0,0){$\scriptstyle \times$}}
\put(0,4){\makebox(0,0){$\scriptscriptstyle \bullet$}}
\put(0,3){\makebox(0,0){$\scriptstyle \bullet$}}
\put(5,3){\makebox(0,0){$\scriptstyle \bullet$}}
\put(7,3){\makebox(0,0){$\scriptstyle \bullet$}}

\put(0,-1){\makebox(0,0){$\scriptstyle \circ$}}
\put(1,-1){\makebox(0,0){$\scriptstyle \circ$}}
\put(0,-1){\line(1,0){1}}
\put(2,-1){\makebox(0,0)[l]{$\scriptstyle :\ \omega{\textstyle(}[J+D+(a+c,-b-d)]\cap([0,a+c+n]\times[-b-d,n])
{\textstyle)}$}}
\put(0,-2){\makebox(0,0){$\scriptstyle \times$}}
\put(1,-2){\makebox(0,0){$\scriptstyle \times$}}
\put(0,-2){\line(1,0){1}}
\put(2,-2){\makebox(0,0)[l]{$\scriptstyle :\ \omega(L)$}}
\put(0,-3){\makebox(0,0){$\scriptstyle \bullet$}}
\put(1,-3){\makebox(0,0){$\scriptstyle \bullet$}}
\put(0,-3){\line(1,0){1}}
\put(2,-3){\makebox(0,0)[l]{$\scriptstyle :\ \omega{\textstyle(}[K+D+(c,-d)]\cap([0,c+n]\times[-d,n])
{\textstyle)}$}}
\put(0,-4){\makebox(0,0)[l]{$\scriptstyle  p=7$}}

\end{picture}
\]
\vskip2cm
\[
\text{Figure 15. A counterexample of Lemma~\ref{L4.4}: $\omega(K)$ is a single horizontal step}
\]
\vskip3mm

\begin{thm}\label{T4.5} Let $i$ be an integer with $0\le i\le n$.
Let $J_{i+1},\dots,J_n$ be a backward consistent sequence of ideals of $U$ and let $J_i$ be an ideal of $U$.
Then $J_i$ is consistent with $J_{i+1},\dots,J_n$ if and only if the following conditions are satisfied.

\begin{enumerate}
\item[(i)] $J_i\supset J_{i+1}$.

\item[(ii)] $\bigl[J_i+D-(0,p)\bigr]\cap U\subset J_{i+1}$.

\item[(iii)] $\bigl[J_{i+p}+D+(1,0)\bigr]\cap U\subset J_i$. (If $i+p>n$, this condition is null.)

\item[(iv)] Let $\alpha_i$ be the largest integer such that $1\le \alpha_i\le p-1$, $i+\alpha_i\le n$ and
$J_{i+\alpha_i}\ne\emptyset$. Then $\bigl[J_{i+\alpha_i}+D+(1,\, -p^2+p\alpha_i)\bigr]\cap U\subset J_i$.
(If such an $\alpha_i$ does not exist, this condition is null.)
\end{enumerate}
\end{thm}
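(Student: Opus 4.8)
The plan is to derive Theorem~\ref{T4.5} from Lemma~\ref{L4.1}(ii), which says that $J_i$ is consistent with $J_{i+1},\dots,J_n$ if and only if \eqref{JiJj1}, \eqref{i+ap+b} and \eqref{i+ap+b1} all hold. It therefore suffices to show that, assuming the backward consistency of $J_{i+1},\dots,J_n$, this entire family of conditions is equivalent to (i)--(iv). One implication is immediate, because each of (i)--(iv) is a single instance of the family: (i) is \eqref{i+ap+b} with $(a,b)=(0,1)$, (ii) is \eqref{JiJj1} with $j=i+1$, (iii) is \eqref{i+ap+b} with $(a,b)=(1,0)$, and (iv) is \eqref{i+ap+b1} with $(a,b)=(0,\alpha_i)$ (instances whose index lies outside $[0,n]$, and the case $i=n$, being vacuous). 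So the work is in the converse.

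Assume (i)--(iv). Since the intersection of an ideal with a sub-box is again an ideal, backward consistency of $J_{i+1},\dots,J_n$ implies backward consistency of $J_j,\dots,J_n$ for every $j\ge i+1$, so by Lemma~\ref{L4.1}(ii) all of \eqref{JiJj1}, \eqref{i+ap+b}, \eqref{i+ap+b1} already hold at every index $j>i$; I use this freely below. Iterating (i) gives $J_{j'}\subseteq J_j$ whenever $i\le j\le j'\le n$. I also reduce to $n\ge p$: when $n<p$ the order on $\mathcal U$ is a product of chains, (iii) is vacuous, the left-hand sides of (ii) and (iv) are empty by a short computation with the cone $D$, and (i) is precisely the consistency criterion, so the theorem holds in that case; assume $n\ge p$ henceforth. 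Condition \eqref{JiJj1} for $j=i+1$ is just (ii); for $j>i+1$, Lemma~\ref{L4.3} composes (ii), i.e.\ $[J_i+D-(0,p)]\cap U\subseteq J_{i+1}$, with \eqref{JiJj1} at index $i+1$ (which holds by backward consistency) to yield $[J_i+D-(j-i)(0,p)]\cap U\subseteq J_j$.

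For \eqref{i+ap+b}: if $a=0$ then $[J_{i+b}+D]\cap U=J_{i+b}\subseteq J_i$; if $a\ge 1$, the containment $J_{i+ap+b}\subseteq J_{i+ap}$ reduces the claim to $b=0$, and Lemma~\ref{L4.4}, applied with $K=J_{i+p}$, composes \eqref{i+ap+b} at index $i+p$ with parameters $(a-1,0)$ and condition (iii) to give $[J_{i+ap}+D+(a,0)]\cap U\subseteq J_i$. For \eqref{i+ap+b1}: the case $(a,b)=(0,0)$ holds because $(1,-p^2)\in D$, whence $J_i+(1,-p^2)+D\subseteq J_i+D$; for $a=0$ and $1\le b\le p-1$, if $J_{i+b}=\emptyset$ there is nothing to prove, and otherwise $b\le\alpha_i$ by definition of $\alpha_i$, so Lemma~\ref{L4.4}, applied with $K=J_{i+\alpha_i}$, composes the inclusion $[J_{i+b}+D-(\alpha_i-b)(0,p)]\cap U\subseteq J_{i+\alpha_i}$ (an instance of \eqref{JiJj1} at index $i+b$, trivial when $b=\alpha_i$) with (iv) to give $[J_{i+b}+D+(1,-p^2+pb)]\cap U\subseteq J_i$; finally, for $a\ge 1$, Lemma~\ref{L4.4}, applied with $K=J_{i+p}$, composes \eqref{i+ap+b1} at index $i+p$ with parameters $(a-1,b)$ and condition (iii) to give the desired inclusion. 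In each application one checks that the composed shift produced by Lemma~\ref{L4.3} or Lemma~\ref{L4.4} is exactly the one wanted; these are short computations (e.g.\ $-(\alpha_i-b)(0,p)+(1,-p^2+p\alpha_i)=(1,-p^2+pb)$).

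The delicate point — and the step I expect to be the main obstacle — is that Lemma~\ref{L4.4} requires $K\ne\emptyset$, $K\ne U$, and $\omega(K)$ not a single horizontal step, whereas above $K$ is $J_{i+p}$ or $J_{i+\alpha_i}$. If $K=\emptyset$, the monotonicity $J_{j'}\subseteq J_j$ forces the ideal feeding into Lemma~\ref{L4.4} to be empty, so the left-hand side of the target inclusion is empty; if $K=U$, then (i) forces $J_i=U$ and the inclusion is automatic; and if $\omega(K)$ is a single horizontal step (so $K=[0,h]\times\{0\}$ with $1\le h\le p-1$), the downstream ideals are again short horizontal segments and the required inclusions follow directly from (i) and (iii). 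The definition of $\alpha_i$ as the largest $\alpha$ with $1\le\alpha\le p-1$, $i+\alpha\le n$ and $J_{i+\alpha}\ne\emptyset$ is exactly what guarantees $K=J_{i+\alpha_i}\ne\emptyset$ in the one place where an empty $K$ cannot be tolerated, and the monotonicity from (i) is what allows a general parameter $b$ to be reduced to $\alpha_i$ (and, in \eqref{i+ap+b}, to $b=0$). Beyond this degenerate-case analysis and the routine shift bookkeeping, the argument is a direct unwinding of Lemmas~\ref{L4.2}--\ref{L4.4}.
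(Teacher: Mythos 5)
Your overall outline matches the paper's proof: reduce to Lemma~\ref{L4.1}, note that the forward direction is a list of special cases, and for the converse verify~\eqref{JiJj1} via Lemma~\ref{L4.3} and~\eqref{i+ap+b},~\eqref{i+ap+b1} by composing shifted inclusions via Lemma~\ref{L4.4}, with the $n<p$ case dispatched first. The one real organizational difference is in~\eqref{i+ap+b1}: for $a\ge1$ you compose~\eqref{i+ap+b1} at index $i+p$ with parameters $(a-1,b)$ against condition~(iii) (so Lemma~\ref{L4.4} is used with $K=J_{i+p}$), whereas the paper composes a three-ideal chain $J_{i+ap+b}\to J_{i+b}\to J_{i+\alpha_i}\to J_i$ (so Lemma~\ref{L4.4} is invoked with both $K=J_{i+b}$ and $K=J_{i+\alpha_i}$). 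Both decompositions produce the right total shift, and both handle $K=\emptyset$ and $K=U$ the same way via the monotonicity from~(i).

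Where you and the paper part ways is on the ``single horizontal step'' hypothesis of Lemma~\ref{L4.4}. The paper's parenthetical note --- that $\omega(K)$ is never a single horizontal step when $n\ge p$ --- is in fact false: for any $n\ge 1$ the ideal $K=[0,h]\times\{0\}$ with $1\le h\le p-1$ is a legitimate ideal of $U=[0,n]^2$ whose boundary walk is the single horizontal step $(0,0),(h,0)$ (one checks Definition~\ref{D3.1}(i)--(v) directly: $x_0=0$, $y_1=0$, and the step length $h\le p-1$ satisfies (v)). So there really is something to address, and you deserve credit for flagging it. However, your patch --- ``the downstream ideals are again short horizontal segments and the required inclusions follow directly from (i) and (iii)'' --- does not cover all the uses. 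It works when $K=J_{i+p}$, because there the input ideal $J_{i+ap}$ or $J_{i+ap+b}$ is contained in $K$ by the monotonicity from~(i), hence is itself a horizontal segment, and one can verify the target inclusion by hand. But when $K=J_{i+\alpha_i}$ (the case $a=0$, $1\le b<\alpha_i$ of~\eqref{i+ap+b1}), the input ideal is $J_{i+b}\supseteq J_{i+\alpha_i}=K$, which is \emph{not} downstream and need not be a thin horizontal segment at all. So this subcase is not disposed of by your argument, and a genuine justification for it (either a separate direct computation, or a proof that Lemma~\ref{L4.4} in fact still holds for $K=[0,h]\times\{0\}$ when $n\ge p$, which appears plausible from the structure of $\hat K$ and $\bar K$ but is not immediate) is still missing --- in your write-up as well as in the paper's.
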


\begin{proof}
First note that the theorem holds when $n<p$. In fact, in this case, 
since the partial order $\prec$ in ${\mathcal U}$ is the cartesian product of linear orders,
$J_i$ is consistent with $J_{i+1},\dots,J_n$ if and only if (i) is satisfied. Meanwhile, as one can easily see, (ii)
is automatically satisfied; (iii) is null;  (iv) is either automatically satisfied or
is null. Therefore we assume $n\ge p$.

We show that (\ref{JiJj1}) -- (\ref{i+ap+b1}) in Lemma~\ref{L4.1} together are equivalent to conditions (i) -- (iv) in Theorem~\ref{T4.5}

($\Rightarrow$) Condition (i) follows from (\ref{i+ap+b}) with $a=0$ and $b=1$ since $[J_{i+1}+D]\cap U=J_{i+1}$. Condition (ii) is a special
case of (\ref{JiJj1}). Conditions (iii) and (iv) are special cases of
(\ref{i+ap+b}) and (\ref{i+ap+b1}).

($\Leftarrow$) To prove (\ref{JiJj1}), let $i<j\le n$.
By (ii) and the fact that $J_{i+1},\dots,J_n$ is backward consistent, we have
\[
\begin{cases}
\bigl[J_i+D-(0,p)\bigr]\cap U\subset J_{i+1},\cr
\bigl[J_{i+1}+D-(j-i-1)(0,p)\bigr]\cap U\subset J_j,\cr
\end{cases}
\]
Thus by Lemma~\ref{L4.3},
\[
\bigl[J_i+D-(j-i)(0,p)\bigr]\cap U\subset J_j.
\]

To prove (\ref{i+ap+b}), let $a,b\in{\Bbb Z}$ with $a\ge 0$, $0\le b\le p-1$, $i+ap+b\le n$.
We may assume $a\ge 1$ since (\ref{i+ap+b}) becomes obvious when $a=0$.
By (iii) and the fact that $J_{i+1},\dots,J_n$ is backward consistent, we have
\begin{equation}\label{Ji+ap+D}
\begin{cases}
\bigl[J_{i+p}+D+(1,0)\bigr]\cap U\subset J_i,\cr
\bigl[J_{i+ap}+D+(a-1,0)\bigr]\cap U\subset J_{i+p}.\cr
\end{cases}
\end{equation}
We claim that
\begin{equation}\label{T4.4-1}
\bigl[J_{i+ap}+D+(a,0)\bigr]\cap U\subset J_i.
\end{equation}
In fact, if $J_{i+p}\ne\emptyset$ and  $J_{i+p}\ne U$, then (\ref{T4.4-1}) follows from (\ref{Ji+ap+D}) and Lemma~\ref{L4.4}. If $J_{i+p}=\emptyset$, then by (i), $J_{i+ap}=\emptyset$ since $J_{i+1},\dots,
J_n$ is backward consistent. Thus (\ref{T4.4-1}) holds. If $J_{i+p}=U$, by (i), we have 
$J_i=U$ and (\ref{T4.4-1}) also holds. Since $J_{i+ap+b}\subset J_{i+ap}$,
we have
\[
\bigl[J_{i+ap+b}+D+(a,0)\bigr]\cap U\subset\bigl[J_{i+ap}+D+(a,0)\bigr]\cap U\subset J_i.
\]

Finally, we prove (\ref{i+ap+b1}). We may assume $b\ge 1$, since if $b=0$,
we have
\begin{equation}\label{Ji+ap}
\begin{array}{rll}
&\bigl[J_{i+ap}+D+(a+1,-p^2)\bigr]\cap U\cr
\subset\kern-3mm& \bigl[J_{i+ap}+D+(a,0)\bigr]\cap U\ & \text{(since $(1,-p^2)\in D$)}\cr
\subset\kern-3mm&J_i &\text{(by (\ref{T4.4-1}))}.\cr
\end{array}
\end{equation} 
In (iv), if $\alpha_i$ does not exist or $\alpha_i<b$, then $J_{i+b}=\emptyset$. Hence $J_{i+ap+b}=\emptyset$ and we are done. So assume that $\alpha_i\ge b$. By (iv) and the fact that
$J_{i+1},\dots, J_n$ is backward consistent, we have
\begin{equation}\label{I1}
\begin{cases}
\bigl[J_{i+\alpha_i}+D+(1,-p^2+p\alpha_i)\bigr]\cap U\subset J_i,\cr
\bigl[J_{i+b}+D+(0,-p(\alpha_i-b))\bigr]\cap U\subset J_{i+\alpha_i},\cr
\bigl[J_{i+ap+b}+D+(a,0)\bigr]\cap U\subset J_{i+b}.\cr
\end{cases}
\end{equation}
If neither of $J_{i+\alpha_i}$ and $J_{i+b}$ is $\emptyset$ or $U$, by (\ref{I1}) and Lemma~\ref{L4.4}, we have
\[
\bigl[J_{i+ap+b}+D+(a+1,-p^2+pb)\bigr]\cap U\subset J_i,
\]
which is (\ref{i+ap+b1}).
If one of $J_{i+\alpha_i}$ and $J_{i+b}$ is $\emptyset$ or $U$, then 
$J_{i+b}=\emptyset$ or $J_{i+b}=U$ or $J_{i+\alpha_i}=U$ since $J_{i+\alpha_i}\ne\emptyset$. Thus
$J_{i+ap+b}=\emptyset$ or $J_i=U$ and (\ref{i+ap+b1}) also holds.       
\end{proof}


\begin{thm}\label{T4.6} Let $i$ be an integer with $0\le i\le n$.
Let $J_0,\dots,J_{i-1}$ be a forward consistent sequence of ideals of $U$ and let $J_i$ be an ideal of $U$.
Then $J_i$ is consistent with $J_0,\dots,J_{i-1}$ if and only if the following conditions are satisfied.

\begin{enumerate}
\item[(i)] $J_i\subset J_{i-1}$.

\item[(ii)] $J_i\supset\bigl[J_{i-1}+D-(0,p)\bigr]\cap U$.

\item[(iii)] $\bigl[J_i+D+(1,0)\bigr]\cap U\subset J_{i-p}$. (If $i-p<0$, this condition is null.)

\item[(iv)] Let $\beta_i$ be the largest integer such that $1\le \beta_i\le p-1$, $i-\beta_i\ge 0$ and
$J_{i-\beta_i}\ne U$. Then $\bigl[J_i+D+(1,\, -p^2+p\beta_i)\bigr]\cap U\subset J_{i-\beta_i}$.
(If such a $\beta_i$ does not exist, this condition is null.)
\end{enumerate}
\end{thm}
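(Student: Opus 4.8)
The plan is to establish the theorem as the exact forward counterpart of Theorem~\ref{T4.5}. We show that conditions (i)--(iv) together are equivalent to the three families of inclusions (\ref{JiJj}), (\ref{i-ap-b}), (\ref{i-ap-b1}) of Lemma~\ref{L4.1}~(i), which by that lemma characterize consistency of $J_i$ with $J_0,\dots,J_{i-1}$. As in the proof of Theorem~\ref{T4.5}, the case $n<p$ is disposed of first: there $\prec$ on $\mathcal U$ is a product of linear orders, so consistency is equivalent to $J_i\subset J_{i-1}$, i.e.\ to (i); meanwhile one checks directly that $[J_{i-1}+D-(0,p)]\cap U=\emptyset$, so (ii) holds vacuously, (iii) is null, and (iv) is either null or vacuously satisfied. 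Hence we may assume $n\ge p$.

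The ``only if'' direction is immediate: (i) is (\ref{i-ap-b}) with $(a,b)=(0,1)$ (using $[J_i+D]\cap U=J_i$), (ii) is (\ref{JiJj}) with $j=i-1$, (iii) is (\ref{i-ap-b}) with $(a,b)=(1,0)$, and (iv) is (\ref{i-ap-b1}) with $(a,b)=(0,\beta_i)$.

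For the ``if'' direction, the key point is that, since $J_0,\dots,J_{i-1}$ is forward consistent, so is every prefix, and hence by Lemma~\ref{L4.1}~(i) the inclusions (\ref{JiJj}), (\ref{i-ap-b}), (\ref{i-ap-b1}) already hold with $i$ replaced by any $i'<i$. First, (\ref{JiJj}) for $0\le j<i$ is obtained by telescoping the one-step relations $[J_k+D-(0,p)]\cap U\subset J_{k+1}$ for $j\le k<i$ (the case $k=i-1$ is (ii), the others are prefix instances of (\ref{JiJj})) using Lemma~\ref{L4.3}. Next, (\ref{i-ap-b}): for $a=0$ it reads $J_i\subset J_{i-b}$, which follows from the monotonicity $J_k\subset J_{k-1}$ of a forward consistent sequence; for $a\ge1$ we chain (iii), $[J_i+D+(1,0)]\cap U\subset J_{i-p}$, with the prefix instance $[J_{i-p}+D+(a-1,0)]\cap U\subset J_{i-ap}$ through Lemma~\ref{L4.4}, and then use $J_{i-ap}\subset J_{i-ap-b}$. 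The degenerate cases are settled by hand: if $J_{i-p}=\emptyset$ then $[J_i+D+(1,0)]\cap U=\emptyset$, which forces $J_i=\emptyset$ because $(0,0)\in(u+(1,0)+D)\cap U$ for every $u\in U$; if $J_{i-p}=U$ then $J_{i-ap-b}=U$ by monotonicity. Finally, (\ref{i-ap-b1}): when $b=0$, the relation $(1,-p^2)\in D$ reduces it to (\ref{i-ap-b}); when $b\ge1$, if $\beta_i$ fails to exist or $\beta_i<b$ then $J_{i-b}=U$, whence $J_{i-ap-b}=U$ and the inclusion is trivial, while if $1\le b\le\beta_i$ we combine (iv), $[J_i+D+(1,-p^2+p\beta_i)]\cap U\subset J_{i-\beta_i}$, with the prefix instances $[J_{i-\beta_i}+D-(\beta_i-b)(0,p)]\cap U\subset J_{i-b}$ and $[J_{i-b}+D+(a,0)]\cap U\subset J_{i-ap-b}$, and apply Lemma~\ref{L4.4}; its hypotheses hold for the intermediate ideal $J_{i-\beta_i}$ precisely because $J_{i-\beta_i}\ne\emptyset$ and $J_{i-\beta_i}\ne U$ by the choice of $\beta_i$ (and $n\ge p$ rules out a single horizontal step), while the remaining degenerate case $J_{i-b}=U$ again gives $J_{i-ap-b}=U$.

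The main obstacle is the bookkeeping of the cases in which Lemma~\ref{L4.4} does not apply, namely when an intermediate ideal is $\emptyset$, is $U$, or has a single horizontal step as its boundary. Once $n\ge p$ the last possibility is excluded, so only $\emptyset$ and $U$ remain, and these must be traced through using the monotonicity $J_k\subset J_{k-1}$ of a forward consistent sequence. Here the duality with Theorem~\ref{T4.5} is the guiding principle: the chain of ideals is now \emph{decreasing} in the index, so it is the value $U$ (rather than $\emptyset$) that propagates to smaller indices and therefore has to be tracked, which is precisely why the auxiliary integer $\beta_i$ is defined through the condition $J_{i-\beta_i}\ne U$ rather than $J_{i-\beta_i}\ne\emptyset$. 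With these cases organized correctly, the chaining via Lemmas~\ref{L4.3} and~\ref{L4.4} goes through exactly as in the proof of Theorem~\ref{T4.5}.
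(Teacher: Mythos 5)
Your overall strategy is exactly the paper's: reduce to the three families of inclusions in Lemma~\ref{L4.1}~(i), dispatch $n<p$ via the product-order observation, verify the ``only if'' direction by specialization, and for the ``if'' direction chain one-step inclusions through Lemmas~\ref{L4.3} and~\ref{L4.4} while tracking degenerate intermediate ideals. The treatment of (\ref{JiJj}) and of (\ref{i-ap-b}) is sound (your argument that $J_{i-p}=\emptyset$ forces $J_i=\emptyset$ is more roundabout than simply invoking $J_i\subset J_{i-1}\subset\cdots\subset J_{i-p}$, but it is correct).

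There is, however, a genuine gap in the argument for (\ref{i-ap-b1}). You assert that ``its hypotheses hold for the intermediate ideal $J_{i-\beta_i}$ precisely because $J_{i-\beta_i}\ne\emptyset$ and $J_{i-\beta_i}\ne U$ by the choice of $\beta_i$.'' The definition of $\beta_i$ guarantees only $J_{i-\beta_i}\ne U$; it does not preclude $J_{i-\beta_i}=\emptyset$. (Take, for instance, $J_0=U$ and $J_1=\cdots=J_{i-1}=\emptyset$; then $\beta_i=1$ and $J_{i-\beta_i}=\emptyset$.) Likewise, you address only the degenerate case $J_{i-b}=U$ among the cases in which Lemma~\ref{L4.4} fails to apply at the second intermediate ideal, leaving $J_{i-b}=\emptyset$ untreated. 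Both omitted cases are easy: $J_{i-\beta_i}=\emptyset$ or $J_{i-b}=\emptyset$ forces $J_i\subset J_{i-1}\subset\cdots=\emptyset$, so $J_i=\emptyset$ and (\ref{i-ap-b1}) is vacuous. The paper disposes of this with a single case split: if one of $J_{i-b},J_{i-\beta_i}$ is $\emptyset$ or $U$, then since $J_{i-\beta_i}\ne U$ the possibilities are $J_{i-b}\in\{\emptyset,U\}$ or $J_{i-\beta_i}=\emptyset$, each of which yields $J_{i-ap-b}=U$ or $J_i=\emptyset$. Your closing paragraph correctly identifies that both $\emptyset$ and $U$ must be traced through monotonicity, but the body of the argument contradicts this by claiming $\emptyset$ cannot occur at $i-\beta_i$; as written the proof of (\ref{i-ap-b1}) is incomplete.
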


\begin{proof}
By the same reason in the proof of Theorem~\ref{T4.5}, we may assume $n\ge p$.

We show that (\ref{JiJj}) -- (\ref{i-ap-b1}) in Lemma~\ref{L4.1} together are equivalent to
conditions (i) -- (iv) in Theorem~\ref{T4.6}.  Since the proof is essentially the same as the proof of Theorem~\ref{T4.5}, we only show that (i) -- (iv) of Theorem~\ref{T4.6} imply (\ref{i-ap-b1}). 

Let $a, b$ be integers such that $a\ge0$, $0\le b\le p-1$ and $i-ap-b\ge 0$. By 
an argument similar to
(\ref{Ji+ap}), we may assume $b\ge 1$. In (iv), if $\beta_i$ does not exist or if $\beta_i<b$, then $J_{i-b}=U$. Hence $J_{i-ap-b}=U$ and  (\ref{i-ap-b1})
is obvious. So we may assume that $\beta_i\ge b$. By (iv)
and the fact that $J_0,\dots,J_{i-1}$ is forward consistent,  we have
\begin{equation}\label{I2}
\begin{cases}
\bigl[J_i+D+(1,-p^2+p\beta_i)\bigr]\cap U\subset J_{i-\beta_i},\cr
\bigl[J_{i-\beta_i}+D+(0,-p(\beta_i-b))\bigr]\cap U\subset J_{i-b},\cr
\bigl[J_{i-b}+D+(a,0)\bigr]\cap U\subset J_{i-ap-b}.\cr
\end{cases}
\end{equation}
If neither of $J_{i-b}$ and $J_{i-\beta_i}$ is $\emptyset$ or $U$, (\ref{i-ap-b1}) follows from 
(\ref{I2}) and Lemma~\ref{L4.4}.
If one of $J_{i-b}$ and $J_{i-\beta_i}$ is $\emptyset$ or $U$, then 
$J_{i-b}=\emptyset$ or $J_{i-b}=U$ or $J_{i-\beta_i}=\emptyset$ since
$J_{i-\beta_i}\ne U$. Thus
$J_{i-ap-b}=U$ or $J_i=\emptyset$;
in either case, (\ref{i-ap-b1}) holds.
\end{proof}

\begin{cor}\label{C4.7} {\em (Backward slicing)} Let $i$ be an integer with $0\le i\le n$.
Let $J_{i+1},\dots,J_n$ be a backward consistent sequence of ideals of $U$ and let $J_i$ be an ideal of $U$. Put $W_j=\omega(J_j)$, $i\le j\le n$.
Let
\begin{equation}\label{X}
\begin{split}
X_i=\,&W_{i+1}\vee\bigl[\underline{\bigl(W_{i+p}+(1,0)\bigr)}_{[0,n+1]\times[0,n]}\bigm|_U \bigr]\cr
&\vee\bigl[\underline{\bigl(W_{i+\alpha_i}+(1,-p^2+p\alpha_i)\bigr)}_{[0,n+1]\times[-p^2+p\alpha_i,n]}\bigm|_U\bigr],\cr
\end{split}
\end{equation}
where $\alpha_i$ is defined in Theorem~\ref{T4.5} {\rm (iv)}, and
\begin{equation}\label{Y}
Y_i=\overline{(W_{i+1})}_{[0,n]\times[-p,n]}\bigm|_{[0,n]\times[-p,-p+n]}+(0,p).
\end{equation}
Then $J_i$ is consistent with $J_{i+1},\dots,J_n$ if and only if 
\begin{equation}\label{XWY}
X_i\le W_i\le Y_i.
\end{equation}
\end{cor}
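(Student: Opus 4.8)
The plan is to derive the corollary from Theorem~\ref{T4.5} by passing from ideals to their boundary walks and rewriting each of the conditions (i)--(iv) of that theorem as an inequality in the lattice $({\mathcal W}_U,\le)$. I will use repeatedly that $W\mapsto\iota(W)$ is a lattice isomorphism, so that a join $W_1\vee\cdots\vee W_k$ of walks is $\le W$ exactly when $W_j\le W$ for every $j$, and that the shift, extension, and restriction operations are all monotone with respect to $\le$. The degenerate case $i=n$, where the backward consistent sequence is empty, is vacuous (any $J_n$ is consistent, $X_n$ is the empty walk and $Y_n$ the top walk), and the case $n<p$ is already absorbed into Theorem~\ref{T4.5}, so in the main argument I may work with $0\le i<n$ and $n\ge p$.

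First I would handle conditions (i) and (ii). Condition (i), $J_i\supset J_{i+1}$, is simply $W_{i+1}\le W_i$. For condition (ii), $\bigl[J_i+D-(0,p)\bigr]\cap U\subset J_{i+1}$, I would invoke Lemma~\ref{L4.2} with $J=J_i$, $K=J_{i+1}$, $a=0$, $b=p$; the equivalence (i)$\Leftrightarrow$(v) there gives $W_i+(0,-p)\le\bigl[\overline{(W_{i+1})}_{[0,n]\times[-p,n]}\bigr]\bigm|_{[0,n]\times[-p,-p+n]}$, and translating both walks up by $(0,p)$ turns this into $W_i\le Y_i$ with $Y_i$ exactly as in~(\ref{Y}).

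Next I would treat conditions (iii) and (iv). For (iii), $\bigl[J_{i+p}+D+(1,0)\bigr]\cap U\subset J_i$ (present only when $i+p\le n$), Lemma~\ref{L4.2} with $J=J_{i+p}$, $K=J_i$, $a=1$, $b=0$, via (i)$\Leftrightarrow$(iv), gives $\bigl[\underline{(W_{i+p}+(1,0))}_{[0,n+1]\times[0,n]}\bigr]\bigm|_U\le W_i$. For (iv), $\bigl[J_{i+\alpha_i}+D+(1,-p^2+p\alpha_i)\bigr]\cap U\subset J_i$ (present only when $\alpha_i$ exists), Lemma~\ref{L4.2} with $J=J_{i+\alpha_i}$, $K=J_i$, $a=1$, $b=p^2-p\alpha_i$ (nonnegative since $1\le\alpha_i\le p-1$), via (i)$\Leftrightarrow$(iv), gives $\bigl[\underline{(W_{i+\alpha_i}+(1,-p^2+p\alpha_i))}_{[0,n+1]\times[-p^2+p\alpha_i,n]}\bigr]\bigm|_U\le W_i$. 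A null condition just drops the corresponding term (equivalently replaces it by the empty walk). Combining, conditions (i), (iii), (iv) together say precisely that each of the three walks appearing in~(\ref{X}) is $\le W_i$, i.e.\ $X_i\le W_i$, while (ii) says $W_i\le Y_i$; hence $J_i$ is consistent with $J_{i+1},\dots,J_n$ if and only if $X_i\le W_i\le Y_i$, which is~(\ref{XWY}).

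The hard part will not be any single deduction but the bookkeeping: making the parameters $a,b$ and the ambient rectangles in each application of Lemma~\ref{L4.2} match exactly the rectangles and shifts written in~(\ref{X}) and~(\ref{Y}), and verifying the degenerate cases uniformly --- $i+p>n$, nonexistence of $\alpha_i$, and ideals $J_j$ equal to $\emptyset$ or $U$ --- so that the equivalence holds without exception.
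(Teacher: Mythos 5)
Your proof is correct and takes essentially the same approach as the paper's: conditions (i), (iii), (iv) of Theorem~\ref{T4.5} are rewritten as lower bounds $W_{i+1}\le W_i$, $\underline{(W_{i+p}+(1,0))}_{[0,n+1]\times[0,n]}|_U\le W_i$, and $\underline{(W_{i+\alpha_i}+(1,-p^2+p\alpha_i))}_{[0,n+1]\times[-p^2+p\alpha_i,n]}|_U\le W_i$ via Lemma~\ref{L4.2}(i)$\Leftrightarrow$(iv), whose join is $X_i$, while condition (ii) becomes the upper bound $W_i\le Y_i$ via Lemma~\ref{L4.2}(i)$\Leftrightarrow$(v). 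The parameter choices and ambient rectangles you supply match the paper's implicit computation exactly.
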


\noindent{\bf Note}. In (\ref{X}), if $i+p>n$, the walk after the first $\vee$ is not defined; if $\alpha_i$ does not exist, the walk after the second $\vee$ is not defined. Our convention, here and later, is that any undefined walk in a $\vee$ or $\wedge$ operation is ignored. 

\begin{proof} The corollary is a restatement of Theorem~\ref{T4.5}
in terms of boundaries. In fact, conditions (i), (iii) and (iv) of Theorem~\ref{T4.5}
are equivalent to
\[
\begin{cases}
W_i\ge W_{i+1},\cr
W_i\ge\underline{\bigl(W_{i+p}+(1,0)\bigr)}_{[0,n+1]\times[0,n]}|_U,\cr
W_i\ge\underline{\bigl(W_{i+\alpha_i}+(1,-p^2+p\alpha_i)\bigr)}_{[0,n+1]\times[-p^2+p\alpha_i,n]}|_U.\cr
\end{cases}
\]
By Lemma~\ref{L4.2}, condition (ii) of Theorem~\ref{T4.5} is equivalent to
\[
W_i\le \overline{(W_{i+1})}_{[0,n]\times[-p,n]}\bigm|_{[0,n]\times[-p,-p+n]}+(0,p).
\]
\end{proof}

\begin{cor}\label{C4.8} {\em (Forward slicing)} Let $i$ be an integer with $0\le i\le n$.
Let $J_0,\dots,J_{i-1}$ be a forward consistent sequence of ideals of $U$ and let $J_i$ be an ideal of $U$. Put $W_j=\omega(J_j)$, $0\le j\le i$.
Let
\begin{equation}\label{X'}
X_i'=\underline{\bigl(W_{i-1}-(0,p)\bigr)}_{[0,n]\times[-p,n]}\bigm|_U
\end{equation}
and
\begin{equation}\label{Y'}
\begin{split}
Y_i'=\,&W_{i-1}\wedge
\bigl[\overline{(W_{i-p})}_{[0,n+1]\times[0,n]}\bigm|_{[1,n+1]\times[0,n]}-(1,0)
\bigr]\wedge\cr
&\bigl[\overline{(W_{i-\beta_i})}_{[0,n+1]\times[-p^2+p\beta_i,n]}\bigm|_
{[1,n+1]\times[-p^2+p\beta_i,-p^2+p\beta_i+n]}-(1,-p^2+p\beta_i)\bigr],\cr
\end{split}
\end{equation}
where $\beta_i$ is defined in Theorem~\ref{T4.6} {\rm (iv)}.
Then $J_i$ is consistent with $J_0,\dots,J_{i-1}$ if and only if 
\begin{equation}\label{XWY'}
X_i'\le W_i\le Y_i'.
\end{equation}
\end{cor}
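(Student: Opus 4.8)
The plan is to obtain Corollary~\ref{C4.8} from Theorem~\ref{T4.6} by transcribing each of its four ideal-theoretic conditions into an inequality between walks, in exactly the way Corollary~\ref{C4.7} was obtained from Theorem~\ref{T4.5}. The two facts I will use repeatedly are that, for ideals of $U$, $J\subset K$ is equivalent to $\omega(J)\le\omega(K)$, and that a containment of the shape $[\,I+D+(a,-b)\,]\cap U\subset I'$ with $a,b\ge0$ is translated into a walk inequality by Lemma~\ref{L4.2}: we use the equivalence (i)$\Leftrightarrow$(iv) when we want a lower bound on $\omega(I')$, and (i)$\Leftrightarrow$(v) when we want an upper bound on $\omega(I)$.

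First I would dispose of the two conditions of Theorem~\ref{T4.6} that involve $W_i$ on one side directly. Condition (i), $J_i\subset J_{i-1}$, reads $W_i\le W_{i-1}$, which is the first meet in $Y_i'$. Condition (ii), $[\,J_{i-1}+D-(0,p)\,]\cap U\subset J_i$, is Lemma~\ref{L4.2}(i) with $J=J_{i-1}$, $K=J_i$, $a=0$, $b=p$; since $b=p\ge0$ the lemma applies, and equivalence (i)$\Leftrightarrow$(iv) turns it into $\bigl[\underline{(W_{i-1}-(0,p))}_{[0,n]\times[-p,n]}\bigr]\bigm|_U\le W_i$, i.e. $X_i'\le W_i$. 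This is the unique lower bound on $W_i$ coming from the four conditions; the remaining two bound it from above.

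Next I would handle conditions (iii) and (iv) via Lemma~\ref{L4.2}(i)$\Leftrightarrow$(v). Condition (iii), $[\,J_i+D+(1,0)\,]\cap U\subset J_{i-p}$, is the lemma with $J=J_i$, $K=J_{i-p}$, $a=1$, $b=0$, giving $W_i+(1,0)\le\overline{(W_{i-p})}_{[0,n+1]\times[0,n]}\bigm|_{[1,n+1]\times[0,n]}$, equivalently $W_i\le\overline{(W_{i-p})}_{[0,n+1]\times[0,n]}\bigm|_{[1,n+1]\times[0,n]}-(1,0)$, the second meet in $Y_i'$. Condition (iv), with $\beta_i$ as in Theorem~\ref{T4.6}, is the lemma with $J=J_i$, $K=J_{i-\beta_i}$, $a=1$, $b=p^2-p\beta_i$; here $b=p^2-p\beta_i\ge p^2-p(p-1)=p>0$, so the lemma applies, and it yields $W_i\le\overline{(W_{i-\beta_i})}_{[0,n+1]\times[-p^2+p\beta_i,n]}\bigm|_{[1,n+1]\times[-p^2+p\beta_i,\,-p^2+p\beta_i+n]}-(1,-p^2+p\beta_i)$, the third meet in $Y_i'$. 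Collecting everything, conditions (i), (iii), (iv) are jointly equivalent to $W_i\le Y_i'$ and condition (ii) is equivalent to $X_i'\le W_i$, so by Theorem~\ref{T4.6} the sandwich $X_i'\le W_i\le Y_i'$ characterizes consistency of $J_i$ with $J_0,\dots,J_{i-1}$, which is (\ref{XWY'}). When $i-p<0$, condition (iii) is null and the second meet in $Y_i'$ is dropped; when $\beta_i$ does not exist, condition (iv) is null and the third meet is dropped; and when $i=0$ every term involved is undefined, so the convention on undefined walks in a $\vee$ or $\wedge$ makes the sandwich vacuous, matching the vacuous consistency requirement. This mirrors the proof of Corollary~\ref{C4.7}.

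The only place requiring care --- and the step I expect to be the main, if minor, obstacle --- is the bookkeeping in the two steps above: one must confirm that the shift vectors $(0,-p)$, $(1,0)$, $(1,-p^2+p\beta_i)$ are legitimate instances of the $(a,-b)$ with $a,b\ge0$ demanded by Lemma~\ref{L4.2} (the delicate one being $p^2-p\beta_i\ge0$, which holds because $\beta_i\le p-1$), and that the rectangles $[0,a+n]\times[-b,n]$ and $[a,a+n]\times[-b,-b+n]$ appearing in Lemma~\ref{L4.2}(iv),(v) specialize exactly to the rectangles written in (\ref{X'}) and (\ref{Y'}). No further subtlety arises, since the corollary is a direct restatement of Theorem~\ref{T4.6} in terms of boundaries.
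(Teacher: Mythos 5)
Your proof is correct and follows exactly the paper's own argument: condition (ii) of Theorem~\ref{T4.6} is translated to $X_i'\le W_i$ via Lemma~\ref{L4.2}(i)$\Leftrightarrow$(iv), and conditions (i), (iii), (iv) are translated to $W_i\le Y_i'$ via Lemma~\ref{L4.2}(i)$\Leftrightarrow$(v), with the same shift vectors and rectangles. The bookkeeping ($b=p^2-p\beta_i\ge p>0$, the specialization of the rectangles) is carried out correctly.
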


\begin{proof} The corollary is a restatement of Theorem~\ref{T4.6}
in terms of boundaries. By Lemma~\ref{L4.2}, conditions (i), (iii) and (iv) of Theorem~\ref{T4.6}
are equivalent to
\[
\begin{cases}
W_i\le W_{i-1},\cr
W_i\le\overline{(W_{i-p})}_{[0,n+1]\times[0,n]}\bigm|_{[1,n+1]\times[0,n]}-(1,0)
,\cr
W_i\le \overline{(W_{i-\beta_i})}_{[0,n+1]\times[-p^2+p\beta_i,n]}\bigm|_
{[1,n+1]\times[-p^2+p\beta_i,-p^2+p\beta_i+n]}-(1,-p^2+p\beta_i).\cr
\end{cases}
\]
Condition (ii) of Theorem~\ref{T4.6} is equivalent to
\[
W_i\ge \underline{\bigl(W_{i-1}-(0,p)\bigr)}_{[0,n]\times[-p,n]}\bigm|_U.
\]
\end{proof}

\begin{exmp}
\rm
(Backward slicing) 
Let $p=3$ and $m=12$ ($n=\frac m3(p-1)=8$). A backward consistent sequence 
of ideals $J_8,J_7,\dots,J_0$ is illustrated in Figure 16 through their
boundary walks $W_8, W_7,\dots,W_0$. When choosing walk $W_i$, we first determine the lower bound $X_i$ and the upper bound $Y_i$ defined in
Corollary~\ref{C4.7}. Figure 17 shows how $Y_1$ is determined and Figure 18
shows the procedure to find $X_1$. The ideal $I=\bigcup_{j=0}^8(J_j\times
\{j\})$ of ${\mathcal U}$ is depicted in Figure 19.

\vfill\eject

\vskip5mm
\setlength{\unitlength}{4mm}
\[
\begin{picture}(8,8)
\put(0,0){\line(1,0){8}}
\put(0,8){\line(1,0){8}}
\put(0,0){\line(0,1){8}}
\put(8,0){\line(0,1){8}}
\multiput(1,0)(1,0){7}{\line(0,1){0.1}}
\multiput(0,1)(0,1){7}{\line(1,0){0.1}}
\put(0,1){\makebox(0,0){$\scriptstyle \bullet$}}
\put(1,1){\makebox(0,0){$\scriptstyle \bullet$}}
\put(1,0){\makebox(0,0){$\scriptstyle \bullet$}}
\put(0,1){\line(1,0){1}}
\put(1,1){\line(0,-1){1}}
\put(4,-1){\makebox(0,0){$\scriptstyle W_8$}}

\end{picture}
\kern1cm
\begin{picture}(8,8)
\put(0,0){\line(1,0){8}}
\put(0,8){\line(1,0){8}}
\put(0,0){\line(0,1){8}}
\put(8,0){\line(0,1){8}}
\multiput(1,0)(1,0){7}{\line(0,1){0.1}}
\multiput(0,1)(0,1){7}{\line(1,0){0.1}}
\put(0,1){\makebox(0,0){$\scriptstyle \times$}}
\put(1,1){\makebox(0,0){$\scriptstyle \times$}}
\put(1,0){\makebox(0,0){$\scriptstyle \times$}}
\put(0,3){\makebox(0,0){$\scriptstyle \bullet$}}
\put(0,2){\makebox(0,0){$\scriptstyle \bullet$}}
\put(1,2){\makebox(0,0){$\scriptscriptstyle \bullet$}}
\put(1,1){\makebox(0,0){$\scriptstyle \bullet$}}
\put(2,1){\makebox(0,0){$\scriptstyle \bullet$}}
\put(2,0){\makebox(0,0){$\scriptstyle \bullet$}}
\put(0,4){\makebox(0,0){$\scriptstyle \circ$}}
\put(1,4){\makebox(0,0){$\scriptstyle \circ$}}
\put(1,2){\makebox(0,0){{\large $\circ$}}}
\put(4,2){\makebox(0,0){$\scriptstyle \circ$}}
\put(4,1){\makebox(0,0){$\scriptstyle \circ$}}
\put(7,1){\makebox(0,0){$\scriptstyle \circ$}}
\put(7,0){\makebox(0,0){$\scriptstyle \circ$}}
\put(8,0){\makebox(0,0){$\scriptstyle \circ$}}
\put(0,4){\line(1,0){1}}
\put(1,4){\line(0,-1){4}}
\put(0,2){\line(1,0){4}}
\put(4,2){\line(0,-1){1}}
\put(4,1){\line(1,0){3}}
\put(7,1){\line(0,-1){1}}
\put(0,1){\line(1,0){2}}
\put(2,1){\line(0,-1){1}}
\put(4,-1){\makebox(0,0){$\scriptstyle W_7,\ \alpha_7=1$}}

\end{picture}
\kern1cm
\begin{picture}(8,8)
\put(0,0){\line(1,0){8}}
\put(0,8){\line(1,0){8}}
\put(0,0){\line(0,1){8}}
\put(8,0){\line(0,1){8}}
\multiput(1,0)(1,0){7}{\line(0,1){0.1}}
\multiput(0,1)(0,1){7}{\line(1,0){0.1}}
\put(0,3){\makebox(0,0){$\scriptstyle \times$}}
\put(0,2){\makebox(0,0){$\scriptstyle \times$}}
\put(1,2){\makebox(0,0){$\scriptstyle \times$}}
\put(1,1){\makebox(0,0){$\scriptstyle \times$}}
\put(2,1){\makebox(0,0){$\scriptstyle \times$}}
\put(2,0){\makebox(0,0){$\scriptstyle \times$}}
\put(0,4){\makebox(0,0){$\scriptstyle \bullet$}}
\put(1,4){\makebox(0,0){$\scriptscriptstyle \bullet$}}
\put(1,2){\makebox(0,0){$\scriptstyle \bullet$}}
\put(3,2){\makebox(0,0){$\scriptscriptstyle \bullet$}}
\put(3,1){\makebox(0,0){$\scriptstyle \bullet$}}
\put(4,1){\makebox(0,0){$\scriptstyle \bullet$}}
\put(4,0){\makebox(0,0){$\scriptstyle \bullet$}}
\put(6,0){\makebox(0,0){$\scriptstyle \bullet$}}
\put(0,6){\makebox(0,0){$\scriptstyle \circ$}}
\put(0,5){\makebox(0,0){$\scriptstyle \circ$}}
\put(1,5){\makebox(0,0){$\scriptstyle \circ$}}
\put(1,4){\makebox(0,0){{\large $\circ$}}}
\put(2,4){\makebox(0,0){$\scriptstyle \circ$}}
\put(2,2){\makebox(0,0){$\scriptstyle \circ$}}
\put(5,2){\makebox(0,0){$\scriptstyle \circ$}}
\put(5,1){\makebox(0,0){$\scriptstyle \circ$}}
\put(8,1){\makebox(0,0){$\scriptstyle \circ$}}

\put(0,5){\line(1,0){1}}
\put(1,5){\line(0,-1){4}}
\put(1,1){\line(1,0){1}}
\put(2,1){\line(0,-1){1}}
\put(0,4){\line(1,0){2}}
\put(2,4){\line(0,-1){2}}
\put(0,2){\line(1,0){5}}
\put(5,2){\line(0,-1){1}}
\put(5,1){\line(1,0){3}}
\put(3,2){\line(0,-1){1}}
\put(3,1){\line(1,0){1}}
\put(4,1){\line(0,-1){1}}
\put(4,-1){\makebox(0,0){$\scriptstyle W_6,\ \alpha_6=2$}}

\end{picture}
\]
\vskip5mm

\[
\begin{picture}(8,8)
\put(0,0){\line(1,0){8}}
\put(0,8){\line(1,0){8}}
\put(0,0){\line(0,1){8}}
\put(8,0){\line(0,1){8}}
\multiput(1,0)(1,0){7}{\line(0,1){0.1}}
\multiput(0,1)(0,1){7}{\line(1,0){0.1}}

\put(0,4){\makebox(0,0){$\scriptstyle \times$}}
\put(1,4){\makebox(0,0){$\scriptstyle \times$}}
\put(1,2){\makebox(0,0){$\scriptstyle \times$}}
\put(3,2){\makebox(0,0){$\scriptstyle \times$}}
\put(3,1){\makebox(0,0){$\scriptstyle \times$}}
\put(4,1){\makebox(0,0){$\scriptstyle \times$}}
\put(4,0){\makebox(0,0){$\scriptstyle \times$}}
\put(6,0){\makebox(0,0){$\scriptstyle \times$}}
\put(0,4){\makebox(0,0){$\scriptstyle \bullet$}}
\put(2,4){\makebox(0,0){$\scriptstyle \bullet$}}
\put(2,3){\makebox(0,0){$\scriptstyle \bullet$}}
\put(4,3){\makebox(0,0){$\scriptscriptstyle \bullet$}}
\put(4,2){\makebox(0,0){$\scriptstyle \bullet$}}
\put(5,2){\makebox(0,0){$\scriptstyle \bullet$}}
\put(5,1){\makebox(0,0){$\scriptstyle \bullet$}}
\put(8,1){\makebox(0,0){$\scriptstyle \bullet$}}
\put(0,7){\makebox(0,0){$\scriptstyle \circ$}}
\put(1,7){\makebox(0,0){$\scriptstyle \circ$}}
\put(1,5){\makebox(0,0){$\scriptstyle \circ$}}
\put(3,5){\makebox(0,0){$\scriptstyle \circ$}}
\put(3,4){\makebox(0,0){$\scriptstyle \circ$}}
\put(4,4){\makebox(0,0){$\scriptstyle \circ$}}
\put(4,3){\makebox(0,0){$\circ$}}
\put(6,3){\makebox(0,0){$\scriptstyle \circ$}}
\put(6,2){\makebox(0,0){$\scriptstyle \circ$}}
\put(8,2){\makebox(0,0){$\scriptstyle \circ$}}

\put(0,7){\line(1,0){1}}
\put(1,7){\line(0,-1){2}}
\put(1,5){\line(1,0){2}}
\put(3,5){\line(0,-1){1}}
\put(3,4){\line(1,0){1}}
\put(4,4){\line(0,-1){2}}
\put(4,2){\line(1,0){1}}
\put(5,2){\line(0,-1){1}}
\put(5,1){\line(1,0){3}}
\put(1,4){\line(0,-1){2}}
\put(1,2){\line(1,0){2}}
\put(3,2){\line(0,-1){1}}
\put(3,1){\line(1,0){1}}
\put(4,1){\line(0,-1){1}}
\put(0,4){\line(1,0){2}}
\put(2,4){\line(0,-1){1}}
\put(2,3){\line(1,0){4}}
\put(6,3){\line(0,-1){1}}
\put(6,2){\line(1,0){2}}
\put(4,-1){\makebox(0,0){$\scriptstyle W_5,\ \alpha_5=2$}}

\end{picture}
\kern1cm
\begin{picture}(8,8)
\put(0,0){\line(1,0){8}}
\put(0,8){\line(1,0){8}}
\put(0,0){\line(0,1){8}}
\put(8,0){\line(0,1){8}}
\multiput(1,0)(1,0){7}{\line(0,1){0.1}}
\multiput(0,1)(0,1){7}{\line(1,0){0.1}}

\put(0,4){\makebox(0,0){$\scriptstyle \times$}}
\put(2,4){\makebox(0,0){$\scriptstyle \times$}}
\put(2,3){\makebox(0,0){$\scriptstyle \times$}}
\put(4,3){\makebox(0,0){$\scriptstyle \times$}}
\put(4,2){\makebox(0,0){$\scriptstyle \times$}}
\put(5,2){\makebox(0,0){$\scriptstyle \times$}}
\put(5,1){\makebox(0,0){$\scriptstyle \times$}}
\put(8,1){\makebox(0,0){$\scriptstyle \times$}}
\put(0,6){\makebox(0,0){$\scriptstyle \bullet$}}
\put(0,5){\makebox(0,0){$\scriptstyle \bullet$}}
\put(3,5){\makebox(0,0){$\scriptstyle \bullet$}}
\put(3,4){\makebox(0,0){$\scriptstyle \bullet$}}
\put(5,4){\makebox(0,0){$\scriptscriptstyle \bullet$}}
\put(5,3){\makebox(0,0){$\scriptstyle \bullet$}}
\put(8,3){\makebox(0,0){$\scriptstyle \bullet$}}
\put(0,7){\makebox(0,0){$\scriptstyle \circ$}}
\put(2,7){\makebox(0,0){$\scriptstyle \circ$}}
\put(2,6){\makebox(0,0){$\scriptstyle \circ$}}
\put(4,6){\makebox(0,0){$\scriptstyle \circ$}}
\put(4,5){\makebox(0,0){$\scriptstyle \circ$}}
\put(5,5){\makebox(0,0){$\scriptstyle \circ$}}
\put(5,4){\makebox(0,0){{\large $\circ$}}}
\put(8,4){\makebox(0,0){$\scriptstyle \circ$}}

\put(0,7){\line(1,0){2}}
\put(2,7){\line(0,-1){1}}
\put(2,6){\line(1,0){2}}
\put(4,6){\line(0,-1){1}}
\put(4,5){\line(1,0){1}}
\put(5,5){\line(0,-1){2}}
\put(5,3){\line(1,0){3}}
\put(0,5){\line(1,0){3}}
\put(3,5){\line(0,-1){1}}
\put(3,4){\line(1,0){5}}
\put(0,4){\line(1,0){2}}
\put(2,4){\line(0,-1){1}}
\put(2,3){\line(1,0){2}}
\put(4,3){\line(0,-1){1}}
\put(4,2){\line(1,0){1}}
\put(5,2){\line(0,-1){1}}
\put(5,1){\line(1,0){3}}
\put(4,-1){\makebox(0,0){$\scriptstyle W_4,\ \alpha_4=2$}}

\end{picture}
\kern1cm
\begin{picture}(8,8)
\put(0,0){\line(1,0){8}}
\put(0,8){\line(1,0){8}}
\put(0,0){\line(0,1){8}}
\put(8,0){\line(0,1){8}}
\multiput(1,0)(1,0){7}{\line(0,1){0.1}}
\multiput(0,1)(0,1){7}{\line(1,0){0.1}}

\put(0,6){\makebox(0,0){$\scriptstyle \times$}}
\put(0,5){\makebox(0,0){$\scriptstyle \times$}}
\put(3,5){\makebox(0,0){$\scriptstyle \times$}}
\put(3,4){\makebox(0,0){$\scriptstyle \times$}}
\put(5,4){\makebox(0,0){$\scriptstyle \times$}}
\put(5,3){\makebox(0,0){$\scriptstyle \times$}}
\put(8,3){\makebox(0,0){$\scriptstyle \times$}}
\put(0,6){\makebox(0,0){$\scriptstyle \bullet$}}
\put(0,5){\makebox(0,0){$\scriptstyle \bullet$}}
\put(3,5){\makebox(0,0){$\scriptstyle \bullet$}}
\put(3,4){\makebox(0,0){$\scriptstyle \bullet$}}
\put(5,4){\makebox(0,0){$\scriptstyle \bullet$}}
\put(5,3){\makebox(0,0){$\scriptstyle \bullet$}}
\put(8,3){\makebox(0,0){$\scriptstyle \bullet$}}
\put(3,8){\makebox(0,0){$\scriptstyle \circ$}}
\put(3,7){\makebox(0,0){$\scriptstyle \circ$}}
\put(5,7){\makebox(0,0){$\scriptstyle \circ$}}
\put(5,6){\makebox(0,0){$\scriptstyle \circ$}}
\put(8,6){\makebox(0,0){$\scriptstyle \circ$}}

\put(3,8){\line(0,-1){1}}
\put(3,7){\line(1,0){2}}
\put(5,7){\line(0,-1){1}}
\put(5,6){\line(1,0){3}}
\put(0,5){\line(1,0){3}}
\put(3,5){\line(0,-1){1}}
\put(3,4){\line(1,0){2}}
\put(5,4){\line(0,-1){1}}
\put(5,3){\line(1,0){3}}
\put(4,-1){\makebox(0,0){$\scriptstyle W_3,\ \alpha_3=2$}}

\end{picture}
\]
\vskip5mm

\[
\begin{picture}(8,8)
\put(0,0){\line(1,0){8}}
\put(0,8){\line(1,0){8}}
\put(0,0){\line(0,1){8}}
\put(8,0){\line(0,1){8}}
\multiput(1,0)(1,0){7}{\line(0,1){0.1}}
\multiput(0,1)(0,1){7}{\line(1,0){0.1}}

\put(0,6){\makebox(0,0){$\scriptstyle \times$}}
\put(0,5){\makebox(0,0){$\scriptstyle \times$}}
\put(3,5){\makebox(0,0){$\scriptstyle \times$}}
\put(3,4){\makebox(0,0){$\scriptstyle \times$}}
\put(5,4){\makebox(0,0){$\scriptstyle \times$}}
\put(5,3){\makebox(0,0){$\scriptstyle \times$}}
\put(8,3){\makebox(0,0){$\scriptstyle \times$}}
\put(1,8){\makebox(0,0){$\scriptstyle \bullet$}}
\put(1,6){\makebox(0,0){$\scriptstyle \bullet$}}
\put(3,6){\makebox(0,0){$\scriptstyle \bullet$}}
\put(3,4){\makebox(0,0){$\scriptstyle \bullet$}}
\put(5,4){\makebox(0,0){$\scriptstyle \bullet$}}
\put(5,3){\makebox(0,0){$\scriptstyle \bullet$}}
\put(8,3){\makebox(0,0){$\scriptstyle \bullet$}}
\put(3,8){\makebox(0,0){$\scriptstyle \circ$}}
\put(3,7){\makebox(0,0){$\scriptstyle \circ$}}
\put(5,7){\makebox(0,0){$\scriptstyle \circ$}}
\put(5,6){\makebox(0,0){$\scriptstyle \circ$}}
\put(8,6){\makebox(0,0){$\scriptstyle \circ$}}

\put(3,8){\line(0,-1){1}}
\put(3,7){\line(1,0){2}}
\put(5,7){\line(0,-1){1}}
\put(5,6){\line(1,0){3}}
\put(1,8){\line(0,-1){2}}
\put(1,6){\line(1,0){2}}
\put(3,6){\line(0,-1){2}}
\put(3,4){\line(1,0){2}}
\put(5,4){\line(0,-1){1}}
\put(5,3){\line(1,0){3}}
\put(0,5){\line(1,0){3}}
\put(4,-1){\makebox(0,0){$\scriptstyle W_2,\ \alpha_2=2$}}

\end{picture}
\kern1cm
\begin{picture}(8,8)
\put(0,0){\line(1,0){8}}
\put(0,8){\line(1,0){8}}
\put(0,0){\line(0,1){8}}
\put(8,0){\line(0,1){8}}
\multiput(1,0)(1,0){7}{\line(0,1){0.1}}
\multiput(0,1)(0,1){7}{\line(1,0){0.1}}

\put(1,8){\makebox(0,0){$\scriptstyle \times$}}
\put(1,6){\makebox(0,0){$\scriptstyle \times$}}
\put(3,6){\makebox(0,0){$\scriptstyle \times$}}
\put(3,5){\makebox(0,0){$\scriptstyle \times$}}
\put(4,5){\makebox(0,0){$\scriptstyle \times$}}
\put(4,4){\makebox(0,0){$\scriptstyle \times$}}
\put(6,4){\makebox(0,0){$\scriptstyle \times$}}
\put(6,3){\makebox(0,0){$\scriptstyle \times$}}
\put(8,3){\makebox(0,0){$\scriptstyle \times$}}
\put(2,8){\makebox(0,0){$\scriptstyle \bullet$}}
\put(2,7){\makebox(0,0){$\scriptstyle \bullet$}}
\put(4,7){\makebox(0,0){$\scriptstyle \bullet$}}
\put(4,6){\makebox(0,0){$\scriptstyle \bullet$}}
\put(7,6){\makebox(0,0){$\scriptstyle \bullet$}}
\put(7,3){\makebox(0,0){$\scriptstyle \bullet$}}
\put(8,3){\makebox(0,0){$\scriptstyle \bullet$}}
\put(3,8){\makebox(0,0){$\scriptstyle \circ$}}
\put(3,7){\makebox(0,0){$\scriptstyle \circ$}}
\put(5,7){\makebox(0,0){$\scriptstyle \circ$}}
\put(5,6){\makebox(0,0){$\scriptstyle \circ$}}
\put(8,6){\makebox(0,0){$\scriptstyle \circ$}}

\put(1,8){\line(0,-1){2}}
\put(1,6){\line(1,0){2}}
\put(3,6){\line(0,-1){1}}
\put(3,5){\line(1,0){1}}
\put(4,5){\line(0,-1){1}}
\put(4,4){\line(1,0){2}}
\put(6,4){\line(0,-1){1}}
\put(6,3){\line(1,0){2}}
\put(2,8){\line(0,-1){1}}
\put(2,7){\line(1,0){3}}
\put(5,7){\line(0,-1){1}}
\put(4,6){\line(1,0){4}}
\put(7,6){\line(0,-1){3}}
\put(3,8){\line(0,-1){1}}
\put(4,7){\line(0,-1){1}}
\put(4,-1){\makebox(0,0){$\scriptstyle W_1,\ \alpha_1=2$}}

\end{picture}
\kern1cm
\begin{picture}(8,8)
\put(0,0){\line(1,0){8}}
\put(0,8){\line(1,0){8}}
\put(0,0){\line(0,1){8}}
\put(8,0){\line(0,1){8}}
\multiput(1,0)(1,0){7}{\line(0,1){0.1}}
\multiput(0,1)(0,1){7}{\line(1,0){0.1}}

\put(2,8){\makebox(0,0){$\scriptstyle \times$}}
\put(2,7){\makebox(0,0){$\scriptstyle \times$}}
\put(4,7){\makebox(0,0){$\scriptstyle \times$}}
\put(4,6){\makebox(0,0){$\scriptstyle \times$}}
\put(7,6){\makebox(0,0){$\scriptstyle \times$}}
\put(7,3){\makebox(0,0){$\scriptstyle \times$}}
\put(8,3){\makebox(0,0){$\scriptstyle \times$}}
\put(6,8){\makebox(0,0){$\scriptstyle \bullet$}}
\put(6,6){\makebox(0,0){$\scriptstyle \bullet$}}
\put(7,6){\makebox(0,0){$\scriptscriptstyle \bullet$}}
\put(7,5){\makebox(0,0){$\scriptstyle \bullet$}}
\put(8,5){\makebox(0,0){$\scriptstyle \bullet$}}
\put(7,8){\makebox(0,0){$\scriptstyle \circ$}}
\put(7,6){\makebox(0,0){{\large $\circ$}}}
\put(8,6){\makebox(0,0){$\scriptstyle \circ$}}

\put(2,8){\line(0,-1){1}}
\put(2,7){\line(1,0){2}}
\put(4,7){\line(0,-1){1}}
\put(4,6){\line(1,0){4}}
\put(6,8){\line(0,-1){2}}
\put(7,5){\line(1,0){1}}
\put(7,8){\line(0,-1){5}}
\put(7,3){\line(1,0){1}}
\put(4,-1){\makebox(0,0){$\scriptstyle W_0,\ \alpha_0=2$}}

\end{picture}
\]
\vskip5mm

\[
\begin{picture}(3,1)
\put(0,1){\makebox(0,0){$\scriptstyle W_i:$}}
\put(1,1){\makebox(0,0){$\scriptstyle \bullet$}}
\put(2,1){\makebox(0,0){$\scriptstyle \bullet$}}
\put(1,1){\line(1,0){1}}
\end{picture}
\kern 5mm
\begin{picture}(3,1)
\put(0,1){\makebox(0,0){$\scriptstyle X_i:$}}
\put(1,1){\makebox(0,0){$\scriptstyle \times$}}
\put(2,1){\makebox(0,0){$\scriptstyle \times$}}
\put(1,1){\line(1,0){1}}
\end{picture}
\kern 5mm
\begin{picture}(3,1)
\put(0,1){\makebox(0,0){$\scriptstyle Y_i:$}}
\put(1,1){\makebox(0,0){$\scriptstyle \circ$}}
\put(2,1){\makebox(0,0){$\scriptstyle \circ$}}
\put(1,1){\line(1,0){1}}
\end{picture}
\]

\[
\text{Figure 16. An example of backward slicing}
\]
\vfill\eject

\vskip5mm
\setlength{\unitlength}{4mm}
\[
\begin{picture}(12,14)
\put(0,4){\vector(1,0){12}}
\put(2,0){\vector(0,1){14}}
\multiput(1,4)(1,0){9}{\line(0,1){0.1}}
\multiput(2,2)(0,1){10}{\line(1,0){0.1}}

\put(3,12){\makebox(0,0){$\scriptstyle \circ$}}
\put(3,10){\makebox(0,0){$\scriptstyle \circ$}}
\put(5,10){\makebox(0,0){$\scriptstyle \circ$}}
\put(5,9){\makebox(0,0){$\scriptstyle \circ$}}
\put(5,8){\makebox(0,0){$\scriptstyle \circ$}}
\put(7,8){\makebox(0,0){$\scriptstyle \circ$}}
\put(7,7){\makebox(0,0){$\scriptstyle \circ$}}
\put(10,7){\makebox(0,0){$\scriptstyle \circ$}}

\put(2,12){\line(1,0){8}}
\put(10,12){\line(0,-1){11}}
\put(2,9){\line(1,0){8}}
\put(2,1){\line(1,0){8}}
\put(3,12){\line(0,-1){2}}
\put(3,10){\line(1,0){2}}
\put(5,10){\line(0,-1){2}}
\put(5,8){\line(1,0){2}}
\put(7,8){\line(0,-1){1}}
\put(7,7){\line(1,0){3}}

\put(3,12){\line(3,1){3}}
\put(10,7){\line(-2,3){4}}
\put(5,9){\line(3,1){6}}
\put(10,7){\line(1,4){1}}

\put(6,13){\makebox(0,0)[b]{$\scriptstyle W_2$}}
\put(11,11){\makebox(0,0)[l]{$\scriptstyle Y_1-(0,3)$}}

\end{picture}
\]

\[
\text{Figure 17. Determination of $Y_1$}
\]

\vskip5mm
\setlength{\unitlength}{4mm}
\[
\begin{picture}(9,9)
\put(0,0){\vector(1,0){9}}
\put(0,0){\vector(0,1){9}}
\multiput(1,0)(1,0){7}{\line(0,1){0.1}}
\multiput(0,1)(0,1){7}{\line(1,0){0.1}}
\put(0,8){\line(1,0){8}}
\put(8,0){\line(0,1){8}}

\put(1,8){\makebox(0,0){$\scriptstyle \times$}}
\put(1,6){\makebox(0,0){$\scriptstyle \times$}}
\put(3,6){\makebox(0,0){$\scriptstyle \times$}}
\put(3,4){\makebox(0,0){$\scriptstyle \times$}}
\put(5,4){\makebox(0,0){$\scriptstyle \times$}}
\put(5,3){\makebox(0,0){$\scriptstyle \times$}}
\put(8,3){\makebox(0,0){$\scriptstyle \times$}}

\put(1,8){\line(0,-1){2}}
\put(1,6){\line(1,0){2}}
\put(3,6){\line(0,-1){2}}
\put(3,4){\line(1,0){2}}
\put(5,4){\line(0,-1){1}}
\put(5,3){\line(1,0){3}}

\put(4.5,-1){\makebox(0,0){$\scriptstyle W_2$}}

\end{picture}
\kern2cm
\begin{picture}(10,9)
\put(0,0){\vector(1,0){10}}
\put(0,0){\vector(0,1){9}}
\multiput(1,0)(1,0){7}{\line(0,1){0.1}}
\multiput(0,1)(0,1){7}{\line(1,0){0.1}}
\put(0,8){\line(1,0){9}}
\put(1,0){\line(0,1){8}}
\put(8,0){\line(0,1){8}}
\put(9,0){\line(0,1){8}}
\put(-0.1,-0.1){\makebox(0,0)[tr]{$\scriptstyle 0$}}

\put(0,6){\makebox(0,0){$\scriptstyle \times$}}
\put(1,6){\makebox(0,0){$\scriptstyle \times$}}
\put(1,5){\makebox(0,0){$\scriptstyle \times$}}
\put(4,5){\makebox(0,0){$\scriptstyle \times$}}
\put(4,4){\makebox(0,0){$\scriptstyle \times$}}
\put(6,4){\makebox(0,0){$\scriptstyle \times$}}
\put(6,3){\makebox(0,0){$\scriptstyle \times$}}
\put(8,3){\makebox(0,0){$\scriptstyle \times$}}
\put(9,3){\makebox(0,0){$\scriptstyle \times$}}

\put(0,6){\line(1,0){1}}
\put(1,5){\line(1,0){3}}
\put(4,5){\line(0,-1){1}}
\put(4,4){\line(1,0){2}}
\put(6,4){\line(0,-1){1}}
\put(6,3){\line(1,0){3}}

\put(1,6){\line(1,1){3.6}}
\put(9,3){\line(-2,3){4.4}}
\put(0,6){\line(1,-2){3.7}}
\put(8,3){\line(-1,-1){4.3}}

\put(5,10){\makebox(0,0)[b]{$\scriptstyle W_4+(1,0)$}}
\put(3,-2){\makebox(0,0)[l]{$\scriptstyle B=\underline{(W_4+(1,0))}_{[0,9]\times[0,8]}|_U$}}

\end{picture}
\]
\vskip2mm
\[
\begin{picture}(10,12)
\put(0,3){\vector(1,0){10}}
\put(0,0){\vector(0,1){12}}
\multiput(1,3)(1,0){7}{\line(0,1){0.1}}
\multiput(0,1)(0,1){11}{\line(1,0){0.1}}
\put(-0.1,3){\makebox(0,0)[r]{$\scriptstyle 0$}}

\put(0,11){\line(1,0){9}}
\put(9,11){\line(0,-1){11}}
\put(1,8){\line(1,0){8}}
\put(1,8){\line(0,-1){8}}
\put(0,0){\line(1,0){9}}
\put(8,11){\line(0,-1){8}}

\put(0,6){\line(1,0){1}}
\put(1,5){\line(1,0){3}}
\put(4,5){\line(0,-1){1}}
\put(4,4){\line(1,0){2}}
\put(6,4){\line(0,-1){1}}

\put(1,6){\line(1,2){3.2}}
\put(9,3){\line(-1,2){4.7}}
\put(0,6){\line(1,-2){3.6}}
\put(8,3){\line(-1,-1){4.3}}

\put(0,6){\makebox(0,0){$\scriptstyle \times$}}
\put(1,6){\makebox(0,0){$\scriptstyle \times$}}
\put(1,5){\makebox(0,0){$\scriptstyle \times$}}
\put(4,5){\makebox(0,0){$\scriptstyle \times$}}
\put(4,4){\makebox(0,0){$\scriptstyle \times$}}
\put(6,4){\makebox(0,0){$\scriptstyle \times$}}
\put(6,3){\makebox(0,0){$\scriptstyle \times$}}
\put(8,3){\makebox(0,0){$\scriptstyle \times$}}
\put(9,3){\makebox(0,0){$\scriptstyle \times$}}

\put(4.5,13){\makebox(0,0){$\scriptstyle W_3+(1,-3)$}}
\put(3,-2){\makebox(0,0)[l]{$\scriptstyle C=\underline{(W_3+(1,-3))}_{[0,9]\times[-3,8]}|_U$}}

\end{picture}
\kern2cm
\begin{picture}(9,9)
\put(0,0){\vector(1,0){9}}
\put(0,0){\vector(0,1){9}}
\multiput(1,0)(1,0){7}{\line(0,1){0.1}}
\multiput(0,1)(0,1){7}{\line(1,0){0.1}}
\put(0,8){\line(1,0){8}}
\put(8,0){\line(0,1){8}}

\put(1,8){\makebox(0,0){$\scriptstyle \times$}}
\put(1,6){\makebox(0,0){$\scriptstyle \times$}}
\put(3,6){\makebox(0,0){$\scriptstyle \times$}}
\put(3,5){\makebox(0,0){$\scriptstyle \times$}}
\put(4,5){\makebox(0,0){$\scriptstyle \times$}}
\put(4,4){\makebox(0,0){$\scriptstyle \times$}}
\put(6,4){\makebox(0,0){$\scriptstyle \times$}}
\put(6,3){\makebox(0,0){$\scriptstyle \times$}}
\put(8,3){\makebox(0,0){$\scriptstyle \times$}}

\put(1,8){\line(0,-1){2}}
\put(1,6){\line(1,0){2}}
\put(3,6){\line(0,-1){1}}
\put(3,5){\line(1,0){1}}
\put(4,5){\line(0,-1){1}}
\put(4,4){\line(1,0){2}}
\put(6,4){\line(0,-1){1}}
\put(6,3){\line(1,0){2}}

\put(4.5,-1){\makebox(0,0){$\scriptstyle X_1=W_2\vee B\vee C$}}

\end{picture}
\]
\vskip5mm

\[
\text{Figure 18. Detremination of $X_1$}
\]

\vskip5mm
\setlength{\unitlength}{8mm}
\[
\begin{picture}(13,13)
\put(12,4){\vector(1,0){1}}
\put(4,12){\vector(0,1){1}}
\put(0,0){\vector(-1,-1){0.5}}
\put(13,3.8){\makebox(0,0)[t]{$\scriptstyle x$}}
\put(4.1,13){\makebox(0,0)[l]{$\scriptstyle y$}}
\put(-0.5,-0.3){\makebox(0,0)[b]{$\scriptstyle z$}}
\put(12,3.8){\makebox(0,0)[t]{$\scriptstyle 8$}}
\put(4.1,12.2){\makebox(0,0)[bl]{$\scriptstyle 8$}}
\put(-0.1,0){\makebox(0,0)[br]{$\scriptstyle 8$}}
\put(4,12){\line(1,0){6}}
\multiput(6,11)(-0.5,-0.5){2}{\line(1,0){2}}
\multiput(8,10)(-0.5,-0.5){2}{\line(1,0){3}}
\multiput(11,9)(0,-2){2}{\line(1,0){1}}
\multiput(4.5,9.5)(-0.5,-0.5){2}{\line(1,0){2}}
\multiput(4.5,11.5)(-1.5,-0.5){2}{\line(1,0){1}}
\multiput(6.5,7.5)(-1.5,-1.5){2}{\line(1,0){2}}
\put(8.5,6.5){\line(1,0){2}}
\multiput(3,8)(-1,-1){2}{\line(1,0){3}}
\put(7,5){\line(1,0){3}}
\put(2,6){\line(1,0){2}}
\put(2.5,5.5){\line(1,0){1}}
\multiput(4,5)(-0.5,-0.5){2}{\line(1,0){2}}
\multiput(6,4)(-0.5,-0.5){2}{\line(1,0){1}}
\multiput(7,3)(-0.5,-0.5){2}{\line(1,0){3}}
\multiput(1,5)(0,-2){2}{\line(1,0){1}}
\multiput(0.5,2.5)(0,-1){2}{\line(1,0){1}}
\multiput(0,1)(0,-1){2}{\line(1,0){1}}
\multiput(1.5,1.5)(0,-1){2}{\line(1,0){1}}
\put(2,2){\line(1,0){1}}
\put(3,1){\line(1,0){4}}
\put(5.5,1.5){\line(1,0){4}}
\multiput(4.5,2.5)(-0.5,-0.5){2}{\line(1,0){1}}
\multiput(2.5,3.5)(-0.5,-0.5){2}{\line(1,0){2}}

\put(4,12){\line(-1,-1){1}}
\multiput(6,12)(0,-1){2}{\line(-1,-1){0.5}}
\multiput(8,11)(0,-1){2}{\line(-1,-1){0.5}}
\multiput(11,10)(0,-3){2}{\line(-1,-1){0.5}}
\multiput(4.5,11.5)(0,-2){2}{\line(-1,-1){0.5}}
\put(6,9){\line(1,1){0.5}}
\multiput(3,9)(0,-1){2}{\line(-1,-1){1}}
\put(6,8){\line(-1,-1){1}}
\multiput(6.5,7.5)(2,0){2}{\line(-1,-1){1.5}}
\put(8.5,6.5){\line(-1,-1){1.5}}
\put(12,7){\line(-1,-1){2}}
\multiput(4,6)(0,-1){2}{\line(-1,-1){0.5}}
\multiput(6,5)(0,-1){2}{\line(-1,-1){0.5}}
\multiput(7,4)(0,-1){2}{\line(-1,-1){0.5}}
\put(10,3){\line(-1,-1){0.5}}
\put(12,4){\line(-1,-1){2.5}}
\put(2,6){\line(-1,-1){1}}
\multiput(1,4)(0,-1){2}{\line(-1,-1){0.5}}
\multiput(3,2)(0,-1){2}{\line(-1,-1){0.5}}
\multiput(2.5,3.5)(-0.5,-1.5){2}{\line(-1,-1){1}}
\multiput(0.5,1.5)(1,-1){2}{\line(-1,-1){0.5}}
\put(2.5,5.5){\line(-1,-1){0.5}}
\multiput(4.5,3.5)(0,-1){2}{\line(-1,-1){0.5}}
\multiput(5.5,2.5)(0,-1){2}{\line(-1,-1){0.5}}
\put(7.5,1.5){\line(-1,-1){0.5}}

\multiput(6,12)(-0.5,-0.5){2}{\line(0,-1){1}}
\multiput(8,11)(-0.5,-0.5){2}{\line(0,-1){1}}
\multiput(11,10)(-0.5,-0.5){2}{\line(0,-1){3}}
\multiput(4.5,11.5)(-0.5,-0.5){2}{\line(0,-1){2}}
\put(10,12){\line(0,-1){2}}
\put(12,9){\line(0,-1){5}}
\put(3,11){\line(0,-1){3}}
\put(6,9){\line(0,-1){1}}
\put(6.5,9.5){\line(0,-1){2}}
\put(2,8){\line(0,-1){2}}
\put(5,7){\line(0,-1){1}}
\multiput(8.5,7.5)(-1.5,-1.5){2}{\line(0,-1){1}}
\multiput(4,6)(-0.5,-0.5){2}{\line(0,-1){1}}
\multiput(6,5)(-0.5,-0.5){2}{\line(0,-1){1}}
\multiput(7,4)(-0.5,-0.5){2}{\line(0,-1){1}}
\put(10,5){\line(0,-1){2}}
\put(9.5,2.5){\line(0,-1){1}}
\put(1,5){\line(0,-1){2}}
\put(2,5){\line(0,-1){3}}
\put(0.5,3.5){\line(0,-1){2}}
\put(1.5,2.5){\line(0,-1){2}}
\multiput(0,1)(1,0){2}{\line(0,-1){1}}
\multiput(3,2)(-0.5,-0.5){2}{\line(0,-1){1}}
\multiput(4.5,3.5)(-0.5,-0.5){2}{\line(0,-1){1}}
\multiput(5.5,2.5)(-0.5,-0.5){2}{\line(0,-1){1}}
\put(2.5,5.5){\line(0,-1){2}}

\end{picture}
\]
\vskip5mm

\[
\text{Figure 19. The ideal $\textstyle I=\bigcup_{j=0}^8\bigl(J_j\times\{j\}\bigr)$}
\]
\vskip5mm

\end{exmp}

\begin{exmp}
\rm (Forward slicing)
Let $p=3$ and $m=9$ ($n=\frac m3(p-1)=6$). A sequence 
of  walks $W_0, W_1,\dots,W_6$ satisfying (\ref{XWY'}) is given in Figure 20.  The resulting ideal $I=\bigcup_{j=0}^6(\iota(W_j)\times
\{j\})$ of ${\mathcal U}$ is illustrated in Figure 21.
\end{exmp}

\vfill\eject

\vskip5mm
\setlength{\unitlength}{4mm}
\[
\begin{picture}(6,6)
\put(0,0){\line(1,0){6}}
\put(0,6){\line(1,0){6}}
\put(0,0){\line(0,1){6}}
\put(6,0){\line(0,1){6}}
\multiput(1,0)(1,0){5}{\line(0,1){0.1}}
\multiput(0,1)(0,1){5}{\line(1,0){0.1}}
\put(5,6){\makebox(0,0){$\scriptstyle \bullet$}}
\put(5,1){\makebox(0,0){$\scriptstyle \bullet$}}
\put(6,1){\makebox(0,0){$\scriptstyle \bullet$}}
\put(5,6){\line(0,-1){5}}
\put(5,1){\line(1,0){1}}
\put(3,-1){\makebox(0,0){$\scriptstyle W_0$}}

\end{picture}
\kern1cm
\begin{picture}(6,6)
\put(0,0){\line(1,0){6}}
\put(0,6){\line(1,0){6}}
\put(0,0){\line(0,1){6}}
\put(6,0){\line(0,1){6}}
\multiput(1,0)(1,0){5}{\line(0,1){0.1}}
\multiput(0,1)(0,1){5}{\line(1,0){0.1}}

\put(0,4){\makebox(0,0){$\scriptstyle \times$}}
\put(2,4){\makebox(0,0){$\scriptstyle \times$}}
\put(2,3){\makebox(0,0){$\scriptstyle \times$}}
\put(5,3){\makebox(0,0){$\scriptstyle \times$}}
\put(5,0){\makebox(0,0){$\scriptstyle \times$}}
\put(5,6){\makebox(0,0){$\scriptscriptstyle \bullet$}}
\put(5,0){\makebox(0,0){$\scriptstyle \bullet$}}
\put(5,6){\makebox(0,0){{\large $\circ$}}}
\put(5,1){\makebox(0,0){$\scriptstyle \circ$}}
\put(6,1){\makebox(0,0){$\scriptstyle \circ$}}

\put(5,6){\line(0,-1){6}}
\put(5,1){\line(1,0){1}}
\put(0,4){\line(1,0){2}}
\put(2,4){\line(0,-1){1}}
\put(2,3){\line(1,0){3}}
\put(3,-1){\makebox(0,0){$\scriptstyle W_1,\ \beta_1=1$}}

\end{picture}
\kern1cm
\begin{picture}(6,6)
\put(0,0){\line(1,0){6}}
\put(0,6){\line(1,0){6}}
\put(0,0){\line(0,1){6}}
\put(6,0){\line(0,1){6}}
\multiput(1,0)(1,0){5}{\line(0,1){0.1}}
\multiput(0,1)(0,1){5}{\line(1,0){0.1}}

\put(0,4){\makebox(0,0){$\scriptstyle \times$}}
\put(2,4){\makebox(0,0){$\scriptstyle \times$}}
\put(2,3){\makebox(0,0){$\scriptstyle \times$}}
\put(5,3){\makebox(0,0){$\scriptstyle \times$}}
\put(5,0){\makebox(0,0){$\scriptstyle \times$}}
\put(3,6){\makebox(0,0){$\scriptstyle \bullet$}}
\put(3,3){\makebox(0,0){$\scriptstyle \bullet$}}
\put(5,3){\makebox(0,0){$\scriptscriptstyle \bullet$}}
\put(5,0){\makebox(0,0){$\scriptscriptstyle \bullet$}}
\put(4,6){\makebox(0,0){$\scriptstyle \circ$}}
\put(4,4){\makebox(0,0){$\scriptstyle \circ$}}
\put(5,4){\makebox(0,0){$\scriptstyle \circ$}}
\put(5,0){\makebox(0,0){{\large $\circ$}}}

\put(0,4){\line(1,0){2}}
\put(2,4){\line(0,-1){1}}
\put(2,3){\line(1,0){3}}
\put(5,4){\line(0,-1){4}}
\put(4,4){\line(1,0){1}}
\put(4,6){\line(0,-1){2}}
\put(3,6){\line(0,-1){3}}
\put(3,-1){\makebox(0,0){$\scriptstyle W_2,\ \beta_2=2$}}

\end{picture}
\kern1cm
\begin{picture}(6,6)
\put(0,0){\line(1,0){6}}
\put(0,6){\line(1,0){6}}
\put(0,0){\line(0,1){6}}
\put(6,0){\line(0,1){6}}
\multiput(1,0)(1,0){5}{\line(0,1){0.1}}
\multiput(0,1)(0,1){5}{\line(1,0){0.1}}

\put(0,4){\makebox(0,0){$\scriptstyle \times$}}
\put(0,3){\makebox(0,0){$\scriptstyle \times$}}
\put(3,3){\makebox(0,0){$\scriptstyle \times$}}
\put(3,0){\makebox(0,0){$\scriptstyle \times$}}
\put(5,0){\makebox(0,0){$\scriptstyle \times$}}
\put(0,5){\makebox(0,0){$\scriptstyle \bullet$}}
\put(1,5){\makebox(0,0){$\scriptstyle \bullet$}}
\put(1,4){\makebox(0,0){$\scriptstyle \bullet$}}
\put(3,4){\makebox(0,0){$\scriptstyle \bullet$}}
\put(3,1){\makebox(0,0){$\scriptstyle \bullet$}}
\put(4,1){\makebox(0,0){$\scriptscriptstyle \bullet$}}
\put(4,0){\makebox(0,0){$\scriptstyle \bullet$}}
\put(3,6){\makebox(0,0){$\scriptstyle \circ$}}
\put(3,3){\makebox(0,0){{\large $\circ$}}}
\put(4,3){\makebox(0,0){$\scriptstyle \circ$}}
\put(4,1){\makebox(0,0){{\large $\circ$}}}
\put(5,1){\makebox(0,0){$\scriptstyle \circ$}}
\put(5,0){\makebox(0,0){{\large $\circ$}}}

\put(0,5){\line(1,0){1}}
\put(1,5){\line(0,-1){1}}
\put(1,4){\line(1,0){2}}
\put(3,6){\line(0,-1){6}}
\put(0,3){\line(1,0){4}}
\put(4,3){\line(0,-1){3}}
\put(3,1){\line(1,0){2}}
\put(5,1){\line(0,-1){1}}
\put(3,-1){\makebox(0,0){$\scriptstyle W_3,\ \beta_3=2$}}

\end{picture}
\]

\[
\begin{picture}(6,6)
\put(0,0){\line(1,0){6}}
\put(0,6){\line(1,0){6}}
\put(0,0){\line(0,1){6}}
\put(6,0){\line(0,1){6}}
\multiput(1,0)(1,0){5}{\line(0,1){0.1}}
\multiput(0,1)(0,1){5}{\line(1,0){0.1}}

\put(0,2){\makebox(0,0){$\scriptstyle \times$}}
\put(1,2){\makebox(0,0){$\scriptstyle \times$}}
\put(1,1){\makebox(0,0){$\scriptstyle \times$}}
\put(3,1){\makebox(0,0){$\scriptstyle \times$}}
\put(3,0){\makebox(0,0){$\scriptstyle \times$}}
\put(0,3){\makebox(0,0){$\scriptstyle \bullet$}}
\put(2,3){\makebox(0,0){$\scriptstyle \bullet$}}
\put(2,1){\makebox(0,0){$\scriptstyle \bullet$}}
\put(3,1){\makebox(0,0){$\scriptscriptstyle \bullet$}}
\put(3,0){\makebox(0,0){$\scriptstyle \bullet$}}
\put(0,5){\makebox(0,0){$\scriptstyle \circ$}}
\put(1,5){\makebox(0,0){$\scriptstyle \circ$}}
\put(1,4){\makebox(0,0){$\scriptstyle \circ$}}
\put(3,4){\makebox(0,0){$\scriptstyle \circ$}}
\put(3,1){\makebox(0,0){{\large $\circ$}}}
\put(4,1){\makebox(0,0){$\scriptstyle \circ$}}
\put(4,0){\makebox(0,0){$\scriptstyle \circ$}}

\put(0,5){\line(1,0){1}}
\put(1,5){\line(0,-1){1}}
\put(1,4){\line(1,0){2}}
\put(3,4){\line(0,-1){4}}
\put(0,3){\line(1,0){2}}
\put(2,3){\line(0,-1){2}}
\put(0,2){\line(1,0){1}}
\put(1,2){\line(0,-1){1}}
\put(1,1){\line(1,0){3}}
\put(4,1){\line(0,-1){1}}
\put(3,-1){\makebox(0,0){$\scriptstyle W_4,\ \beta_4=2$}}

\end{picture}
\kern1cm
\begin{picture}(6,6)
\put(0,0){\line(1,0){6}}
\put(0,6){\line(1,0){6}}
\put(0,0){\line(0,1){6}}
\put(6,0){\line(0,1){6}}
\multiput(1,0)(1,0){5}{\line(0,1){0.1}}
\multiput(0,1)(0,1){5}{\line(1,0){0.1}}

\put(0,2){\makebox(0,0){$\scriptstyle \bullet$}}
\put(1,2){\makebox(0,0){$\scriptstyle \bullet$}}
\put(1,0){\makebox(0,0){$\scriptstyle \bullet$}}
\put(0,3){\makebox(0,0){$\scriptstyle \circ$}}
\put(2,3){\makebox(0,0){$\scriptstyle \circ$}}
\put(2,1){\makebox(0,0){$\scriptstyle \circ$}}
\put(3,1){\makebox(0,0){$\scriptstyle \circ$}}
\put(3,0){\makebox(0,0){$\scriptstyle \circ$}}

\put(0,3){\line(1,0){2}}
\put(2,3){\line(0,-1){2}}
\put(2,1){\line(1,0){1}}
\put(3,1){\line(0,-1){1}}
\put(0,2){\line(1,0){1}}
\put(1,2){\line(0,-1){2}}
\put(3,-1){\makebox(0,0){$\scriptstyle W_5,\ \beta_5=2$}}

\end{picture}
\kern1cm
\begin{picture}(6,6)
\put(0,0){\line(1,0){6}}
\put(0,6){\line(1,0){6}}
\put(0,0){\line(0,1){6}}
\put(6,0){\line(0,1){6}}
\multiput(1,0)(1,0){5}{\line(0,1){0.1}}
\multiput(0,1)(0,1){5}{\line(1,0){0.1}}

\put(0,1){\makebox(0,0){$\scriptstyle \bullet$}}
\put(1,1){\makebox(0,0){$\scriptstyle \bullet$}}
\put(1,0){\makebox(0,0){$\scriptscriptstyle \bullet$}}
\put(0,2){\makebox(0,0){$\scriptstyle \circ$}}
\put(1,2){\makebox(0,0){$\scriptstyle \circ$}}
\put(1,0){\makebox(0,0){{\large $\circ$}}}

\put(0,2){\line(1,0){1}}
\put(1,2){\line(0,-1){2}}
\put(0,1){\line(1,0){1}}
\put(3,-1){\makebox(0,0){$\scriptstyle W_6,\ \beta_6=2$}}

\end{picture}
\kern1cm
\begin{picture}(6,6)

\end{picture}
\]

\[
\begin{picture}(3,1)
\put(0,1){\makebox(0,0){$\scriptstyle W_i:$}}
\put(1,1){\makebox(0,0){$\scriptstyle \bullet$}}
\put(2,1){\makebox(0,0){$\scriptstyle \bullet$}}
\put(1,1){\line(1,0){1}}
\end{picture}
\kern 5mm
\begin{picture}(3,1)
\put(0,1){\makebox(0,0){$\scriptstyle X_i':$}}
\put(1,1){\makebox(0,0){$\scriptstyle \times$}}
\put(2,1){\makebox(0,0){$\scriptstyle \times$}}
\put(1,1){\line(1,0){1}}
\end{picture}
\kern 5mm
\begin{picture}(3,1)
\put(0,1){\makebox(0,0){$\scriptstyle Y_i':$}}
\put(1,1){\makebox(0,0){$\scriptstyle \circ$}}
\put(2,1){\makebox(0,0){$\scriptstyle \circ$}}
\put(1,1){\line(1,0){1}}
\end{picture}
\]

\[
\text{Figure 20. An example of forward slicing}
\]

\vskip5mm
\setlength{\unitlength}{8mm}
\[
\begin{picture}(11,11)
\put(9,4){\vector(1,0){2}}
\put(4,10){\vector(0,1){1}}
\put(1,1){\vector(-1,-1){1}}
\put(11,3.8){\makebox(0,0)[t]{$\scriptstyle x$}}
\put(4.1,11){\makebox(0,0)[l]{$\scriptstyle y$}}
\put(0,0.2){\makebox(0,0)[b]{$\scriptstyle z$}}
\put(10,3.8){\makebox(0,0)[t]{$\scriptstyle 6$}}
\put(4.1,10.2){\makebox(0,0)[bl]{$\scriptstyle 6$}}
\put(0.9,1){\makebox(0,0)[br]{$\scriptstyle 6$}}

\put(4,10){\line(1,0){5}}
\put(3,9){\line(1,0){3}}
\put(3,8){\line(1,0){1}}
\put(2.5,7.5){\line(1,0){1}}
\put(2.5,5.5){\line(1,0){2}}
\put(2,5){\line(1,0){2}}
\put(2,4){\line(1,0){1}}
\put(1.5,3.5){\line(1,0){1}}
\put(1.5,2.5){\line(1,0){1}}
\put(6.5,9.5){\line(1,0){2}}
\put(6.5,6.5){\line(1,0){2}}
\put(6,6){\line(1,0){2}}
\put(4,7){\line(1,0){2}}
\put(3.5,6.5){\line(1,0){2}}
\put(4.5,3.5){\line(1,0){2}}
\put(3,2){\line(1,0){2}}
\put(9,5){\line(1,0){1}}
\put(1,1){\line(1,0){1}}
\put(1,2){\line(1,0){1}}
\put(4,3){\line(1,0){1}}
\put(5.5,2.5){\line(1,0){1}}
\put(6,4){\line(1,0){1}}
\put(7,3){\line(1,0){1}}

\put(3,9){\line(0,-1){1}}
\put(2.5,7.5){\line(0,-1){2}}
\put(3.5,7.5){\line(0,-1){1}}
\put(4,8){\line(0,-1){1}}
\put(6,6){\line(0,-1){2}}
\put(5.5,6.5){\line(0,-1){4}}
\put(6.5,9.5){\line(0,-1){3}}
\put(8.5,9.5){\line(0,-1){3}}
\put(9,10){\line(0,-1){6}}
\put(10,5){\line(0,-1){1}}
\put(8,6){\line(0,-1){3}}
\put(2,5){\line(0,-1){1}}
\put(1.5,3.5){\line(0,-1){1}}
\put(1,2){\line(0,-1){1}}
\put(2,2){\line(0,-1){1}}
\put(2.5,3.5){\line(0,-1){1}}
\put(3,4){\line(0,-1){2}}
\put(4,5){\line(0,-1){2}}
\put(4.5,5.5){\line(0,-1){2}}
\put(6,9){\line(0,-1){2}}
\put(5,3){\line(0,-1){1}}
\put(6.5,3.5){\line(0,-1){1}}
\put(7,4){\line(0,-1){1}}

\put(4,10){\line(-1,-1){1}}
\put(9,10){\line(-1,-1){0.5}}
\put(6.5,9.5){\line(-1,-1){0.5}}
\put(6,7){\line(-1,-1){0.5}}
\put(6,4){\line(-1,-1){1}}
\put(9,4){\line(-1,-1){1}}
\put(3,8){\line(-1,-1){0.5}}
\put(4,8){\line(-1,-1){0.5}}
\put(4,7){\line(-1,-1){0.5}}
\put(2.5,5.5){\line(-1,-1){0.5}}
\put(4.5,5.5){\line(-1,-1){0.5}}
\put(4.5,3.5){\line(-1,-1){0.5}}
\put(2,4){\line(-1,-1){0.5}}
\put(3,4){\line(-1,-1){0.5}}
\put(1.5,2.5){\line(-1,-1){0.5}}
\put(2.5,2.5){\line(-1,-1){0.5}}
\put(3,2){\line(-1,-1){1}}
\put(6.5,6.5){\line(-1,-1){0.5}}
\put(8.5,6.5){\line(-1,-1){0.5}}
\put(5.5,2.5){\line(-1,-1){0.5}}
\put(7,3){\line(-1,-1){0.5}}
\put(7,4){\line(-1,-1){0.5}}

\end{picture}
\]
\[
\text{Figure 21. The ideal $\textstyle I=\bigcup_{j=0}^6\bigl(\iota(W_j)\times\{j\}\bigr)$}
\]

\section{Enumerating $A$-Invariant Ideals of ${\mathcal U}$}

In this section, we consider the case $r=1$. 
Therefore, we are interested in ideals of $\mathcal U$ which are invariant (symmetric) under the action of $A$. The problem here is more difficult than the one in Section 4.

In order to enumerate the  $A$-invariant ideals of ${\mathcal U}$, we partition ${\mathcal U}$ as
\[
{\mathcal U}=\bigcup_{i=0}^n{\mathcal V}_i
\]
where
\[
{\mathcal V}_i=\bigl\{
(x,y,z)\in{\mathcal U}:x\le i,\,y\le i,\,z\le i\ \text{and at least
one of $x,y,z$ is $i$}\bigr\}.
\]
For any subset $X\subset {\Bbb R}^3$, we denote its image under $A$,
i.e., $\{xA:x\in X\}$, by $X^A$. Put
\[
V_i=[0,i]^2\times\{i\}.
\]  
Then 
\[
{\mathcal V}_i=V_i\cup V_i^A\cup V_i^{A^2}.
\]

Let $I$ be an $A$-invariant ideal of ${\mathcal V}_i$. Write
\[
I\cap V_i=J\times\{i\}.
\]
Then $J$ is an ideal of $[0,i]^2$ such that 
\[
I= (J\times\{i\})\cup(J\times\{i\})^A\cup (J\times\{i\})^{A^2}
\]
and
\begin{equation}\label{x:y:}
\{x:(x,i)\in J\}=\{y:(i,y)\in J\}.
\end{equation}
On the other hand, if $J$ is any subset of $[0,i]^2$ satisfying (\ref{x:y:}), then
the $A$-invariant subset $I= (J\times\{i\})\cup(J\times\{i\})^A\cup (J\times\{i\})^{A^2}\subset {\mathcal V}_i$ has the property that $I\cap V_i=J\times\{i\}$.

Let $J_j$ ($0\le j\le i$) be an ideal of $[0,j]^2$ such that
\begin{equation}\label{x:}
\{x:(x,j)\in J_j\}=\{y:(j,y)\in J_j\}.
\end{equation}
We call the sequence $J_0,\dots,J_{i-1}$ {\em consistent} if
\[
\bigcup_{j=0}^{i-1}\Bigl[(J_j\times\{j\})\cup(J_j\times\{j\})^A\cup (J_j\times\{j\})^{A^2}\Bigr]
\]
is an $A$-invariant ideal of $[0,i-1]^3$. The ideal $J_i$ of
$[0,i]^2$ is said to be {\em consistent} with $J_0,\dots,J_{i-1}$ if
the sequence $J_0,\dots,J_{i-1},J_i$ is consistent. Note that the meaning
of consistency here is different from that of Section 4.

\medskip
\noindent{\bf Note.}
In the terminology of Section 2, the statement that $J_i$ is {\em consistent} with $J_0,\dots,j_{i-1}$ means that $\bigcup_{s=0}^2(J_i\times\{i\})^{A^s}$ is {\em compatible} with $\bigcup_{s=0}^2(J_j\times\{j\})^{A^s}$, $0\le j<i$, with respect to the partition $\mathcal U=\bigcup_{j=0}^n\mathcal V_j$.

\medskip

Given a consistent sequence of ideals $J_0,\dots,J_{i-1}$ and an ideal $J_i$
of $[0,i]^2$. Our goal in this section, roughly speaking, is to determined two walks 
$\Phi_i$ and $\Psi_i$ in $[0,i]^2$ such that $J_i$ is consistent with
$J_0,\dots,J_{i-1}$ if and only if $\Phi_i\le\omega(J_i)\le \Psi_i$.

\begin{lem}\label {L5.1} 
Let $0\le i\le n$. 
Let $J_j$ ($0\le j\le i$) be an ideal of $[0,j]^2$ such that $J_0,\dots,J_{i-1}$ is a consistent sequence. 
Write
\begin{equation}\label{bigcup}
\bigcup_{j=0}^{i-1}\Bigl[(J_j\times\{j\})\cup(J_j\times\{j\})^A\cup (J_j\times\{j\})^{A^2}\Bigr]
=\bigcup_{j=0}^{i-1}\bigl(J_{i,j}\times\{j\}\bigr),
\end{equation}
where $J_{i,j}$ $(0\le j\le i-1$) is a ideal of $[0,i-1]^2$, and write
\[
\omega(J_i)=\bigl((x_0,y_0),\dots,(x_k,y_k)\bigr).
\]
Then $J_i$ is consistent with $J_0,\dots,J_{i-1}$ if and only if the following conditions are satisfied:

\begin{equation}\label{5-1}
(x_0,y_0)=(y_k,x_k)\quad\text{if}\ y_0=i.
\end{equation} 

\begin{equation}\label{5-2}
\bigl(J_i\times\{i\}+\Delta\bigr)\cap\bigl([0,i-1]^2\times\{j\}\bigr)
\subset J_{i,j}\times\{j\}\quad\text{for all}\ 0\le j<i.
\end{equation}

\begin{equation}\label{5-3}
\bigl(J_{i,j}\times\{j\}+\Delta\bigr)\cap V_i
\subset J_i\times\{i\}\quad\text{for all}\ 0\le j<i.
\end{equation}

\begin{equation}\label{5-4}
\bigl(J_i\times\{i\}+\Delta\bigr)\cap V_i^A
\subset\bigl(J_i\times\{i\}\bigr)^A.
\end{equation}

\begin{equation}\label{5-5}
\bigl(J_i\times\{i\}+\Delta\bigr)\cap V_i^{A^2}
\subset\bigl(J_i\times\{i\}\bigr)^{A^2}.
\end{equation}
\end{lem}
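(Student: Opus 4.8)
The plan is to transcribe the definition of consistency through Lemma~\ref{L2.1}(ii), applied a few times, with the symmetry under $A$ cutting the number of conditions down to five. Write $I=\bigcup_{j=0}^{i}\bigl[(J_j\times\{j\})\cup(J_j\times\{j\})^A\cup(J_j\times\{j\})^{A^2}\bigr]$, so that by definition $J_i$ is consistent with $J_0,\dots,J_{i-1}$ precisely when $I$ is an $A$-invariant ideal of $[0,i]^3$. Set $I_i=\bigcup_{s=0}^{2}(J_i\times\{i\})^{A^s}\subset\mathcal V_i$ and $I_{<i}=\bigcup_{j<i}(J_{i,j}\times\{j\})$; then $I=I_i\cup I_{<i}$ with $I_i\cap I_{<i}=\emptyset$, and by hypothesis $I_{<i}$ is an $A$-invariant ideal of $\bigcup_{j<i}\mathcal V_j=[0,i-1]^3$. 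Since $A^3$ is the identity, every orbit-union $(J_j\times\{j\})\cup(J_j\times\{j\})^A\cup(J_j\times\{j\})^{A^2}$ is $A$-invariant, hence so is $I$; therefore $J_i$ is consistent with $J_0,\dots,J_{i-1}$ if and only if $I$ is an ideal of $[0,i]^3$.

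The first step is to apply Lemma~\ref{L2.1}(ii) with $\Omega=\mathcal V_i$ and $\Gamma=[0,i-1]^3$: these are disjoint with union $[0,i]^3$, and the compatibility hypothesis of that lemma holds because $I_i\cap[0,i-1]^3=I_{<i}\cap\mathcal V_i=\emptyset$. Thus $I$ is an ideal of $[0,i]^3$ if and only if all of the following hold: (a) $I_i$ is an ideal of $\mathcal V_i$; (b) $(I_i+\Delta)\cap[0,i-1]^3\subset I_{<i}$; (c) $(I_{<i}+\Delta)\cap\mathcal V_i\subset I_i$. (For the ``only if'' half one may also argue directly: if $I$ is an ideal of $[0,i]^3$, intersecting the inequality $(I+\Delta)\cap[0,i]^3\subset I$ with $\mathcal V_i$ and with $[0,i-1]^3$ and using the disjointness yields (a), (b), (c).) It remains to match (a), (b), (c) against the five stated conditions; throughout I use that $A$ fixes $\Delta$, cyclically permutes the coordinate planes $z=i$, $y=i$, $x=i$ (equivalently $V_i$, $V_i^A$, $V_i^{A^2}$), and carries each of $I_i$, $I_{<i}$ onto itself.

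For (a): because $J_i$ is an ideal of $[0,i]^2$ and the order $\prec$ restricted to each of the planes $z=i$, $y=i$, $x=i$ coincides, after the evident relabeling, with the two-dimensional order defined by $D$ (by the cross-section analysis of Section~3), the sets $J_i\times\{i\}$, $(J_i\times\{i\})^A$, $(J_i\times\{i\})^{A^2}$ are ideals of $V_i$, $V_i^A$, $V_i^{A^2}$ respectively. I would glue these three into $\mathcal V_i$ by two more applications of Lemma~\ref{L2.1}(ii). The compatibility needed on the shared edges $V_i\cap V_i^A=\{y=z=i\}$ and $V_i\cap V_i^{A^2}=\{x=z=i\}$ is exactly condition (\ref{x:}) for $J_i$; when (\ref{5-4}) and (\ref{5-5}) are tested on those edges they force (\ref{x:}), and (\ref{x:}) in turn implies the directly checkable endpoint condition (\ref{5-1}) on $\omega(J_i)$. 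The glueing inequalities produced by the two applications of Lemma~\ref{L2.1}(ii) number six (one per ordered pair of distinct slices), and they fall into two $\langle A\rangle$-orbits represented by (\ref{5-4}) and (\ref{5-5}); hence (a) $\Leftrightarrow$ (\ref{5-1})$\,\wedge\,$(\ref{5-4})$\,\wedge\,$(\ref{5-5}). For (b): since $I_i$, $I_{<i}$, $[0,i-1]^3$ are all $A$-invariant, (b) is equivalent to $(J_i\times\{i\}+\Delta)\cap[0,i-1]^3\subset I_{<i}$, and slicing $[0,i-1]^3=\bigcup_{j<i}[0,i-1]^2\times\{j\}$ against $I_{<i}=\bigcup_{j<i}J_{i,j}\times\{j\}$ turns this into (\ref{5-2}). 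For (c): $A$-invariance reduces (c) to $(I_{<i}+\Delta)\cap V_i\subset I_i$; since (\ref{5-4}) and (\ref{5-5}), hence (\ref{x:}), give $I_i\cap V_i=J_i\times\{i\}$, and slicing $I_{<i}$ over $j$, this becomes (\ref{5-3}). Assembling the three equivalences proves the lemma.

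I expect the analysis of (a) to be the main obstacle: the decomposition $\mathcal V_i=V_i\cup V_i^A\cup V_i^{A^2}$ is by \emph{overlapping} pieces, so one must carefully (i) check the identification of the three-dimensional order $\prec$ with the two-dimensional order defined by $D$ on each plane, (ii) handle the edges where two slices meet --- the point at which (\ref{5-1}) enters, and where one must be attentive that (\ref{5-1}) is strictly \emph{weaker} than (\ref{x:}), so that (\ref{x:}) has to be recovered from (\ref{5-4}) and (\ref{5-5}) before it can be used (for instance to get $I_i\cap V_i=J_i\times\{i\}$ in the treatment of (c)) --- and (iii) do the bookkeeping that collapses the six glueing inequalities to (\ref{5-4}) and (\ref{5-5}) via the action of $A$. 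The remaining steps are routine unwindings of Lemma~\ref{L2.1}(ii) together with the cross-section formulas of Section~3.
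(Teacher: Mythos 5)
Your overall strategy mirrors the paper's: set $I_i=\bigcup_{s=0}^2(J_i\times\{i\})^{A^s}$ and $I_{<i}$, decompose ``$I_i\cup I_{<i}$ is an ideal'' via Lemma~\ref{L2.1}(ii) into the three clauses (a), (b), (c), and push the $A$-symmetry around to reduce to conditions on $J_i\times\{i\}$. You also correctly notice a genuine subtlety that the paper slides past, namely that (\ref{5-1}) is strictly weaker than (\ref{x:y:}) and that (\ref{x:y:}) really comes out of (\ref{5-4}) and (\ref{5-5}).

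There is, however, a gap in your opening sentence and it propagates. You write that ``by definition $J_i$ is consistent with $J_0,\dots,J_{i-1}$ precisely when $I$ is an $A$-invariant ideal of $[0,i]^3$.'' But the paper's definition of consistency is given only \emph{within} the scope ``Let $J_j$ be an ideal of $[0,j]^2$ such that (\ref{x:})''; the paper's $(\Rightarrow)$ direction explicitly invokes (\ref{x:}) for $J_i$ (``Equation (\ref{5-1}) follows from (\ref{x:})''), and its $(\Leftarrow)$ direction is at pains to recover (\ref{x:y:}) before concluding. So ``$J_i$ is consistent'' means \emph{both} that $J_i$ satisfies (\ref{x:}) \emph{and} that $I$ is an $A$-invariant ideal. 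Your reading drops the first requirement. The consequence is that your intermediate claim ``(a) $\Leftrightarrow$ (\ref{5-1})$\wedge$(\ref{5-4})$\wedge$(\ref{5-5})'' is false: one can have $I_i$ an ideal of $\mathcal V_i$ while (\ref{5-4}) or (\ref{5-5}) fails, because when (\ref{x:y:}) fails one has $I_i\cap V_i\supsetneq J_i\times\{i\}$ and the two-dimensional picture you rely on is no longer $J_i$. A concrete counterexample: take $p=2$, $i=1$, $J_1=\{(0,0),(1,0)\}$. Then $I_1=[0,1]^3\setminus\{(0,0,0),(1,1,1)\}$ is an ideal of $\mathcal V_1$ (so (a) holds, and with $J_0=\{(0,0)\}$ conditions (b), (c) hold too), and (\ref{5-1}) is vacuous since $y_0=0\ne i$; yet (\ref{5-5}) fails because $(1,0,1)\in (J_1\times\{1\})\cap V_1^{A^2}$ but $(0,1)\notin J_1$ so $(1,0,1)\notin(J_1\times\{1\})^{A^2}$. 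The error is an over-reading of Lemma~\ref{L2.1}(ii): that lemma gives a biconditional only \emph{under the overlap hypothesis} $I\cap\Gamma=J\cap\Omega$, which for your gluing of $V_i,V_i^A,V_i^{A^2}$ is exactly (\ref{x:y:}); when that hypothesis fails the lemma is silent, so you cannot conclude ``(a) iff compatibility-on-edges and gluing.'' Once you state consistency correctly as ``(\ref{x:}) and $I$ is an ideal,'' the rest of your argument goes through: (\ref{5-4})$\wedge$(\ref{5-5})$\Rightarrow$(\ref{x:y:})$\Rightarrow$(\ref{5-1}), (\ref{x:y:})$\wedge$(a)$\Leftrightarrow$(\ref{5-4})$\wedge$(\ref{5-5}), and (b), (c) unwind to (\ref{5-2}), (\ref{5-3}) as you describe (with (\ref{x:y:}) feeding the identity $I_i\cap V_i=J_i\times\{i\}$ needed for (c)).
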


\begin{proof}
Let $I=(J_i\times\{i\})\cup(J_i\times\{i\})^A\cup(J_i\times\{i\})^{A^2}$
and denote by $I'$ the ideal of $[0,i-1]^3$ in (\ref{bigcup}).

($\Rightarrow$) Equation (\ref{5-1}) follows from (\ref{x:}).
Since $J_0,\dots,J_{i-1},J_i$ is a consistent sequence of ideals, $I\cup I'$ is an ideal of $[0,i]^3$.
By Lemma~\ref{L2.1} (ii), we have
\[
\begin{cases}
(I\cap V_i+\Delta)\cap[0,i-1]^3\subset I',\cr
(I'+\Delta)\cap V_i\subset I\cap V_i,\cr
(I\cap V_i+\Delta)\cap V_i^A\subset I\cap V_i^A,\cr
(I\cap V_i+\Delta)\cap V_i^{A^2}\subset I\cap V_i^{A^2}.\cr
\end{cases}
\]
These inclusions are equivalent to (\ref{5-2}) -- (\ref{5-5}) respectively.

($\Leftarrow$) First, from (\ref{5-1}), we have
\[
\{x:(x,i)\in J_i\}=\{y:(i,y)\in J_i\}.
\]
Thus (cf. the statement after (\ref{x:y:})),
\begin{equation}\label{I3}
I\cap V_i=J_i\times\{i\}.
\end{equation}

From (\ref{I3}), (\ref{5-4}), and (\ref{5-5}), we have
\[
(I\cap V_i+\Delta)\cap V_i^{A^k}\subset I\cap V_i^{A^k},\quad k=0,1,2.
\]
Hence 
\[
(I\cap V_i+\Delta)\cap{\mathcal V}_i\subset I.
\]
Since $I$ is $A$-invariant, we have
\[
(I+\Delta)\cap{\mathcal V}_i\subset I,
\]
which means that $I$ is an ideal of ${\mathcal V}_i$.

From (\ref{I3}), (\ref{5-2}), and (\ref{5-3}), we have
\[
\begin{cases}
(I\cap V_i+\Delta)\cap[0,i-1]^3\subset I',\cr
(I'+\Delta)\cap V_i\subset I\cap V_i.\cr
\end{cases}
\]
Since both $I$ and $I'$ are $A$-invariant, we obtain
\[
\begin{cases}
(I\cap V_i^{A^k}+\Delta)\cap[0,i-1]^3\subset I',\cr
(I'+\Delta)\cap V_i^{A^k}\subset I\cap V_i^{A^k},\cr
\end{cases}
\quad\quad k=0,1,2.
\]
Therefore,
\begin{equation}\label{5-6}
\begin{cases} 
(I+\Delta)\cap[0,i-1]^3\subset I',\cr
(I'+\Delta)\cap{\mathcal V}_i\subset I.\cr
\end{cases}
\end{equation}
By (\ref{5-6}) and Lemma~\ref{L2.1} (ii), $I\cup I'$ is an ideal of $[0,i]^3$, i.e., $J_0,\dots,J_{i-1},J_i$ is consistent.
\end{proof}

\begin{lem}\label{L5.2}
In Lemma~\ref{L5.1}, {\rm (\ref{5-1}) -- (\ref{5-3})} imply 
{\rm (\ref{5-4})}.
\end{lem}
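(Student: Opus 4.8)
The plan is to feed conditions (\ref{5-2}) and (\ref{5-3}) into Lemma~\ref{L2.1}, pass to $A$-images, and thereby reduce (\ref{5-4}) to producing a single intermediate lattice point.

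Let $I'=\bigcup_{j=0}^{i-1}(J_{i,j}\times\{j\})$ denote the $A$-invariant ideal of $[0,i-1]^3$ appearing in (\ref{bigcup}). Since $I'\cap V_i=\emptyset=(J_i\times\{i\})\cap[0,i-1]^3$ and $J_i\times\{i\}$ is an ideal of $V_i$, Lemma~\ref{L2.1}(ii) (with $\Omega=[0,i-1]^3$ and $\Gamma=V_i$) shows that (\ref{5-2}) and (\ref{5-3}) together are precisely the statement that $I'':=I'\cup(J_i\times\{i\})$ is an ideal of $[0,i-1]^3\cup V_i$. As $A$ preserves $\prec$ and carries $[0,i-1]^3\cup V_i$ onto $[0,i-1]^3\cup V_i^A$, the set $(I'')^A=I'\cup(J_i\times\{i\})^A$ is an ideal of $[0,i-1]^3\cup V_i^A$.

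Now let $w\in(J_i\times\{i\}+\Delta)\cap V_i^A$, write $w=(u,i,v)$, and choose $q=(x,y,i)\in J_i\times\{i\}$ with $w\prec q$. It suffices to find a point $r$ with $w\prec r$ and $r\in(I'')^A$: since $(I'')^A$ is an ideal of $[0,i-1]^3\cup V_i^A$ and $w\in V_i^A$, we then get $w\in(I'')^A=I'\cup(J_i\times\{i\})^A$, and since $w$ has a coordinate equal to $i$ it cannot lie in $I'\subset[0,i-1]^3$, so $w\in(J_i\times\{i\})^A$, which is (\ref{5-4}). If $y=i$ this is immediate: $(x,i)\in J_i$, so (\ref{5-1}) gives $(i,x)\in J_i$, and two of the three inequalities $(w-q)P\le 0$ say exactly that $(v,u)\prec(i,x)$, whence $(v,u)\in J_i$ and $w\in(J_i\times\{i\})^A$. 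So assume $y<i$; then it is enough to produce $r\in[0,i-1]^3\cap{\Bbb Z}^3$ with $w\prec r\prec q$, for by (\ref{5-2}), applied with $j$ equal to the third coordinate of $r$, such an $r$ lies in $J_{i,j}\times\{j\}\subset I'\subset(I'')^A$.

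To construct $r$, write $w-q=s_1e_1+s_2e_2+s_3e_3$, where $e_1=(1,0,-p)$, $e_2=(-p,1,0)$, $e_3=(0,-p,1)$ are the generators of $\Delta$ and $s_1,s_2,s_3\ge 0$; this is possible exactly because $w\prec q$. Every $r=q+\mu_1e_1+\mu_2e_2+\mu_3e_3$ with $0\le\mu_j\le s_j$ satisfies $w\prec r\prec q$. The principal choice is $\mu=(1,0,0)$, i.e.\ $r=(x+1,y,i-p)$, which lies in $[0,i-1]^3$ whenever $i\ge p$ and $x\le i-2$; the other positions of $q$ near the boundary of $[0,i]^3$ are handled either by other small nonnegative integer combinations of $e_1,e_2,e_3$, chosen according to which coordinates of $w$ and $q$ equal $i$ and to the sizes of the $s_j$ (which Lemma~\ref{L3.1} controls), or, in the boundary configurations for which no lattice point of $[0,i-1]^3$ lies strictly between $w$ and $q$, by the same direct argument from (\ref{5-1}) used above for $y=i$. (When $i<p$ the order $\prec$ on $[0,i]^3$ is coordinatewise, so $w\prec q$ forces $y=i$ and we are in the easy case.) I expect this last case analysis to be the only real difficulty: one must check that whenever $\mu=(1,0,0)$ is inadmissible a suitable replacement $r$ is available, and that the remaining boundary configurations are exactly those disposed of by (\ref{5-1}). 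This is a finite bookkeeping on the coordinates of $w$ and $q$ and needs nothing beyond the reduction above, the generators of $\Delta$, and Lemma~\ref{L3.1}.
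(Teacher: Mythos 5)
Your overall strategy --- reduce (\ref{5-4}) to producing a lattice point $r\in[0,i-1]^3$ with $w\prec r\prec q$ and then invoke (\ref{5-2}) and (\ref{5-3}), handling the degenerate configurations directly through (\ref{5-1}) --- is the strategy the paper follows. The opening reformulation via Lemma~\ref{L2.1}(ii) (that (\ref{5-2}) and (\ref{5-3}) say $I'\cup(J_i\times\{i\})$ is an ideal of $[0,i-1]^3\cup V_i$, whence its $A$-image is an ideal) is a tidy way to package the same transfer, but it buys nothing beyond what applying (\ref{5-2}) and (\ref{5-3}) directly gives.

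The gap is that the step you defer as ``finite bookkeeping'' is the analytical heart of the lemma, and it is not bookkeeping. Your principal intermediate $r=q+e_1=(x+1,y,i-p)$ requires \emph{both} $x\le i-2$ (so that $r\in[0,i-1]^3$) \emph{and} $s_1\ge 1$ in your cone coordinates (so that $w\prec r$), and neither is automatic; moreover you never separately dispose of the case $v=i$, even though then $w$ already has two coordinates equal to $i$ and no point of $[0,i-1]^3$ can lie $\prec$-between $w$ and $q$ --- that case has to go through (\ref{5-1}), as does $y=i$. The paper instead takes $r=w-e_2=(x'+p,i-1,z')$ with $w=(x',i,z')$; this choice always satisfies $w\prec r\prec q$ once $y<i$ and $z'<i$ (verified via (\ref{Iff4})), so the only obstruction is $x'+p>i-1$. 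The paper then proves the inequality $x'<i-p$ whenever the direct route $(z',x')\prec(x,y)$ fails, by the two-subcase estimate ($z'>x$ versus $z'\le x$) combining (5-A) and (5-B). That estimate is the real content of the lemma; it is not a boundary checklist. Until you state and prove an analogue --- a precise characterization of when $\mu=(1,0,0)$ (or any small integer $\mu$) fails, together with a proof that each failing configuration reduces to (\ref{5-1}) --- the argument is incomplete, and carrying that out honestly would, I expect, reproduce the paper's computation rather than shortcut it.
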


\begin{proof}
First assume $i<p$. In this case, the partial order $\prec$ in $[0,i]^3$
is the cartesian product of linear orders and (\ref{5-4}) follows from
(\ref{5-1}) trivially. (See Figure 22.)

\vskip5mm
\setlength{\unitlength}{7.2mm}
\[
\begin{picture}(13,13)
\put(11,5){\vector(1,0){2}}
\put(5,11){\vector(0,1){2}}
\put(2,2){\vector(-1,-1){2}}

\put(2,2){\line(1,0){6}}
\put(2,2){\line(0,1){6}}
\put(2,8){\line(1,0){6}}
\put(8,2){\line(0,1){6}}
\put(5,11){\line(1,0){6}}
\put(11,5){\line(0,1){6}}
\put(2,8){\line(1,1){3}}
\put(8,8){\line(1,1){3}}
\put(4,8){\line(1,1){3}}
\put(8,2){\line(1,1){3}}
\put(4,8){\line(0,-1){2}}
\put(4,6){\line(1,0){1}}
\put(5,6){\line(0,-1){1}}
\put(5,5){\line(1,0){2}}
\put(7,5){\line(0,-1){1}}
\put(7,4){\line(1,0){1}}
\put(4,8){\line(1,1){0.5}}
\put(4.5,8.5){\line(1,0){1}}
\put(5.5,8.5){\line(1,1){1}}
\put(6.5,9.5){\line(1,0){1}}
\put(7.5,9.5){\line(1,1){0.5}}
\put(8,10){\line(1,0){2}}

\put(13,4.8){\makebox(0,0)[t]{$\scriptstyle x$}}
\put(5.2,13){\makebox(0,0)[l]{$\scriptstyle y$}}
\put(0,0.5){\makebox(0,0){$\scriptstyle z$}}

\put(7,11){\makebox(0,0){$\scriptstyle \bullet$}}
\put(4,8){\makebox(0,0){$\scriptscriptstyle \bullet$}}
\put(4,8){\makebox(0,0){{\large $\circ$}}}
\put(4.5,8.5){\makebox(0,0){$\scriptstyle \circ$}}
\put(5.5,8.5){\makebox(0,0){$\scriptstyle \circ$}}
\put(6.5,9.5){\makebox(0,0){$\scriptstyle \circ$}}
\put(7.5,9.5){\makebox(0,0){$\scriptstyle \circ$}}
\put(8,10){\makebox(0,0){$\scriptstyle \circ$}}
\put(10,10){\makebox(0,0){$\scriptstyle \circ$}}
\put(4,8){\makebox(0,0){$\scriptstyle \times$}}
\put(4,6){\makebox(0,0){$\scriptstyle \times$}}
\put(5,6){\makebox(0,0){$\scriptstyle \times$}}
\put(5,5){\makebox(0,0){$\scriptstyle \times$}}
\put(7,5){\makebox(0,0){$\scriptstyle \times$}}
\put(7,4){\makebox(0,0){$\scriptstyle \times$}}
\put(8,4){\makebox(0,0){$\scriptstyle \times$}}

\put(1,-1){\makebox(0,0){$\scriptstyle \times$}}
\put(2,-1){\makebox(0,0){$\scriptstyle \times$}}
\put(1,-1){\line(1,0){1}}
\put(2.5,-1){\makebox(0,0)[l]{$\scriptstyle :\ \text{boundary of}\ J_i\times\{i\}$}}
\put(1,-1.5){\makebox(0,0){$\scriptstyle \circ$}}
\put(2,-1.5){\makebox(0,0){$\scriptstyle \circ$}}
\put(1,-1.5){\line(1,0){1}}
\put(2.5,-1.5){\makebox(0,0)[l]{$\scriptstyle :\ \text{boundary of}\ (J_i\times\{i\})^A$}}
\put(1,-2){\makebox(0,0){$\scriptstyle \bullet$}}
\put(2,-2){\makebox(0,0){$\scriptstyle \bullet$}}
\put(1,-2){\line(1,0){1}}
\put(2.5,-2){\makebox(0,0)[l]{$\scriptstyle :\ \text{boundary of}\ (J_i\times\{i\}+\Delta)\cap V_i^A$}}

\end{picture}
\]
\vskip1.5cm
\[
\text{Figure 22. $(J_i\times\{i\}+\Delta)\cap V_i^A\subset
(J_i\times\{i\})^A$ when $i<p$}
\]
\vskip5mm

So we assume that $i\ge p$. Let $I$ and $I'$ be as in the proof of
Lemma~\ref{L5.1}. Note that $I\cap V_i=J_i\times\{i\}$ by (\ref{5-1}).

For each $u=(x',i,z')\in\bigl(J_i\times\{i\}+\Delta\bigr)\cap V_i^A$,
we want to show that $u\in(J_i\times\{i\})^A$. Note that there exists
$(x,y)\in J_i$ such that $u\prec(x,y,i)$.

If $y=i$, then $(x,y,i)\in(J_i\times\{i\})\cap V_i^A=I\cap V_i\cap V_i^A
\subset(I\cap V_i)^A=(J_i\times\{i\})^A$. Since $(J_i\times\{i\})^A$ is an
ideal of $V_i^A$, we have $u\in(J_i\times\{i\})^A$. Thus we assume $y<i$.

If $z'=i$, then $(x',i,i)\prec(x,y,i)$ implies $(x',i)\prec(x,y)$, hence
$(x',i)\in J_i$. By (\ref{5-1}), $(i,x')\in J_i$, hence $u=(x',i,i)=
(i,x',i)A\in(J_i\times\{i\})^A$. Thus we assume $z'<i$.

By (\ref{Iff4}), $(x',i,z')\prec(x,y,i)$ if and only if
\[
(x',i,z')\prec(x,i,i)-(i-y)(p,0,0)=(x-(i-y)p, i,i).
\]
Thus we have
\[
\begin{split}
(x',i,z')\,&\prec (x',i,z')+(p,-1,0)\cr
&\prec(x-(i-y)p, i,i)+(p,-1,0)\cr
&=(x-(i-1-y)p, i-1, i)\cr
&=(x,y,i)+(i-1-y)(-p,1,0)\cr
&\prec(x,y,i),
\end{split}
\]
i.e.,
\begin{equation}\label{5-a}
(x',i,z')\prec(x'+p,i-1,z')\prec(x,y,i).
\end{equation}

If $(z',x')\prec(x,y)$, then $(z',x')\in J_i$. Thus $(x',i,z')=(z',x',i)A
\in(J_i\times\{i\})^A$. Therefore, we assume $(z',x')\not\prec (x,y)$.

We claim that
\begin{equation}\label{x'}
x'<i-p.
\end{equation}
In fact, since $(x',i,z')\prec(x-(i-y)p,i,i)$, we have $(z',x')\prec
(i,x-(i-y)p)$, i.e.,
\begin{equation}\label{5-A}
x'\le x-(i-y)p+\frac 1p(i-z').
\end{equation}
If $z'>x$, (\ref{5-A}) gives
\[
\begin{split}
x'\,&<x-(i-y)p+\frac 1p(i-x)\cr
&=\frac{p-1}px+py-\frac{p^2-1}p i\cr
&\le\frac{p-1}p i+p(i-1)-\frac{p^2-1}p i\cr
&=i-p.\cr
\end{split}
\]
If $z'\le x$, since $(z',x')\not\prec(x,y)$, we must have
\begin{equation}\label{5-B}
x'>y+\frac 1p(x-z').
\end{equation}
Combining (\ref{5-A}) and (\ref{5-B}), we have
\[
x-(i-y)p+\frac 1p (i-z')>y+\frac 1p(x-z')
\]
which gives
\[
(p-1)y>\frac1p x-x+pi-\frac1p i=\frac{p^2-1}pi-\frac{p-1}px,
\]
i.e.,
\[
y>\frac{p+1}pi-\frac1p x\ge i,
\]
which is a contradiction. Thus (\ref{x'}) is proved.

Now we have $(x'+p,i-1,z')\prec(x,y,i)$ and $(x'+p,i-1,z')\in[0,i-1]^3$.
By (\ref{5-2}), $(x'+p,i-1,z')\in(J_i\times\{i\}+\Delta)\cap[0,i-1]^3\subset I'$. Thus we have
\[
\begin{array}{rll}
(x',i,z')\kern-3mm&\in(I'+\Delta)\cap V_i^A\quad&\text{(by (\ref{5-a}))}\cr
&\subset (J_i\times\{i\})^A&\text{(by (\ref{5-3}) and the $A$-symmetry of $I'$).}
\end{array}
\]
\end{proof}

\begin{lem}\label{L5.3}
Assume that in Lemma~\ref{L5.1}, {\rm (\ref{5-1}) -- (\ref{5-4})} are satisfied. Then {\rm (\ref{5-5})} is equivalent to
\begin{equation}\label{5-13}
\max\bigl\{y:(i-1,y)\in J_i\bigr\}\le\max\bigl\{x:(x,i-p)\in J_i\bigr\}
\quad\text{if}\ i\ge p.
\end{equation}
\end{lem}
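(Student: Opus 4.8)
The plan is to recast (\ref{5-5}) as a planar statement about $J_i$, isolate (\ref{5-13}) as the one nontrivial obstruction, and then carry out a case analysis parallel to the proof of Lemma~\ref{L5.2}. Since $(x,y,z)A^2=(z,x,y)$, we have $V_i^{A^2}=\{i\}\times[0,i]^2$ and $(J_i\times\{i\})^{A^2}=\{(i,a,b):(a,b)\in J_i\}$, so (\ref{5-5}) asserts exactly that $(x',y')\in J_i$ whenever $(x,y)\in J_i$, $0\le x',y'\le i$, and $(i,x',y')\prec(x,y,i)$; and since $i\ge x$, formula (\ref{Iff3}) rewrites the relation $(i,x',y')\prec(x,y,i)$ as the planar relation $(x',y')\prec\bigl(y,\ i-(i-x)p\bigr)$ for the cone $D$. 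As in the proof of Lemma~\ref{L5.1}, (\ref{5-1}) gives $I\cap V_i=J_i\times\{i\}$, whence by the $A$-invariance of $I$ also $I\cap V_i^{A^2}=(J_i\times\{i\})^{A^2}$ and $I\cap V_i^A=(J_i\times\{i\})^A$; summing (\ref{5-2}), (\ref{5-3}) over $j$ gives $(J_i\times\{i\}+\Delta)\cap[0,i-1]^3\subset I'$ and $(I'+\Delta)\cap V_i\subset J_i\times\{i\}$, the latter upgrading by $A$-invariance of $I'$ to $(I'+\Delta)\cap V_i^{A^2}\subset(J_i\times\{i\})^{A^2}$.

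For the implication (\ref{5-5})$\Rightarrow$(\ref{5-13}) I use an extremal witness. We may assume $i\ge p$ and that the column $x=i-1$ meets $J_i$ (otherwise (\ref{5-13}) is vacuous); put $Y=\max\{y:(i-1,y)\in J_i\}$. Then $q:=(i,Y,i-p)\in V_i^{A^2}$, and $q-(i-1,Y,i)=(1,0,-p)$ is the first row of $(1-p^e)P^{-1}$, so $q\prec(i-1,Y,i)\in J_i\times\{i\}$; thus $q\in(J_i\times\{i\}+\Delta)\cap V_i^{A^2}$, and (\ref{5-5}) forces $q\in(J_i\times\{i\})^{A^2}$, i.e. $(Y,i-p)\in J_i$, which is precisely $Y\le\max\{x:(x,i-p)\in J_i\}$.

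For the converse, assume (\ref{5-1})--(\ref{5-4}) and (\ref{5-13}); we may take $i\ge p$ (for $i<p$ the order on $[0,i]^3$ is a product order and (\ref{5-5}) follows from (\ref{5-1}), as in Lemma~\ref{L5.2}). Let $u=(i,x',y')\in(J_i\times\{i\}+\Delta)\cap V_i^{A^2}$ with $u\prec(x,y,i)$ and $(x,y)\in J_i$; the goal is $(x',y')\in J_i$. The cases $x=i$, $y'=i$, $x'=i$ and $(x',y')\prec(x,y)$ are immediate (as in Lemma~\ref{L5.2}): $x=i$ and $y'=i$ via (\ref{5-1}) and the identity $I\cap V_i^{A^2}=(J_i\times\{i\})^{A^2}$, $x'=i$ by noting $u\in V_i^A$ and applying (\ref{5-4}) then (\ref{5-1}), and $(x',y')\prec(x,y)$ because $J_i$ is an ideal; the case $y=i$ also follows from (\ref{5-1}), since then one checks directly that $(x',y')\prec(i,x)\in J_i$. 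So assume $x\le i-1$, $y\le i-1$, $x'\le i-1$, $y'\le i-1$, $(x',y')\not\prec(x,y)$, and recall $(x',y')\prec\bigl(y,\,i-(i-x)p\bigr)$. If $y'\le i-1-p$, set $w=(i-1,x',y'+p)\in[0,i-1]^3$: then $u-w=(1,0,-p)\in\Delta$ (so $u\prec w$), and $(x',y')\prec\bigl(y,\,i-(i-x)p\bigr)$ is equivalent to $(x',y'+p)\prec\bigl(y,\,i-(i-1-x)p\bigr)$, i.e. to $w\prec(x,y,i)$; hence $w\in(J_i\times\{i\}+\Delta)\cap[0,i-1]^3\subset I'$, and then $u\in(I'+\Delta)\cap V_i^{A^2}\subset(J_i\times\{i\})^{A^2}$, so $(x',y')\in J_i$. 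This subcase uses neither (\ref{5-13}) nor anything about $i$ beyond $y'\le i-1-p$.

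There remains the range $i-p\le y'\le i-1$, where $i-(i-x)p\le i-p\le y'$ and hence $(x',y')\prec(y,i-p)$. If $x=i-1$, then $(i-1,y)\in J_i$, so by (\ref{5-13}) $y\le\max\{z:(i-1,z)\in J_i\}\le\max\{z:(z,i-p)\in J_i\}$; as row $i-p$ of $J_i$ is a down-closed interval, $(y,i-p)\in J_i$ and therefore $(x',y')\in J_i$. If $x\le i-2$, set $w=(x+1,y,i-p)\in[0,i-1]^3$: then $w\prec(x,y,i)$ (since $w-(x,y,i)=(1,0,-p)$), and $u\prec w$ follows, using $i-p\le y'\le i-1$, from the elementary inequality $(i-x)(p^3-1)\ge 2p^3-p^2-1$, valid because $i-x\ge 2$; one then finishes through $I'$ as above. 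I expect the main obstacle to be this case-by-case bookkeeping — confirming in each subcase that the chosen $w$ lies in $[0,i-1]^3$ and satisfies $u\prec w\prec(x,y,i)$ — together with handling the degenerate situations where the relevant column or row of $J_i$ is empty; the underlying idea (lift into the sublayer $[0,i-1]^3$ when there is room, and otherwise invoke (\ref{5-13})) is straightforward.
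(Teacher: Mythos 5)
Your proof is correct and, while it follows the paper's overall strategy (relay through the $A$-invariant ideal $I'$ of $[0,i-1]^3$ in the generic case, invoke (\ref{5-13}) in the boundary case), the case organization is genuinely different from the paper's. The paper splits according to $x$, the first coordinate of the dominating point $(x,y,i)$: the cases are $x=i$, $x<i-1$ (with subcases $y=i$ and $y<i$, the former handled through (\ref{5-4})), and $x=i-1$. You instead split primarily on the last coordinate $z'$ of $u=(i,y',z')$: for $z'\le i-1-p$ you lift $u$ itself into the sublayer via $w=u-(1,0,-p)=(i-1,y',z'+p)\in[0,i-1]^3$, whereas the paper lifts from the other endpoint via $w=(x+1,y,i-p)=(x,y,i)+(1,0,-p)$, which lands in $[0,i-1]^3$ only when $x\le i-2$. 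This also lets you dispose of $y=i$ as an elementary ideal-membership consequence of (\ref{5-1}) — that $(y',z')\prec(i,x)\in J_i$ — instead of routing through (\ref{5-4}) as the paper does. The forward direction ((\ref{5-5})$\Rightarrow$(\ref{5-13})) is essentially the paper's argument, stated cleanly with the single row $(1,0,-p)$ of $(1-p^e)P^{-1}$. One remark on the last subcase ($x\le i-2$, $z'\ge i-p$): you invoke the inequality $(i-x)(p^3-1)\ge 2p^3-p^2-1$, which is correct for $i-x\ge2$ and does close the gap, but it is not necessary to use the bound $z'\ge i-p$ at all; writing $(u-w)P$ in terms of $(u-(x,y,i))P$ and substituting the middle inequality $(i-x)p^2+(y'-y)+(z'-i)p\le0$ gives $(u-w)P_1\le(1-(i-x))(p^3-1)\le0$ already from $i-x\ge1$, so the only real reason $x=i-1$ is exceptional in this range is that the lift point $(i,y,i-p)$ leaves $[0,i-1]^3$ and one must appeal to (\ref{5-13}) directly. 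That said, your sketch of the bookkeeping is accurate and the proof is sound.
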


\begin{proof}
First assume $i<p$. Then (\ref{5-13}) is satisfied without instance. Since in case, the partial order $\prec$ in $[0,i]^3$ is the cartesian product of linear orders, (\ref{5-5}) holds trivially. (See Figure 23.) So we assume
that $i\ge p$. Again, let $I$ and $I'$ be as in the proof of Lemma~\ref{L5.1}.

\setlength{\unitlength}{7.2mm}
\[
\begin{picture}(13,13)
\put(11,5){\vector(1,0){2}}
\put(5,11){\vector(0,1){2}}
\put(2,2){\vector(-1,-1){2}}

\put(2,2){\line(1,0){6}}
\put(2,2){\line(0,1){6}}
\put(2,8){\line(1,0){6}}
\put(8,2){\line(0,1){6}}
\put(5,11){\line(1,0){6}}
\put(11,5){\line(0,1){6}}
\put(2,8){\line(1,1){3}}
\put(8,8){\line(1,1){3}}
\put(8,4){\line(1,1){3}}
\put(8,2){\line(1,1){3}}
\put(4,8){\line(0,-1){2}}
\put(4,6){\line(1,0){1}}
\put(5,6){\line(0,-1){1}}
\put(5,5){\line(1,0){2}}
\put(7,5){\line(0,-1){1}}
\put(7,4){\line(1,0){1}}
\put(8,4){\line(1,1){1}}
\put(9,5){\line(0,1){1}}
\put(9,6){\line(1,1){0.5}}
\put(9.5,6.5){\line(0,1){2}}
\put(9.5,8.5){\line(1,1){0.5}}
\put(10,9){\line(0,1){1}}

\put(13,4.8){\makebox(0,0)[t]{$\scriptstyle x$}}
\put(5.2,13){\makebox(0,0)[l]{$\scriptstyle y$}}
\put(0,0.5){\makebox(0,0){$\scriptstyle z$}}

\put(11,7){\makebox(0,0){$\scriptstyle \bullet$}}
\put(8,4){\makebox(0,0){$\scriptscriptstyle \bullet$}}
\put(8,4){\makebox(0,0){{\large $\circ$}}}
\put(9,5){\makebox(0,0){$\scriptstyle \circ$}}
\put(9,6){\makebox(0,0){$\scriptstyle \circ$}}
\put(9.5,6.5){\makebox(0,0){$\scriptstyle \circ$}}
\put(9.5,8.5){\makebox(0,0){$\scriptstyle \circ$}}
\put(10,9){\makebox(0,0){$\scriptstyle \circ$}}
\put(10,10){\makebox(0,0){$\scriptstyle \circ$}}
\put(4,8){\makebox(0,0){$\scriptstyle \times$}}
\put(4,6){\makebox(0,0){$\scriptstyle \times$}}
\put(5,6){\makebox(0,0){$\scriptstyle \times$}}
\put(5,5){\makebox(0,0){$\scriptstyle \times$}}
\put(7,5){\makebox(0,0){$\scriptstyle \times$}}
\put(7,4){\makebox(0,0){$\scriptstyle \times$}}
\put(8,4){\makebox(0,0){$\scriptstyle \times$}}

\put(1,0){\makebox(0,0){$\scriptstyle \times$}}
\put(2,0){\makebox(0,0){$\scriptstyle \times$}}
\put(1,0){\line(1,0){1}}
\put(2.5,0){\makebox(0,0)[l]{$\scriptstyle :\ \text{boundary of}\ J_i\times\{i\}$}}
\put(1,-0.5){\makebox(0,0){$\scriptstyle \circ$}}
\put(2,-0.5){\makebox(0,0){$\scriptstyle \circ$}}
\put(1,-0.5){\line(1,0){1}}
\put(2.5,-0.5){\makebox(0,0)[l]{$\scriptstyle :\ \text{boundary of}\ (J_i\times\{i\})^{A^2}$}}
\put(1,-1){\makebox(0,0){$\scriptstyle \bullet$}}
\put(2,-1){\makebox(0,0){$\scriptstyle \bullet$}}
\put(1,-1){\line(1,0){1}}
\put(2.5,-1){\makebox(0,0)[l]{$\scriptstyle :\ \text{boundary of}\ (J_i\times\{i\}+\Delta)\cap V_i^{A^2}$}}

\end{picture}
\]
\vskip1cm
\[ 
\text{Figure 23. $(J_i\times\{i\}+\Delta)\cap V_i^{A^2}\subset
(J_i\times\{i\})^{A^2}$ when $i<p$}
\]
\vskip5mm

{\it Proof of} ``(\ref{5-13}) $\Rightarrow$ (\ref{5-5})''. Let $u=(i,y',z')\in(J_i\times\{i\}+\Delta)\cap V_i^{A^2}$. We want to show that
$u\in (J_i\times\{i\})^{A^2}$.

Note that there exists $(x,y)\in J_i$ such that $u\prec(x,y,i)$. Also note that (\ref{5-1}) implies that $I\cap V_i=J_i\times\{i\}$.

If $x=i$, then $(x,y,i)\in(J_i\times\{i\})\cap V_i^{A^2}=I\cap V_i\cap
V_i^{A^2}\subset(I\cap V_i)^{A^2}=(J_i\times\{i\})^{A^2}$. Since 
$(J_i\times\{i\})^{A^2}$ is an ideal of $V_i^{A^2}$, we have $u\in (J_i\times\{i\})^{A^2}$.

Next, assume $x<i-1$. By (\ref{Iff3}), $(i,y',z')\prec(x,y,i)$ if and only if
\[
(i,y',z')\prec(i,y,i)-(i-x)(0,0,p)=(i,y,i-p(i-x)).
\]
Thus we have
\[
\begin{split}
(i,y',z')\,&\prec(i,y,i-p(i-x))\cr
&\prec(i,y,i-p(i-x))+(i-x-1)(-1,0,p)\cr
&=(x+1,y,i-p)\cr
&\prec(x,y,i),\cr
\end{split}
\]
i.e.,
\begin{equation}\label{5-C}
u=(i,y',z')\prec(x+1,y,i-p)\prec(x,y,i).
\end{equation}
If $y=i$, then
\[
\begin{array}{rll}
(x+1,y,i-p)\kern-3mm&\in(J_i\times\{i\}+\Delta)\cap V_i^A\cr
&\subset (J_i\times\{i\})^A&\text{(by (\ref{5-4}))}.
\end{array}
\]
Thus
\[
\begin{array}{rll}
u\kern-3mm&\in\bigl[(J_i\times\{i\})^A+\Delta\bigr]\cap V_i^{A^2}\cr
&\in\bigl[(J_i\times\{i\}+\Delta)\cap V_i^A\bigr]^A\cr
&\subset(J_i\times\{i\})^{A^2}&\text{(by (\ref{5-4}) again)}.
\end{array}
\]
If $y<i$, then $(x+1,y,i-p)\in(J_i\times\{i\}+\Delta)\cap[0,i-1]^3\subset I'$. Hence we have
\[
\begin{array}{rll}
u\kern-3mm&\in(I'+\Delta)\cap V_i^{A^2}\quad&\text{(by (\ref{5-C}))}\cr
&\subset (J_i\times\{i\})^{A^2}&\text{(by (\ref{5-3}) and the $A$-symmetry of $I'$)}.\cr
\end{array}
\]
 
Finally, assume $x=i-1$. By (\ref{Iff3}), we have
\[
\bigl((x,y,i)+\Delta\bigr)\cap V_i^{A^2}=\{i\}\times\bigl[\bigl((y,i-p)+D\bigr)\cap[0,i]^2\bigr].
\]
(See Figure 24.) However, by (\ref{5-13}), $y\le\max\{x:(x,i-p)\in J_i\}$.
Thus $(y,i-p)\in J_i$. Therefore,
\[
\begin{array}{rll}
u\kern-3mm&\in\bigl((x,y,i)+\Delta\bigr)\cap V_i^{A^2}\cr
&=\{i\}\times \bigl[\bigl((y,i-p)+D\bigr)\cap[0,i]^2\bigr]\cr
&\subset\{i\}\times J_i\cr
&=(J_i\times\{i\})^{A^2}.
\end{array}
\]
\vfill\eject

\vskip5mm
\setlength{\unitlength}{6mm}
\[
\begin{picture}(9,8)
\put(0,0){\vector(1,0){9}}
\put(0,0){\vector(0,1){8}}
\put(7,0){\line(0,1){7}}
\put(0,7){\line(1,0){7}}
\put(8,2){\line(-2,1){8.5}}
\put(3.5,6){\line(1,-4){2}}
\put(0,5){\line(1,1){0.6}}
\put(0,4){\line(1,1){1.4}}
\put(0,3){\line(1,1){2}}
\put(0,2){\line(1,1){2.6}}
\put(0,1){\line(1,1){3.4}}
\put(0,0){\line(1,1){4}}
\put(1,0){\line(1,1){3.1}}
\put(2,0){\line(1,1){2.3}}
\put(3,0){\line(1,1){1.6}}
\put(4,0){\line(1,1){0.8}}

\put(-0.1,-0.1){\makebox(0,0)[tr]{$\scriptstyle 0$}}
\put(7,-0.1){\makebox(0,0)[t]{$\scriptstyle i$}}
\put(9,-0.1){\makebox(0,0)[t]{$\scriptstyle y$}}
\put(-0.1,7){\makebox(0,0)[r]{$\scriptstyle i$}}
\put(0.1,8){\makebox(0,0)[l]{$\scriptstyle z$}}
\put(4.1,4.1){\makebox(0,0)[bl]{$\scriptstyle (y,i-p)$}}
\put(5.6,-1){\makebox(0,0)[l]{$\scriptstyle \text{slope}=-p^2$}}
\put(8,2.1){\makebox(0,0)[bl]{$\scriptstyle \text{slope}=-\frac 1p$}}

\end{picture}
\]
\vskip1cm
\[
\text{Figure 24. The cross section of $(i-1,y,i)+\Delta$ in $V_i^{A^2}$}
\]
\vskip5mm

{\it Proof of} ``(\ref{5-5}) $\Rightarrow$ (\ref{5-13})''. We may assume that
$\{y:(i-1,y)\in J_i\}\ne \emptyset$. Let $\bar y=\max\{y:(i-1,y)\in J_i\}$.
Then $(i-1,\bar y)\in J_i$. Hence
\[
\begin{array}{rll}
&\{i\}\times\bigl[\bigl((\bar y,i-p)+D\bigr)\cap[0,i]^2\bigr]\cr
=\kern-2mm &\bigl((i-1,\bar y,i)+\Delta\bigr)\cap V_i^{A^2}&\text{(by (\ref{Iff3}))}\cr
\subset\kern-2mm &(J_i\times\{i\}+\Delta)\cap V_i^{A^2}\cr
\subset\kern-2mm &(J_i\times\{i\})^{A^2}&\text{(by (\ref{5-5}))}\cr
=\kern-2mm &\{i\}\times J_i.\cr
\end{array}
\]
In particular, $(\bar y, i-p)\in J_i$. Therefore
\[
\bar y\le \max\{x:(x,i-p)\in J_i\},
\]
which is (\ref{5-13}).
\end{proof}

\begin{lem}\label{L5.4}
Let $J$ be an ideal of $[0,i-1]^2$ and $K$ an ideal of $[0,i]^2$. Let $b\ge 0$ be an integer. Then
\begin{equation}\label{L5.5A}
[J+D+(0,-b)]\cap[0,i]^2\subset K
\end{equation}
if and only if
\begin{equation}\label{L5.5B}
[J+D+(0,-b)]\cap[0,i-1]^2\subset K\cap [0,i-1]^2.
\end{equation}
\end{lem}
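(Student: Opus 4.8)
The forward implication is immediate: since $[0,i-1]^2\subseteq[0,i]^2$, intersecting the hypothesis $[J+D+(0,-b)]\cap[0,i]^2\subset K$ with $[0,i-1]^2$ yields $[J+D+(0,-b)]\cap[0,i-1]^2\subset K\cap[0,i-1]^2$. All the work is in the converse. Assuming $[J+D+(0,-b)]\cap[0,i-1]^2\subset K\cap[0,i-1]^2$, take $w=(x',y')\in[J+D+(0,-b)]\cap[0,i]^2$; we must show $w\in K$. If $w\in[0,i-1]^2$ this is the hypothesis, so we may assume $x'=i$ or $y'=i$. The plan is to produce a point $w''\in[0,i-1]^2$ which still lies in $J+D+(0,-b)$ and satisfies $w\prec w''$: then $w''\in K\cap[0,i-1]^2\subset K$ by hypothesis, and since $K$ is an ideal of $[0,i]^2$, the relation $w\prec w''$ forces $w\in K$.

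To build $w''$, fix $(v_1,v_2)\in J\subseteq[0,i-1]^2$ with $d:=w+(0,b)-(v_1,v_2)=(x'-v_1,\,y'+b-v_2)\in D$, and recall that $D$ is the cone spanned by the rays through $(-p,1)$ and $(1,-p^2)$. If $x'=i$, then $d_1=i-v_1\ge1$, so the defining inequality $p^2d_1+d_2\le0$ gives $d_2\le-p^2d_1\le-p^2$, hence $y'\le v_2-b-p^2\le i-1-p^2$. Set $w'':=(i-1,\,y'+p^2)$; since $0\le y'+p^2\le i-1$ we have $w''\in[0,i-1]^2$, and $w-w''=(1,-p^2)\in D$, so $w\prec w''$. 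If instead $y'=i$, then $d_2=i+b-v_2\ge1$, so $d_1+pd_2\le0$ gives $d_1\le-p$ and hence $x'\le i-1-p$; set $w'':=(x'+p,\,i-1)\in[0,i-1]^2$, for which $w-w''=(-p,1)\in D$.

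It remains to verify $w''\in J+D+(0,-b)$, i.e. $w''+(0,b)-(v_1,v_2)\in D$. In the first case this vector equals $d+(-1,p^2)=(d_1-1,\,d_2+p^2)$; the inequality $p^2(d_1-1)+(d_2+p^2)=p^2d_1+d_2\le0$ is immediate from $d\in D$, while the other inequality $(d_1-1)+p(d_2+p^2)=(d_1+pd_2)+(p^3-1)\le0$ follows from $d_1+pd_2\le d_1(1-p^3)\le1-p^3$, using $d_2\le-p^2d_1$ and $d_1\ge1$. The second case is the mirror image: $d+(p,-1)\in D$, where $(d_1+p)+p(d_2-1)=d_1+pd_2\le0$ is immediate and $p^2(d_1+p)+(d_2-1)=(p^2d_1+d_2)+(p^3-1)\le0$ follows from $p^2d_1+d_2\le d_2(1-p^3)\le1-p^3$ since $d_2\ge1$. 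This finishes the argument.

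The substantive point, and the only place any care is required, is the choice of $w''$: one must leave the face $\{x=i\}$ (resp. $\{y=i\}$) along the steep extreme ray of $D$, and the two facts that make this work — that $w''$ lands inside $[0,i-1]^2$, and that $d$ plus the correction term stays in $D$ — both rest on the quantitative shape of $D$, which is pinched between the slopes $-\tfrac1p$ and $-p^2$: a single unit of first-coordinate excess forces the second coordinate below $-p^2$, and a single unit of second-coordinate excess forces the first below $-p$. All degenerate situations are vacuous: if $i-1-p^2<0$ then the face $\{x=i\}$ carries no point of $J+D+(0,-b)$ at all, similarly for $\{y=i\}$ when $i-1-p<0$, and if $i=0$ the set $J+D+(0,-b)$ is empty.
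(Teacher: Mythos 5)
Your proof is correct, and it rests on the same geometric fact the paper uses: to exit the offending face of $[0,i]^2$ while staying inside both the translated cone $J+D+(0,-b)$ and above the original point in the partial order, one must step along an extreme ray of $D$, whose slopes $-\tfrac1p$ and $-p^2$ are exactly steep (resp.\ shallow) enough to make this work. The bookkeeping, however, is organized differently. The paper fixes a source $(x',y')\in J+(0,-b)$ dominating $(x,y)$ and cases on $\operatorname{sign}(y')$: when $y'\ge0$ the source itself already sits in $[0,i-1]^2$ and one is done immediately; when $y'<0$ it shows $(x,y)$ must lie on the face $\{y=i\}$, and then slides down the ray of slope $-\tfrac1p$ emanating from $(x',y')$ until reaching the line $y=i-1$, landing at $(x'-(i-1-y')p,\,i-1)$. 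You instead case directly on which coordinate of $w$ equals $i$ and, in both cases, take a single unit step from $w$ itself along the appropriate extreme ray — $(1,-p^2)$ off the face $\{x=i\}$, $(-p,1)$ off the face $\{y=i\}$ — verifying by hand that the step keeps you inside the shifted cone. Your version is more symmetric (it never invokes the ``free'' case where the source already lies in $[0,i-1]^2$, which the paper needs in order to discharge the $\{x=i\}$ face), and the verification that $d\pm$(ray)$\,\in D$ is a clean two-inequality check. The paper's version uses transitivity of $\prec$ rather than recomputing cone membership, which is slightly lighter on arithmetic but requires first observing that $y'<0$ forces $y=i$. Neither approach has an advantage in generality; they are two equally valid parametrizations of the same one-step reduction.
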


\begin{proof}
We only have to prove that (\ref{L5.5B}) $\Rightarrow$ (\ref{L5.5A}). Let $(x,y)\in[J+D+(0,-b)]\cap[0,i]^2$, we want to show that $(x,y)\in K$.

If $(x,y)\in[0,i-1]^2$, we are done by (\ref{L5.5B}). So assume $(x,y)\notin[0,i-1]^2$, i.e., $x=i$ or $y=i$.

There exists $(x',y')\in J+(0,-b)$ such that $(x,y)\prec (x',y')$. 
If $y'\ge 0$, then $(x',y')\in [0,i-1]^2$, hence $(x',y')\in[J+D+(0,-b)]\cap [0,i-1]^2\subset
K\cap [0,i-1]^2\subset K$. Therefore $(x,y)\in K$.

If $y'<0$, since $(x,y)\prec (x',y')$, we must have $x<x'$. By the assumption, $y=i$. From Figure 25, we have
\[
(x,i)\prec\bigl(x'-(i-1-y')p,\,i-1\bigr)\prec (x',y')
\]
and
\[
x'-(i-1-y')p\in [x,x']\subset [0,i-1].
\]
Hence $\bigl(x'-(i-1-y')p,\,i-1\bigr)\in [J+D+(0,-b)]\cap [0,i-1]^2\subset
K\cap [0,i-1]^2\subset K$. Therefore, we also have $(x,y)\in K$.
\end{proof}

\vskip5mm
\setlength{\unitlength}{5mm}
\[
\begin{picture}(12,5)
\put(0,4){\line(1,0){10}}
\put(2,5){\line(2,-1){10}}
\put(2,4){\makebox(0,0){$\scriptstyle \bullet$}}
\put(6,3){\makebox(0,0){$\scriptstyle \bullet$}}
\put(10,1){\makebox(0,0){$\scriptstyle \bullet$}}
\put(2,3.8){\makebox(0,0)[t]{$\scriptstyle (x,i)$}}
\put(6,3.2){\makebox(0,0)[bl]{$\scriptstyle (x'-(i-1-y')p,\, i-1)$}}
\put(10,1.2){\makebox(0,0)[bl]{$\scriptstyle (x',y')$}}
\put(3,4.5){\makebox(0,0)[bl]{$\scriptstyle \text{slope}=-\frac 1p$}}

\end{picture}
\]
\[
\text{Figure 25. Proof of Lemma~\ref{L5.4}}
\]
\vskip3mm

\begin{lem}\label{L5.7}

Let $J$ and $K$ be ideals of $[0,i-1]^2$ where $i\ge p$ and $J\ne[0,i-1]^2$, $J\ne\emptyset$.
Let $b,c\ge 0$ be integers. Let $\hat J$ be the largest ideal of $[0,i]^2$ such that $\hat J\cap[0,i-1]^2=J$ and $\hat K$ the largest ideal of $[0,i]^2$ such that $\hat K\cap[0,i-1]^2=K$. If
\begin{equation}\label{5.7A}
\bigl[J+(b,-c)+D\bigr]\cap[0,i-1]^2\subset K,
\end{equation}
then
\begin{equation}\label{5.7D}
\bigl[\hat J+(b,-c)+D\bigr]\cap[0,i]^2\subset \hat K.
\end{equation}
\end{lem}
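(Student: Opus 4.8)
\emph{Proof proposal.} The plan is to follow the template of Lemmas~\ref{L4.3} and~\ref{L4.4}: enlarge the rectangle, use Lemma~\ref{L4.2} to turn both the hypothesis and the conclusion into containments of translated ideals inside maximal extensions, and then compare the two relevant boundary walks.

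Set $R=[0,b+i]\times[-c,i]$ and let $\bar K$ be the largest ideal of $R$ with $\bar K\cap[0,i-1]^2=K$. From the extension identities of Section~3 (in particular $\overline{(\overline Z_{U_2})}_{U_1}=\overline Z_{U_1}$ together with $(\overline Z_{U_2})|_{U_3}=Z$), restricting a highest extension to an intermediate rectangle again yields the highest extension there; hence $\bar K\cap[0,i]^2=\hat K$, and $\bar K\cap\bigl([0,b+i-1]\times[-c,i-1]\bigr)$ is the largest ideal of $[0,b+i-1]\times[-c,i-1]$ whose trace on $[0,i-1]^2$ is $K$. Now I apply Lemma~\ref{L4.2}, equivalence (i)$\Leftrightarrow$(ii): with the square $[0,i-1]^2$ it turns the hypothesis~(\ref{5.7A}) into $J+(b,-c)\subset\bar K$, and with the square $[0,i]^2$ it turns the conclusion~(\ref{5.7D}) into $\hat J+(b,-c)\subset\bar K$ (here the second factor in Lemma~\ref{L4.2}(ii), namely $(b,-c)+[0,i]^2$, automatically contains $\hat J+(b,-c)$). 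Thus the lemma reduces to the single implication
\[
J+(b,-c)\subset\bar K\ \Longrightarrow\ \hat J+(b,-c)\subset\bar K .
\]

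To prove this, observe that the hypotheses $J\neq\emptyset$ and $J\neq[0,i-1]^2$ make $W:=\omega_{[0,i-1]^2}(J)$ a genuine walk, and $i\ge p$ prevents $W$ from being a single horizontal step, so the highest-extension rules of Section~3 apply cleanly. The boundary $\omega\bigl(\hat J+(b,-c)\bigr)=\overline W_{[0,i]^2}+(b,-c)$ is the highest extension of $W+(b,-c)$ from $[b,b+i-1]\times[-c,i-1-c]$ up to $[b,b+i]\times[-c,i-c]\subset R$, and the containment $J+(b,-c)\subset\bar K$ says exactly that $W+(b,-c)$ lies weakly below the restriction of $\omega(\bar K)$ to $[b,b+i-1]\times[-c,i-1-c]$. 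One then checks that the \emph{new} steps of $\overline W_{[0,i]^2}+(b,-c)$ — those in the top strip $i-1-c\le y\le i-c$ and the right strip $b+i-1\le x\le b+i$ — are generated by the very same local rules (maximal vertical steps alternating with unit horizontal steps near the top; maximal horizontal steps alternating with unit vertical steps near the right) that generate the corresponding portion of $\omega(\bar K)$. Since the two walks are already weakly ordered on the smaller rectangle, the new steps of the first remain weakly below the new steps of the second, giving $\overline W_{[0,i]^2}+(b,-c)\le\omega(\bar K)$, i.e.\ $\hat J+(b,-c)\subset\bar K$.

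The main obstacle is precisely this last walk comparison. Making ``the new steps obey the same rules'' rigorous requires the same careful bookkeeping as in the proof of Lemma~\ref{L4.3}, where two extension walks had to be recognized as a single legitimate walk: one must control how the extension meets the corners of $R$, and, when $b\ge1$ or $c\ge1$, deal with the fact that the translated square $[b,b+i]\times[-c,i-c]$ touches the top and right edges of $R$ in its interior rather than along $R$'s boundary, as well as the degenerate subcases in which an extension step is truncated by an edge. I expect to organize this as a ``near the top edge'' case and a ``near the right edge'' case, in the spirit of the boundary case analyses in Lemmas~\ref{L5.2}--\ref{L5.4}, with an accompanying figure of the two walks in the enlarged domain in the style of Figures~10 and~11.
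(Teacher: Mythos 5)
Your reduction is clean and correct: setting $R=[0,b+i]\times[-c,i]$, letting $\bar K$ be the highest extension of $K$ to $R$, and applying Lemma~\ref{L4.2} on each square does turn (\ref{5.7A}) and (\ref{5.7D}) respectively into $J+(b,-c)\subset\bar K$ and $\hat J+(b,-c)\subset\bar K$, the point being that the same $\bar K$ serves both because restricting a highest extension to an intermediate rectangle gives the highest extension there. That reformulation, which amounts to showing $\overline{\bigl(W+(b,-c)\bigr)}_{T'}\le\omega(\bar K)\bigm|_{T'}$ for $T'=[b,b+i]\times[-c,i-c]$, is a legitimate restatement of the lemma. But it is a restatement, not a reduction: this walk comparison carries all the content, and the argument you offer for it does not go through. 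Your claim is that the new steps of $\overline{\bigl(W+(b,-c)\bigr)}_{T'}$ in the top strip $i-1-c\le y\le i-c$ and the right strip $b+i-1\le x\le b+i$ obey the same local extension rules as ``the corresponding portion of $\omega(\bar K)$.'' For the right strip this is defensible, because $T'$ and $R$ share the edge $x=b+i$, so both walks are there doing a lower-right extension. For the top strip it is simply false once $c>0$: the top edge $y=i-c$ of $T'$ then sits in the \emph{interior} of $R$, and in the band $i-1-c\le y\le i-c$ the walk $\omega(\bar K)=\overline Z_R$ is not an upper-left extension obeying the max-vertical/unit-horizontal rule at all — it is the original walk $Z$ if $i-1-c\ge 0$, or part of the lower-right (max-horizontal/unit-vertical) extension if $i-1-c<0$. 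So the two walks you want to compare are highest extensions relative to rectangles with differently placed corners, and their ``new steps'' live in different regions; ``same local rules'' has no footing. You flag the comparison as the main obstacle and defer it, but the heuristic you propose is not merely incomplete — it rests on a misidentification that would have to be abandoned, not just tightened.

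The paper's proof sidesteps exactly this difficulty by working with the lowest extension $Y=\underline{\omega(\hat J+(b,-c))}_{R}$ instead. Because $T'$ and $R$ share their right and bottom edges, the only new steps of $Y$ lie in the upper-left part of $R\setminus T'$, and the crucial identity~(\ref{5.7B}) exhibits $Y$ as the union of two walks that agree on $T$: the lowest extension $\underline{\bigl(W+(b,-c)\bigr)}_{[0,b+i-1]\times[-c,i]}$ and $\omega(\hat J+(b,-c))$. (The hypotheses $J\neq\emptyset$, $J\neq[0,i-1]^2$, $i\ge p$ are used precisely here, to ensure the glued object is a legitimate walk, exactly as in the proof of Lemma~\ref{L4.4}.) The hypothesis~(\ref{5.7A}) then controls $Y|_{[0,i-1]^2}$, and since $\overline Z_R$ is the highest walk in $R$ restricting to $Z$, one gets $Y\le\overline Z_R$ and hence~(\ref{5.7D}) after restricting to $[0,i]^2$. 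If you want to carry your reduction through, you would still need an analogue of~(\ref{5.7B}) — a structural decomposition, not a local-rules matching — and at that point you are doing essentially what the paper does.
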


\begin{proof}
Let $\omega(J)=W$ and $\omega(K)=Z$. Then $\omega(\hat K)=\overline Z_{[0,i]^2}$,
\[
\omega\bigl(\hat J+(b,-c)\bigr)=\overline{(W+(b,-c))}_{[b,b+i]\times[-c,-c+i]},
\]
and 
\begin{equation}\label{I4}
\begin{split}
\omega\bigl(\bigl[\hat J+(b,-c)+D\bigr]\cap[0,i]^2\Bigr)\,&
=\underline{\omega(\hat J+(b,-c))}_{[0,b+i]\times[-c,i]}\Bigm|_{[0,i]^2}\cr
&=Y|_{[0,i]^2},\cr
\end{split}
\end{equation}
where
\[
Y=
\underline{\bigl[ \overline{(W+(b,-c))}_{[b,b+i]\times[-c,-c+i]}\bigr]}_{[0,b+i]\times[-c,i]}.
\]
Since $J\ne[0,i-1]^2$ and $J\ne\emptyset$, we have $\hat J\ne[0,i]^2$ and $\hat J\ne\emptyset$. Thus $\omega(\hat J+(b,-c))$
is neither $\emptyset$ nor the single point $(b+i,-c+i)$. Since $i\ge p$, $\omega(\hat J+(b,-c))$ is not a single horizontal step. Therefore, the extension from $\omega(\hat J+(b,-c))$ to 
$Y$ requires the same additional steps as the extension from $W+(b,-c)$ to $\underline{(W+(b,-c))}_{[0,b+i-1]\times[-c,i]}$. (See Figure 26.) Thus
$Y$ is the union (in the obvious sense) of
\begin{equation}\label{5.7B}
\underline{(W+(b,-c))}_{[0,b+i-1]\times[-c,i]}\quad\text{and}\quad
\omega(\hat J+(b,-c)).
\end{equation}

By (\ref{5.7A}), we have 
\begin{equation}\label{5.7C}
\underline{(W+(b,-c))}_{[0,b+i-1]\times[-c,i-1]}\bigm|_{[0,i-1]^2}\le Z.
\end{equation}
By (\ref{5.7B}), 
$Y$ is an extension of $\underline{(W+(b,-c))}_{[0,b+i-1]\times[-c,i]}$, hence an extension of
$\underline{(W+(b,-c))}_{[0,b+i-1]\times[-c,i-1]}\bigm|_{[0,i-1]^2}$. Thus (\ref{5.7C}) gives
$Y\le \overline Z_{[0,b+i,]\times[-c,i]}$. Taking restriction on $[0,i]^2$, we have
\[
Y|_{[0,i]^2} \le \overline Z_{[0,i]^2}.
\]
Using (\ref{I4}), we have
\[
\omega\bigl(\bigl[\hat J+(b,-c)+D\bigr]\cap[0,i]^2\Bigr)\le \overline Z_{[0,i]^2}=\omega(\hat K),
\]
which proves (\ref{5.7D}).
\end{proof}

\vskip5mm
\setlength{\unitlength}{4mm}
\[
\begin{picture}(15,14)
\put(2,1){\line(1,0){0.1}}
\put(2,8){\line(1,0){0.1}}
\put(0,4){\vector(1,0){15}}
\put(2,0){\vector(0,1){14}}
\put(2,11){\line(1,0){7}}
\put(6,1){\line(1,0){7}}
\put(6,8){\line(1,0){7}}
\put(9,4){\line(0,1){7}}
\put(6,1){\line(0,1){7}}
\put(13,1){\line(0,1){7}}
\put(2,9){\line(1,0){1}}
\put(3,9){\line(0,-1){2}}
\put(3,7){\line(1,0){4}}
\put(7,7){\line(0,-1){1}}
\put(7,6){\line(1,0){3}}
\put(2,6){\line(1,0){2}}
\put(4,6){\line(0,-1){1}}
\put(4,5){\line(1,0){7}}
\put(11,5){\line(0,-1){2}}
\put(11,3){\line(1,0){3}}

\multiput(2,12)(0.5,0){16}{\line(1,0){0.3}}
\multiput(6,9)(0.5,0){16}{\line(1,0){0.3}}
\multiput(13.2,1)(0.5,0){2}{\line(1,0){0.3}}
\multiput(10,12)(0,-0.5){16}{\line(0,-1){0.3}}
\multiput(14,9)(0,-0.5){16}{\line(0,-1){0.3}}
\multiput(6,9)(0,-0.5){2}{\line(0,-1){0.3}}

\put(2,6){\makebox(0,0){$\scriptstyle \bullet$}}
\put(4,6){\makebox(0,0){$\scriptstyle \bullet$}}
\put(4,5){\makebox(0,0){$\scriptstyle \bullet$}}
\put(6,5){\makebox(0,0){$\scriptstyle \bullet$}}
\put(11,5){\makebox(0,0){$\scriptstyle \bullet$}}
\put(11,3){\makebox(0,0){$\scriptstyle \bullet$}}
\put(13,3){\makebox(0,0){$\scriptstyle \bullet$}}
\put(14,3){\makebox(0,0){$\scriptstyle \bullet$}}
\put(2,9){\makebox(0,0){$\scriptstyle \times$}}
\put(3,9){\makebox(0,0){$\scriptstyle \times$}}
\put(3,7){\makebox(0,0){$\scriptstyle \times$}}
\put(7,7){\makebox(0,0){$\scriptstyle \times$}}
\put(7,6){\makebox(0,0){$\scriptstyle \times$}}
\put(9,6){\makebox(0,0){$\scriptstyle \times$}}
\put(10,6){\makebox(0,0){$\scriptstyle \times$}}

\put(1.8,1){\makebox(0,0)[r]{$\scriptstyle -c$}}
\put(1.8,8){\makebox(0,0)[r]{$\scriptstyle i-1-c$}}
\put(1.8,11){\makebox(0,0)[r]{$\scriptstyle i-1$}}
\put(5.8,3.8){\makebox(0,0)[tr]{$\scriptstyle b$}}
\put(9,3.8){\makebox(0,0)[t]{$\scriptstyle i-1$}}

\put(1,-1){\makebox(0,0){$\scriptstyle \times$}}
\put(2,-1){\makebox(0,0){$\scriptstyle \times$}}
\put(1,-1){\line(1,0){1}}
\put(3,-1){\makebox(0,0)[l]{$\scriptstyle :\ \overline Z_{[0,i]^2}$}}
\put(1,-2.5){\makebox(0,0){$\scriptstyle \bullet$}}
\put(2,-2.5){\makebox(0,0){$\scriptstyle \bullet$}}
\put(1,-2.5){\line(1,0){1}}
\put(3,-2.5){\makebox(0,0)[l]{$\scriptstyle :\ Y$}}

\end{picture}
\]
\vskip1cm
\[
\text{Figure 26. Proof of Lemma~\ref{L5.7}}
\]
\vskip3mm
\begin{lem}\label{L5.9}
In Lemma~\ref{L5.1}, let $W_{i,j}=\omega(J_{i,j})$ ($0\le j<i$) and 
$W_i=\omega(J_i)$.
Set
\begin{equation}\label{P}
S_i=\underline{(W_{i,i-1}+(0,-p))}_{[0,i]\times[-p,i]}\bigm|_{[0,i]^2}
\end{equation}
and
\begin{equation}\label{Q}
\begin{split}
T_i=\,&\overline{(W_{i,i-1})}_{[0,i]^2}\wedge
\bigl[\overline{(W_{i,i-p})}_{[0,1+i]\times[0,i]}\bigm|_{[1,1+i]\times[0,i]}-(1,0)\bigr]
\wedge\cr
&\bigl[\overline{(W_{i,i-\beta_i})}_{[0,1+i]\times[-p^2+p\beta_i,i]}\bigm|_{[1,1+i]\times[-p^2+p\beta_i,-p^2+p\beta_i+i]}-(1,-p^2+p\beta_i)\bigr],\cr
\end{split}
\end{equation}
where $\beta_i$ is the largest integer such that $1\le\beta\le p-1$, 
$i-\beta_i\ge 0$ and $J_{i,i-\beta_i}\ne[0,i-1]^2$. (If $\beta_i$ does not exist, 
the last walk at the right hand side of (\ref{Q}) is ignored.)
Then {\rm (\ref{5-2})} and {\rm (\ref{5-3})} hold if and only if
\[
S_i\le W_i\le T_i.
\]
\end{lem}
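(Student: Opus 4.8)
The plan is to reduce conditions (\ref{5-2}) and (\ref{5-3}) to a system of inclusions between ideals of the squares $[0,i-1]^2$ and $[0,i]^2$ having exactly the shape of the forward-slicing system of Section~4, and then to transcribe that system into walk inequalities just as Corollary~\ref{C4.8} is obtained from Theorem~\ref{T4.6}. The only genuinely new feature is the mismatch between the square $[0,i]^2$ carrying $J_i$ and the square $[0,i-1]^2$ carrying the slices $J_{i,j}$; absorbing this mismatch is the role of Lemmas~\ref{L5.4} and~\ref{L5.7}, and it is the step I expect to require the most care.

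First I would apply the cross-section formula (\ref{x2y2z2}): for $0\le j<i$ it gives $\bigl(J_i\times\{i\}+\Delta\bigr)\cap\bigl({\Bbb R}^2\times\{j\}\bigr)=\bigl[J_i+D+(i-j)(\tfrac1p,0)\bigr]\times\{j\}$ and $\bigl(J_{i,j}\times\{j\}+\Delta\bigr)\cap V_i=\bigl[J_{i,j}+D-(i-j)(0,p)\bigr]\times\{i\}$, so that (\ref{5-2}) becomes $[J_i+D+(i-j)(\tfrac1p,0)]\cap[0,i-1]^2\subset J_{i,j}$ and (\ref{5-3}) becomes $[J_{i,j}+D-(i-j)(0,p)]\cap[0,i]^2\subset J_i$, for all $0\le j<i$. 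Writing $i-j=ap+b$ with $0\le b\le p-1$ and invoking Lemma~\ref{L3.1} splits the first family into $[J_i+D+(a,0)]\cap[0,i-1]^2\subset J_{i,j}$ together with $[J_i+D+(a+1,-p^2+pb)]\cap[0,i-1]^2\subset J_{i,j}$; invoking Lemma~\ref{L5.4} rewrites the second family as $[J_{i,j}+D-(i-j)(0,p)]\cap[0,i-1]^2\subset J_i\cap[0,i-1]^2$. What results is precisely the system (\ref{JiJj})--(\ref{i-ap-b1}) of Lemma~\ref{L4.1}(i), with $U$ and $n$ replaced by $[0,i-1]^2$ and $i-1$, with $J_{i-ap-b}$ replaced by $J_{i,\,i-ap-b}$, and with the role of the ``new'' top ideal played by $J_i$, an ideal of the one-larger square $[0,i]^2$ whose restriction to $[0,i-1]^2$ is all that enters the inclusions.

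Next I would run the proof of Theorem~\ref{T4.6} essentially verbatim to show that this system is equivalent to the four conditions: (i$'$) $J_i\cap[0,i-1]^2\subset J_{i,i-1}$; (ii$'$) $J_i\supset[J_{i,i-1}+D-(0,p)]\cap[0,i]^2$; (iii$'$) $[J_i+D+(1,0)]\cap[0,i-1]^2\subset J_{i,i-p}$ (null if $i<p$); (iv$'$) $[J_i+D+(1,-p^2+p\beta_i)]\cap[0,i-1]^2\subset J_{i,i-\beta_i}$ (null if $\beta_i$ does not exist). The forward implication is immediate, these being the instances $j=i-1,\,i-p,\,i-\beta_i$. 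For the converse one composes (ii$'$)--(iv$'$) with the forward consistency of $J_{i,0},\dots,J_{i,i-1}$, that is, with the fact that $\bigcup_j(J_{i,j}\times\{j\})$ is an ideal of $[0,i-1]^3$ and hence supplies all the intermediate inclusions among the $J_{i,j}$: the vertical compositions use Lemma~\ref{L4.3} and the mixed ones use Lemma~\ref{L4.4}, both read for ideals of $[0,i-1]^2$ (their proofs never used that the square was $[0,n]^2$), while Lemma~\ref{L5.7} is what promotes a composed inclusion living in $[0,i-1]^2$ to the $[0,i]^2$-inclusion needed to bound $J_i$. The stipulations $J_{i,i-\beta_i}\ne[0,i-1]^2$ and $i\ge p$ built into the choice of $\beta_i$ and into Lemma~\ref{L5.7} are precisely what keep the relevant boundary walks from degenerating into a single horizontal step, the obstruction flagged after Lemma~\ref{L4.4}; the cases $J_{i,j}\in\{\emptyset,\,[0,i-1]^2\}$ and $i<p$ are dispatched by hand, as in the first paragraph of the proof of Theorem~\ref{T4.6}.

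Finally I would convert (i$'$)--(iv$'$) into $S_i\le W_i\le T_i$ exactly as Corollary~\ref{C4.8} follows from Theorem~\ref{T4.6}, using the bijection $\iota\leftrightarrow\omega$ and the descriptions of highest and lowest extensions from Section~3. Condition (i$'$) says that $J_i$ lies in the largest ideal of $[0,i]^2$ whose restriction to $[0,i-1]^2$ is $J_{i,i-1}$, i.e.\ $W_i\le\overline{(W_{i,i-1})}_{[0,i]^2}$ --- this extension operator is the only structural novelty relative to the formula for $Y_i'$ in Corollary~\ref{C4.8}, the remaining differences being the harmless substitutions $n\mapsto i$ and $W_{i-\ell}\mapsto W_{i,i-\ell}$. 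The Lemma~\ref{L4.2}-type restatements turn (iii$'$) and (iv$'$) into the other two meet-factors of $T_i$, and (ii$'$), which asserts $J_i\supset[J_{i,i-1}+D-(0,p)]\cap[0,i]^2$, becomes $W_i\ge\underline{(W_{i,i-1}+(0,-p))}_{[0,i]\times[-p,i]}\bigm|_{[0,i]^2}=S_i$. The main obstacle throughout is the careful bookkeeping of the $[0,i-1]^2$-versus-$[0,i]^2$ discrepancy at each composition and at each passage to walks; away from that the argument is a direct transcription of Section~4.
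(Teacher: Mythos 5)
Your proposal is correct and follows essentially the same route as the paper: reduce (\ref{5-2}) and (\ref{5-3}) to the system (\ref{5-7})--(\ref{5-9}), derive the four extremal conditions (your (i$'$)--(iv$'$), the paper's (\ref{5.8A})--(\ref{5.8C}) plus (\ref{Ji,i-1})) as special cases, recover the full system via Lemmas~\ref{L4.3}, \ref{L4.4}, \ref{L5.4}, \ref{L5.7} and the machinery of Theorem~\ref{T4.6}, and finally transcribe into walk inequalities via Lemma~\ref{L4.2} exactly as in Corollary~\ref{C4.8}. One small imprecision worth fixing: the parenthetical claim that ``the restriction of $J_i$ to $[0,i-1]^2$ is all that enters the inclusions'' is false --- for instance $[J_i+D+(1,0)]\cap[0,i-1]^2$ can strictly contain $[(J_i\cap[0,i-1]^2)+D+(1,0)]\cap[0,i-1]^2$ when $J_i$ touches the boundary $x=i$ --- which is precisely why the paper applies Lemma~\ref{L5.7} \emph{first}, lifting the $J_{i,j}$-system to the $\hat J_{i,j}$-system in $[0,i]^2$, and only then runs the Lemma~\ref{L4.4}/Theorem~\ref{T4.6} compositions in $[0,i]^2$ with $J_i$ itself, rather than composing in $[0,i-1]^2$ and promoting afterward.
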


\begin{proof}
We will show that (\ref{5-3}) is equivalent to $S_i\le W_i$ and that (\ref{5-2}) is equivalent to $W_i\le T_i$.

First we claim that (\ref{5-3}) is equivalent to
\begin{equation}\label{5-7}
\bigl[J_{i,j}+D-(i-j)(0,p)\bigr]\cap[0,i]^2\subset J_i
\quad\text{for all}\ 0\le j<i
\end{equation}
and that (\ref{5-2}) is equivalent to
\begin{equation}\label{5-8}
\bigl[J_i+D+(a,0)\bigr]\cap[0,i-1]^2\subset J_{i,i-ap-b}
\end{equation}
and
\begin{equation}\label{5-9}
\bigl[J_i+D+(a+1,-p^2+bp)\bigr]\cap[0,i-1]^2\subset J_{i,i-ap-b},
\end{equation}
where $a,b\in{\Bbb Z},\ a\ge 0,\ 0\le b\le p-1$ and $ap+b\le i$.
The proof of these claims is the same as the proof of Lemma~\ref{L4.1} (i).

Therefore, it suffices to establish the following relations:
\vskip2mm

\begin{enumerate}
\item[] (\ref{5-7}) $\Leftrightarrow$ $S_i\le W_i$;

\item[] (\ref{5-8}) and (\ref{5-9}) $\Leftrightarrow$ $W_i\le T_i$.
\end{enumerate}
\vskip2mm

{\it Proof of} ``(\ref{5-7}) $\Leftrightarrow$ $S_i\le W_i$''. By Lemma~\ref{L5.4}, (\ref{5-7}) is equivalent to
\begin{equation}\label{5.6A}
\bigl[J_{i,j}+D-(i-j)(0,p)\bigr]\cap[0,i-1]^2\subset J_i\cap[0,i-1]^2\quad \text{for all}\
0\le j <i.
\end{equation}
Since $\bigcup_{j=0}^{i-1}(J_{i,j}\times\{j\})$ is an ideal of $[0,i-1]^3$, we have (cf. (\ref{JiJj}))
\begin{equation}\label{I5}
\bigl[ J_{i,j}+D-(i-1-j)(0,p)\bigr]\cap [0,i-1]^2\subset J_{i,i-1}\quad\text{for all}\ 0\le j<i.
\end{equation}
Note that Lemma~\ref{L4.3} remains true with $[0,i-1]^2$ in place of $U$. Thus by Lemma~\ref{L4.3} and 
(\ref{I5}),
we see that
(\ref{5.6A}) holds for all $0\le j<i$ if and only if it holds for $j=i-1$,
i.e., if and only if
\begin{equation}\label{5.6B}
\bigl[J_{i,i-1}+D-(0,p)\bigr]\cap[0,i-1]^2\subset J_i\cap[0,i-1]^2.
\end{equation}
By Lemma~\ref{L5.4} again, (\ref{5.6B}) is equivalent to
\begin{equation}\label{Ji,i-1}
\bigl[J_{i,i-1}+D-(0,p)\bigr]\cap[0,i]^2\subset J_i.
\end{equation}
In terms of boundaries, (\ref{Ji,i-1}) is equivalent to $S_i\le W_i$.
\vskip2mm

{\it Proof of} ``(\ref{5-8}) and (\ref{5-9}) $\Leftrightarrow$ $W_i\le T_i$''. Let $\hat J_{i,j}$ ($0\le j<i$) be the largest ideal of $[0,i]^2$ such that
$\hat J_{i,j}\cap[0,i-1]^2=J_{i,j}$. We claim that $W_i\le T_i$
is equivalent to the following three conditions:
\begin{equation}\label{5.8A}
J_i\subset\hat J_{i,i-1}.
\end{equation}
\begin{equation}\label{5.8B}
\bigl[J_i+D+(1,0)\bigr]\cap[0,i]^2\subset\hat J_{i,i-p}\quad\text{if}\ i\ge p.
\end{equation}
\begin{equation}\label{5.8C}
\bigl[J_i+D+(1,-p^2+p\beta_i)\bigr]\cap[0,i]^2\subset\hat J_{i,i-\beta_i}.
\end{equation}
(If $\beta_i$ does not exist, condition (\ref{5.8C}) is null.)

In fact, (\ref{5.8A}) is equivalent to
\[
W_i\le \overline{(W_{i,i-1})}_{[0,i]^2}.
\]
By Lemma~\ref{L4.2}, (\ref{5.8B}) is equivalent to
\[
\begin{split}
W_i+(1,0)\,&\le\overline{\bigl(\overline{(W_{i,i-p})}_{[0,i]^2}\bigr)}_{[0,1+i]\times[0,i]}\bigm|_{[1,1+i]\times[0,i]}\cr
&=\overline{(W_{i,i-p})}_{[0,1+i]\times[0,i]}\bigm|_{[1,1+i]\times[0,i]},\cr
\end{split}
\]
and (\ref{5.8C}) is equivalent to
\[
\begin{split}
&W_i+(1,-p^2+p\beta_i)\cr
\le\,& \overline{\bigl(\overline{(W_{i,i-\beta_i})}_{[0,i]^2}\bigr)}_{[0,1+i]\times[-p^2+p\beta_i,i]}\bigm|_{[1,1+i]\times[-p^2+p\beta_i,-p^2+p\beta_i+i]}\cr
&\,=\overline{(W_{i,i-\beta_i})}_{[0,1+i]\times[-p^2+p\beta_i,i]}\bigm|_{[1,1+i]\times[-p^2+p\beta_i,-p^2+p\beta_i+i]}.\cr
\end{split}
\]
Thus (\ref{5.8A}) -- (\ref{5.8C}) together are equivalent to $W_i\le T_i$.

Therefore, it remains to show that (\ref{5-8}) and (\ref{5-9}) $\Leftrightarrow$  (\ref{5.8A}) -- (\ref{5.8C}).
\vskip2mm

{\it Proof of} ``(\ref{5-8}) and (\ref{5-9}) $\Rightarrow$  (\ref{5.8A}) -- (\ref{5.8C})''.
In (\ref{5-8}), letting $a=0$ and $b=1$, we obtain
\[
J_i\cap[0,i-1]^2\subset J_{i,i-1}.
\]
Hence $J_i\subset\hat J_{i,i-1}$. In a similar way, (\ref{5.8B}) follows from (\ref{5-8}) with
$a=1$, $b=0$; (\ref{5.8C}) follows from (\ref{5-9}) with $a=0$, $b=\beta_i$.
\vskip2mm

{\it Proof of} ``(\ref{5-8}) and (\ref{5-9}) $\Leftarrow$  (\ref{5.8A}) -- (\ref{5.8C})''.
First assume $i<p$. In this case, the partial order $\prec$ in $[0,i]^3$ is the cartesian product of linear orders. 
Recall that (\ref{5-8}) and (\ref{5-9}) together are equivalent to (\ref{5-2}) and note that (\ref{5-2}) is equivalent to
\[
(J_i\times\{i\}+\Delta)\cap[0,i-1]^3\subset \bigcup_{j=0}^{i-1}(J_{i,j}\times\{j\}).
\]
Thus it suffices to show that
\begin{equation}\label{5.8D}
J_i\cap[0,i-1]^2\subset J_{i,j}\quad\text{for all}\ 0\le j<i.
\end{equation}
Since $\bigcup_{j=0}^{i-1}(J_{i,j}\times\{j\})$ is an ideal of $[0,i-1]^3$, we have $J_{i,j}\subset J_{i,j-1}$ for
all $0\le j<i$. By (\ref{5.8A}), we also have $J_i\cap[0,i-1]^2\subset J_{i,i-1}$. Hence (\ref{5.8D}) holds.

Now assume $i\ge p$. Since $\bigcup_{j=0}^{i-1}(J_{i,j}\times\{j\})$ is an ideal of $[0,i-1]^3$, by Lemma~\ref{L4.1} (i), we have 
\[
\begin{cases}
[J_{i,j}+D+(a,0)]\cap[0,i-1]^2\subset J_{i,j-ap-b}\cr
[J_{i,j}+D+(a+1,-p^2+bp)]\cap[0,i-1]^2\subset J_{i,j-ap-b}\cr
\end{cases}
\]
for $a\ge 0,\ 0\le b\le p-1$ and $ap+b\le j<i$. By Lemma~\ref{L5.7}, we have
\begin{equation}\label{5.8E}
\begin{cases}
[\hat J_{i,j}+D+(a,0)]\cap[0,i]^2\subset \hat J_{i,j-ap-b}\cr
[\hat J_{i,j}+D+(a+1,-p^2+bp)]\cap[0,i]^2\subset \hat J_{i,j-ap-b}\cr
\end{cases}
\end{equation}
for $a\ge 0,\ 0\le b\le p-1$ and $ap+b\le j<i$. Note that $\beta_i$ is also the largest integer such that $1\le \beta_i\le p-1$, $i-\beta_i\ge 0$ and $\hat J_{i,i-\beta_i}\ne[0,i]^2$. By (\ref{5.8B}), (\ref{5.8C}), (\ref{5.8E})
and the proof of Theorem~\ref{T4.6}, we have
\begin{equation}\label{5.8F}
\begin{cases}
[J_i+D+(a,0)]\cap[0,i]^2\subset \hat J_{i,i-ap-b}\cr
[J_i+D+(a+1,-p^2+bp)]\cap[0,i]^2\subset\hat J_{i,i-ap-b}\cr
\end{cases}
\end{equation}
for $a\ge 0,\ 0\le b\le p-1$ and $ap+b\le j<i$. Conditions (\ref{5-8}) and (\ref{5-9}) immediately follow form
(\ref{5.8F}).
\end{proof}

\noindent{\bf Remark}. In Lemma~\ref{L5.9}, we always have
\[
S_i\le T_i.
\]
In fact, by Lemma~\ref{L2.1} (i), there is at least one $J_i$ satisfying 
all the conditions in Lemma~\ref{L5.1}. Thus there exists at least one walk $W_i$ in $[0,i]^2$ such that $S_i\le W_i\le T_i$.

\begin{defn}\label{D5.1}
Let $0\le i\le n$ and let $J_i$ be an ideal of $[0,i]^2$. We call $J_i$ of 
\begin{enumerate}
\item[]type I \quad if $J_i\cap([i-1,i]\times[0,i])
=\emptyset$;

\item[]type II \quad if $J_i\cap([i-1,i]\times[0,i])\ne\emptyset$ but $J_i\cap(\{i\}\times[0,i])
=\emptyset$; 

\item[]type III \quad if $J_i\cap(\{i\}\times[0,i])\ne\emptyset$.
\end{enumerate}
\end{defn}

\begin{thm}\label{T5.10}
Let $1\le i\le n$ and let $J_j$ ($0\le j\le i$) be an ideal $[0,j]^2$. Assume that
$J_0,\dots,J_{i-1}$ is a consistent sequence of ideals and write
\[
\bigcup_{j=0}^{i-1}\bigl[(J_j\times\{j\})\cup(J_j\times\{j\})^A\cup(J_j\times\{j\})^{A^2}\bigr]
=\bigcup_{j=0}^{i-1}(J_{i,j}\times\{j\}),
\]
where $J_{i,j}$ is an ideal of $[0,i-1]^2$. Let $W_{i,j}=\omega(J_{i,j})$ ($0\le j <i$) and $W_i=\omega(J_i)$
and let $S_i$ and $T_i$ be as in Lemma~\ref{L5.9}.
\begin{enumerate}

\item[(i)] $J_i$ is of type I and consistent with $J_0,\dots,J_{i-1}$ if and only if 
\begin{equation}\label{5.10.1}
(0,i)\notin\iota(S_i),\quad (i-1,0)\notin\iota(S_i)
\end{equation}
and
\[
S_i\le W_i\le T'_i,
\]
where
\[
T'_i=T_i\wedge A_i\wedge B_i,
\]
$A_i$ is the highest walk in $[0,i]^2$ starting from $(0,i-1)$ and $B_i$ is the highest walk in $[0,i]^2$ ending at $(i-2,0)$. 

\item[(ii)] $J_i$ is of type II and consistent with $J_0,\dots,J_{i-1}$ if and only if
\begin{equation}\label{5.10.2}
(0,i)\notin\iota(S_i),\quad (i,0)\notin\iota(S_i)
\end{equation}
and 
\[
\begin{cases}
W_i|_{[i-1,i]\times[0,i]}=\bigl((i-1,v),\,(i-1,0)\bigr)\cr
\Gamma_i\le W_i|_{[0,i-1]\times[0,i]}\le\Lambda_i\cr
\end{cases}
\]
for some integer $v$ satisfying
\begin{equation}\label{5.10.3}
\begin{cases}
0\le v<\min\{p^2,\frac{p-1}pi+\frac 1p\}\cr
(i-1,v)\in\iota(T_i),\ (i-1,v+i)\notin\iota(X_i)\cr
(v,i-p)\in \iota(T_i)\quad \text{if}\ i\ge p\cr
\end{cases}
\end{equation} 
and for the walks $\Gamma_i$ and $\Lambda_i$ defined as follows.
\[
\Gamma_i=(S_i\vee E_{i,v})|_{[0,i-1]\times[0,i]}\vee C_{i,v},
\]
\[
\Lambda_i=(T_i\wedge A_i)|_{[0,i-1]\times[0,i]}\wedge D_{i,v},
\]
where $C_{i,v}$ is the lowest walk in $[0,i-1]\times[0,i]$ ending at $(i-1,v)$,
$D_{i,v}$ is the highest walk in $[0,i-1]\times[0,i]$ ending at $(i-1,v)$, and
\[
E_{i,v}=
\begin{cases}
\text{the lowest walk in $[0,i]^2$ passing through $(v,i-p)$},&\text{if}\ i\ge p,\cr
\emptyset,&\text{if}\ i<p.\cr
\end{cases}
\]

\item[(iii)] $J_i$ is of type III and consistent with $J_0,\dots,J_{i-1}$ if and only if
\[
\Phi_i\le W_i\le \Psi_i
\]
for some integer $u$ satisfying
\begin{equation}\label{5.10.6}
\begin{cases}
0\le u\le i\cr
(i,u)\in\iota(T_i),\ (u,i)\in\iota(T_i)\cr
(i,u+1)\notin\iota(S_i),\ (u+1,i)\notin\iota(S_i)\cr
\end{cases}
\end{equation}
and for the walks $\Phi_i$ and $\Psi_i$ defined as follows.
\[
\Phi_i=S_i\vee F_{i,u}\vee M_{i,u},
\]
\[
\Psi_i=T_i\wedge G_{i,u}\wedge N_{i,u},
\] 
where $F_{i,u}$ is the lowest walk in $[0,i]^2$ starting from $(u,i)$,
$G_{i,u}$ is the highest walk in $[0,i]^2$ starting from $(u,i)$,
$M_{i,u}$ is the lowest walk in $[0,i]^2$ ending at $(i,u)$,
$N_{i,u}$ is the highest walk in $[0,i]^2$ ending at $(i,u)$.

\end{enumerate}
\end{thm}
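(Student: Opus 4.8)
The plan is to strip the consistency condition down to a short list of boundary conditions via the lemmas already proved in this section, and then to translate that list into walk inequalities by a case analysis on the type of $J_i$.

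First I would combine Lemma~\ref{L5.1} with Lemma~\ref{L5.2} and Lemma~\ref{L5.3}: since (\ref{5-1})--(\ref{5-3}) imply (\ref{5-4}), and since, granting (\ref{5-1})--(\ref{5-4}), condition (\ref{5-5}) is equivalent to (\ref{5-13}), $J_i$ is consistent with $J_0,\dots,J_{i-1}$ if and only if (\ref{5-1}), (\ref{5-13}), and the pair ``(\ref{5-2}) and (\ref{5-3})'' all hold; and by Lemma~\ref{L5.9} that last pair is exactly $S_i\le W_i\le T_i$. So the whole theorem reduces to rewriting the conjunction of the $A$-symmetry condition (\ref{5-1}), the corner inequality (\ref{5-13}), and the sandwich $S_i\le W_i\le T_i$ in terms of the boundary walk $W_i=\omega(J_i)=((x_0,y_0),\dots,(x_k,y_k))$. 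The role of Definition~\ref{D5.1} is that these conditions are localized near the corner $(i,i)$: the type of $J_i$ records which of $x_k<i-1$, $x_k=i-1$, $x_k=i$ holds, and (\ref{5-1}) then forces the left endpoint $(x_0,y_0)$ accordingly.

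For type~I, $J_i\cap([i-1,i]\times[0,i])=\emptyset$ forces $x_k\le i-2$; by (\ref{5-1}) this gives $y_0<i$, so (\ref{5-1}) holds vacuously, and since column $i-1$ of $J_i$ is empty (\ref{5-13}) holds vacuously too. Thus only $S_i\le W_i\le T_i$ remains, subject to $\iota(W_i)$ avoiding row $i$ and column $i-1$; these avoidances are exactly $W_i\le A_i$ and $W_i\le B_i$ (the highest walk starting at $(0,i-1)$, resp.\ ending at $(i-2,0)$), so the correct upper bound is $T_i'=T_i\wedge A_i\wedge B_i$, and $(0,i),(i-1,0)\notin\iota(S_i)$ is precisely the condition making $\{W_i:S_i\le W_i\le T_i'\}$ nonempty and hence equal to the set of type-I consistent walks (using $S_i\le T_i$ from the Remark after Lemma~\ref{L5.9}). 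For type~II, $x_k=i-1$, so $W_i$ ends with a straight vertical run from $(i-1,v)$ down to $(i-1,0)$, where $v=\max\{y:(i-1,y)\in J_i\}$ ranges over the values made admissible by (\ref{5.10.3}); here (\ref{5-1}) is again vacuous, but (\ref{5-13}) now becomes the genuine requirement $(v,i-p)\in J_i$ when $i\ge p$, which is encoded by joining in the walk $E_{i,v}$ through $(v,i-p)$, and splitting $W_i$ into its restriction to $[0,i-1]\times[0,i]$ and its vertical tail yields the displayed sandwich $\Gamma_i\le W_i|_{[0,i-1]\times[0,i]}\le\Lambda_i$ together with the prescription of the tail. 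For type~III, (\ref{5-1}) is active and forces $y_0=i=x_k$ and $x_0=y_k=:u$, so $W_i$ runs from $(u,i)$ to $(i,u)$; here one checks, using the cross-section description (\ref{Iff3})--(\ref{Iff4}) of $\prec$ and the symmetry argument used in the proof of Lemma~\ref{L5.3}, that (\ref{5-13}) is automatically implied by the remaining conditions, so what is left is $S_i\le W_i\le T_i$ together with the constraints that $W_i$ start at $(u,i)$ and end at $(i,u)$, captured by $F_{i,u},G_{i,u}$ and $M_{i,u},N_{i,u}$; the admissibility condition (\ref{5.10.6}) on $u$ is just nonemptiness of the resulting walk interval.

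The main obstacle will be the bookkeeping in types~II and~III: one must check that the auxiliary walks $A_i,B_i,C_{i,v},D_{i,v},E_{i,v},F_{i,u},G_{i,u},M_{i,u},N_{i,u}$ genuinely encode ``passes through / starts at / ends at a prescribed corner point'' in a way compatible with restriction to $[0,i-1]\times[0,i]$, that meeting and joining them with $S_i$ and $T_i$ neither over- nor under-constrains, and that distinct parameter values $v$ (resp.\ $u$) give disjoint families while each consistent $J_i$ of the given type has a unique admissible value. The most delicate point is getting (\ref{5-13}) right --- a real constraint in type~II, automatic in type~III --- which forces one to track the column-$(i-1)$, column-$i$, and row-$(i-p)$ data of $J_i$ simultaneously, via the cross-sectional pictures (\ref{Iff2})--(\ref{Iff4}) and the argument behind Figure~24; once those translations are settled, the remainder is the routine passage between ideals and their boundary walks using the operations of Section~3.
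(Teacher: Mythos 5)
Your plan---reduce consistency to the conjunction of (\ref{5-1}), (\ref{5-13}) and the sandwich $S_i\le W_i\le T_i$ via Lemmas~\ref{L5.1}--\ref{L5.3} and \ref{L5.9}, then case-split on the type of $J_i$ and encode the residual start/end/corner constraints by the auxiliary walks $A_i,B_i,C_{i,v},D_{i,v},E_{i,v},F_{i,u},G_{i,u},M_{i,u},N_{i,u}$---is exactly the paper's strategy. The paper likewise derives necessity type by type from the sandwich plus (\ref{5-1})/(\ref{5-13}), and for sufficiency (written out for type~III) checks (\ref{5-1}), (\ref{5-2}), (\ref{5-3}) and (\ref{5-5}) directly, noting as you do that in type~III (\ref{5-13}) is automatic because $v-u\le p^2$ gives $(v,i-p)\prec(u,i)\in J_i$; the only small imprecision in your write-up is calling (\ref{5-1}) ``vacuous'' in types~I and~II, when in fact it supplies the real constraint $(0,i)\notin J_i$ that your $A_i$ correctly encodes.
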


\begin{proof}
{\it Necessity}.
We first show the necessity in cases (i) -- (iii). By Lemma~\ref{L5.9},
we have $S_i\le W_i\le T_i$.

(i) Since $J_i$ is of type I, $(i-1,0)\notin J_i$. By (\ref{5-1}), 
$(0,i)\notin J_i$. Thus $(0,i)\notin\iota(S_i)$,  
$(i-1,0)\notin\iota(S_i)$ and $W_i\le A_i\wedge B_i$. Hence $W_i\le T'_i$.

(ii) Since $J_i$ is of type II, we have 
\[
W_i|_{[i-1,i]\times[0,i]}=\bigl((i-1,v),\,(i-1,0)\bigr)
\]
for some $0\le v\le i$.  Since $(i,0)\notin J_i$, by (\ref{5-1}), $(0,i)\notin J_i$. Thus $(i-1,v)\in J_i$ implies that $v<p^2$ and 
$v+(i-1)\frac 1p<i$, i.e.,
\[
v<\min\bigl\{p^2,\ \frac{p-1}pi+\frac 1p\bigr\}.
\]
Clearly, $(0,i)\notin\iota(S_i)$, $(i,0)\notin\iota(S_i)$,
$(i-1,v)\in\iota(T_i)$ and $(i-1,v+1)\notin\iota(S_i)$. By (\ref{5-13}),
$(v,i-p)\in J_i\subset\iota(T_i)$ if $i\ge p$. 

Since $W_i|_{[0,i-1]\times[0,i]}$ ends at $(i-1,v)$, we have
\begin{equation}\label{5.10.9}
C_{i,v}\le W_i|_{[0,i-1]\times[0,i]}\le D_{i,v}.
\end{equation}
Since $(0,i)\notin J_i$, we have $W\le A_i$. In case $i\ge p$, Lemma~\ref{L5.3} implies $(v,i-p)\in J_i$. Thus, whether $i\ge p$ or not, we always have $W\ge E_{i,v}$. It follows that
\begin{equation}\label{SEW}
S_i\vee E_{i,v}\le W_i\le T_i\wedge A_i.
\end{equation}
Combining (\ref{5.10.9}) and (\ref{SEW}), we get
\[
\Gamma_i\le W_i|_{[0,i-1]\times[0,i]}\le \Lambda_i.
\]

(iii) Assume that $W_i$ ends at $(i,u)$. By (\ref{5-1}), $W_i$ starts with
$(u,i)$. Thus $F_{i,u}\le W_i\le G_{i,u}$ and $M_{i,u}\le W_i\le N_{i,u}$.
It follows that $\Phi_i\le W_i\le \Psi_i$. Condition (\ref{5.10.6}) is obvious.

{\it Sufficiency}. For the sufficiency in cases (i) -- (iii), we only give the proof for case (iii). The proofs for cases (i) and (ii) are similar.

By Lemmas~\ref{L5.1} and \ref{L5.2}, it suffice to show that conditions 
(\ref{5-1}) -- (\ref{5-3}) and (\ref{5-5}) are satisfied. Since 
\[
F_{i,u}\vee M_{i,u}\le W_i\le G_{i,u}\wedge N_{i,u},
\]
$W_i$ must start from $(u,i)$ and end at $(i,u)$. Hence (\ref{5-1}) holds.
Since $S_i\le W_i\le T_i$, by Lemma~\ref{L5.9}, (\ref{5-2}) and (\ref{5-3})
follow. Let $v=\max\{y:(i-1,y)\in J_i\}$. Then $v-u\le p^2$. Thus 
$(v,i-p)\prec (u+p^2,i-p)\prec (u,i)\in J_i$. Hence $(v,i-p)\in J_i$ and 
consequently, (\ref{5-13}) holds. By Lemma~\ref{L5.3}, (\ref{5-5}) follows. 
\end{proof}

\begin{lem}\label{L5.11}
In case {\rm (i)} of Theorem~\ref{T5.10}, condition {\rm (\ref{5.10.1})} implies
\begin{equation}\label{5.10.10}
S_i\le T'_i.
\end{equation}
In case {\rm (ii)}, conditions {\rm (\ref{5.10.2})} and {\rm (\ref{5.10.3})} imply
\begin{equation}\label{5.10.11}
\Gamma_i\le \Lambda_i.
\end{equation}
In case {\rm (iii)}, condition {\rm (\ref{5.10.6})} implies
\begin{equation}\label{5.10.12}
\Phi_i\le \Psi_i.
\end{equation}
\end{lem}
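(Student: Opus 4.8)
The plan is to prove each of (\ref{5.10.10})--(\ref{5.10.12}) directly, by passing from walks to the ideals they bound. Recall that $W_1\le W_2$ means $\iota(W_1)\subseteq\iota(W_2)$, that $W_1\vee W_2$ and $W_1\wedge W_2$ bound $\iota(W_1)\cup\iota(W_2)$ and $\iota(W_1)\cap\iota(W_2)$, and that $W|_R$ bounds $\iota(W)\cap R$. Hence each of the three inequalities unwinds into a bounded list of containments $\iota(P)\subseteq\iota(Q)$ (after intersecting with the relevant rectangle), where $P$ runs over $S_i,E_{i,v},C_{i,v},F_{i,u},M_{i,u}$ and $Q$ runs over $T_i,A_i,B_i,D_{i,v},G_{i,u},N_{i,u}$. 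The single containment $\iota(S_i)\subseteq\iota(T_i)$ is already recorded in the Remark following Lemma~\ref{L5.9}; everything else will come from two elementary facts about the extremal walks of Theorem~\ref{T5.10}.

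The two facts are: (a) the \emph{lowest} walk of a rectangle $R$ through (or starting, or ending, at) a point $q$ bounds the smallest ideal $(q+D)\cap R$ of $R$ containing $q$ --- this covers $F_{i,u}$ (with $q=(u,i)$), $M_{i,u}$ (with $q=(i,u)$), $E_{i,v}$ (with $q=(v,i-p)$), and $C_{i,v}$ (with $q=(i-1,v)$); and (b) the \emph{highest} walk of $R$ starting, or ending, just inside a boundary point $q'$ bounds the largest ideal $\{w\in R:q'\not\prec w\}$ of $R$ not containing $q'$ --- this covers $A_i$ (with $q'=(0,i)$), $B_i$ (with $q'=(i-1,0)$), $D_{i,v}$ (with $q'=(i-1,v+1)$), $G_{i,u}$ (with $q'=(u+1,i)$), and $N_{i,u}$ (with $q'=(i,u+1)$); in each case $q'\not\prec q$, a one-line check that $q'-q\notin D$, is what legitimises the walk specification. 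With (a) and (b) in hand, every required containment $\iota(P)\subseteq\iota(Q)$ has one of two shapes: if $P$ is a principal-ideal walk, then $\iota(P)=(q+D)\cap R\subseteq\iota(Q)$ follows from $q\in\iota(Q)$ together with $\iota(Q)$ being an ideal, and each such membership is literally one of the clauses ``$q\in\iota(T_i)$'' in (\ref{5.10.1})/(\ref{5.10.3})/(\ref{5.10.6}); if instead $Q$ is a walk omitting $q'$, we must see that $q'$ is not below any point of $\iota(P)$ --- for $P=S_i$ this is exactly a non-membership clause ``$q'\notin\iota(S_i)$'' of (\ref{5.10.1})/(\ref{5.10.3})/(\ref{5.10.6}) (here we use that $\iota(S_i)$ is an ideal), and for $P$ a principal-ideal walk through $q$ it reduces to $q'\not\prec q$, i.e.\ $q'-q\notin D$.

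Carrying this out: Case (i) is immediate, since $\iota(S_i)\subseteq\iota(A_i)$ and $\iota(S_i)\subseteq\iota(B_i)$ are the non-membership clauses $(0,i),(i-1,0)\notin\iota(S_i)$ of (\ref{5.10.1}), giving $S_i\le T_i\wedge A_i\wedge B_i=T'_i$. Case (iii) expands $\iota(\Phi_i)=\iota(S_i)\cup\iota(F_{i,u})\cup\iota(M_{i,u})\subseteq\iota(T_i)\cap\iota(G_{i,u})\cap\iota(N_{i,u})=\iota(\Psi_i)$ into nine containments: two are the non-memberships $(u+1,i),(i,u+1)\notin\iota(S_i)$ and two the memberships $(u,i),(i,u)\in\iota(T_i)$ from (\ref{5.10.6}), and the rest come down to checking that each of $(1,0)$, $(0,1)$, $(i-u,u+1-i)$, $(u+1-i,i-u)$ violates one of the two inequalities cutting out $D$. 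Case (ii) is the technical heart and the step I expect to fight with: the walks there live in $[0,i-1]\times[0,i]$ rather than $[0,i]^2$, so the restriction operator must be pushed carefully through $\vee$ and $\wedge$ (via $(\Omega\cap R)\cup(\Omega'\cap R)=(\Omega\cup\Omega')\cap R$ and the restriction/extension identities at the close of Section~3), and two of the $D$-computations --- $(0,i)\not\prec(i-1,v)$ and $(i-1,v+1)\not\prec(v,i-p)$ (the latter for $i\ge p$) --- genuinely invoke the bound $v<\min\{p^2,\frac{p-1}{p}i+\frac1p\}$ of (\ref{5.10.3}): it is precisely that inequality which ejects the difference vectors from $D$. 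Nothing here is conceptually hard; the risk is purely bookkeeping --- pairing each corner point with the right walk and spotting which of the two defining inequalities of $D$ is the one that fails. One could instead deduce the three inequalities abstractly, as in the Remark after Lemma~\ref{L5.9}, from the existence via Lemma~\ref{L2.1}(i) of a consistent $J_i$ of the prescribed type and parameter; but pinning down that type seems to require the same estimates, so the direct route looks cleaner.
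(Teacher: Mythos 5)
Your framework is correct and is essentially the paper's own: both proofs unwind the walk inequalities into containments between the ideals they bound and reduce those to pointwise non-precedence checks $q'\not\prec q$ of exactly the type your facts (a) and (b) describe. Case (i) is indeed immediate, and your four $D$-checks for case (iii) --- $(1,0)$, $(0,1)$, $(i-u,u+1-i)$, $(u+1-i,i-u)$ --- are a complete list and are each trivially ejected from $D$.

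In case (ii), however, your bookkeeping drops one of the three nontrivial checks. Expanding $\Gamma_i\le\Lambda_i$ into nine containments (each of $S_i|_{[0,i-1]\times[0,i]}$, $E_{i,v}|_{[0,i-1]\times[0,i]}$, $C_{i,v}$ against each of $T_i|_{[0,i-1]\times[0,i]}$, $A_i|_{[0,i-1]\times[0,i]}$, $D_{i,v}$), the nontrivial ones are exactly the paper's (5.10.13)--(5.10.15): $E_{i,v}$ vs $A_i$, $E_{i,v}$ vs $D_{i,v}$, and $C_{i,v}$ vs $A_i$. You name only the latter two, $(0,i)\not\prec(i-1,v)$ and $(i-1,v+1)\not\prec(v,i-p)$, both of which draw on the $\frac{p-1}{p}i+\frac1p$ half of the bound. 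The one you omitted is $E_{i,v}\le A_i$, i.e.\ $(0,i)\not\prec(v,i-p)$ for $i\ge p$; here the difference is $(-v,p)$ and $-v+p\cdot p=p^2-v>0$ is what expels it from $D$, so this is precisely the check that consumes the $v<p^2$ half of {\rm (\ref{5.10.3})}. The fact that your stated bound is $v<\min\{p^2,\frac{p-1}{p}i+\frac1p\}$ while neither of your named checks touches $p^2$ was itself a signal that something was dropped. This is the bookkeeping slip you warned yourself about; restoring this third check brings your sketch into agreement with the paper's proof.
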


\noindent{\bf Remark}. Lemma~\ref{L5.11} assures the existence of $W_i$ in Theorem~\ref{T5.10} provided condition (\ref{5.10.1}) in case (i), or
conditions (\ref{5.10.2}) and (\ref{5.10.3}) in case (ii), or 
condition (\ref{5.10.6}) in case (iii) are satisfied.

\begin{proof}[Proof of Lemma~\ref{L5.11}] It is obvious that (\ref{5.10.1})
implies (\ref{5.10.10}) and that (\ref{5.10.6}) implies (\ref{5.10.12}). 
We only prove that (\ref{5.10.2}) and (\ref{5.10.3}) imply (\ref{5.10.11}).

We show that each of the walks $S_i|_{[0,i-1]\times[0,i]}$,
$E_{i,v}|_{[0,i-1]\times[0,i]}$ and $C_{i,v}$ is $\le$ each of the
walks $T_i|_{[0,i-1]\times[0,i]}$, $A_i|_{[0,i-1]\times[0,i]}$ and
$D_{i,v}$. Most of these relations are obvious. The only ones that need
proofs are 
\begin{equation}\label{5.10.13}
E_{i,v}|_{[0,i-1]\times[0,i]}\le A_i|_{[0,i-1]\times[0,i]},
\end{equation}
\begin{equation}\label{5.10.14}
E_{i,v}|_{[0,i-1]\times[0,i]}\le D_{i,v},
\end{equation}
\begin{equation}\label{5.10.15}
C_{i,v}\le A_i|_{[0,i-1]\times[0,i]}.
\end{equation}
 
To prove (\ref{5.10.13}), we may assume $i\ge p$. It suffices to show that
$(0,i)\notin\iota(E_{i,v})$, i.e., $i-p+\frac 1p v<i$. (See Figure 27(a).)
This is true since $v<p^2$.

To prove (\ref{5.10.14}), we may again assume $i\ge p$. It suffices to show that $i-p\le v+(i-1-v)p^2$, i.e., $v\le i-\frac p{p+1}$. (See Figure 27(b).)
This follows from the inequality $v<\frac{p-1}p i+\frac 1p$ in (\ref{5.10.3}).

To prove (\ref{5.10.15}), it suffices to have $v+\frac 1p(i-1)<i$.
(See Figure 27(c).) This is given by (\ref{5.10.3}).
\end{proof}

\vskip5mm
\setlength{\unitlength}{4mm}
\[
\begin{picture}(7,7)
\put(0,0){\vector(1,0){7}}
\put(0,0){\vector(0,1){7}}
\put(0,6){\line(1,0){6}}
\put(6,0){\line(0,1){6}}
\put(7,2.67){\line(-3,1){7.5}}
\put(0,5){\makebox(0,0){$\scriptstyle \bullet$}}
\put(4,3.67){\makebox(0,0){$\scriptstyle \bullet$}}
\put(0.1,5){\makebox(0,0)[bl]{$\scriptstyle i-p+\frac 1p v$}}
\put(3.5,4){\makebox(0,0)[bl]{$\scriptstyle (v,i-p)$}}
\put(5,3){\makebox(0,0)[tr]{$\scriptstyle \text{slope}=-\frac 1p$}}
\put(6,-0.2){\makebox(0,0)[t]{$\scriptstyle i$}}
\put(-0.1,6){\makebox(0,0)[r]{$\scriptstyle i$}}
\put(0,-1){\makebox(0,0)[l]{$\scriptstyle \text{(a) Proof of (\ref{5.10.13})}$}}
\end{picture}
\kern1.7cm
\begin{picture}(7,7)
\put(0,0){\vector(1,0){7}}
\put(0,0){\vector(0,1){7}}
\put(0,6){\line(1,0){6}}
\put(6,0){\line(0,1){6}}
\put(4,1.8){\line(0,1){6.2}}
\put(5.25,2){\line(-1,4){1.5}}
\put(4,7){\makebox(0,0){$\scriptstyle \bullet$}}
\put(4,3){\makebox(0,0){$\scriptstyle \bullet$}}
\put(5.25,2){\makebox(0,0){$\scriptstyle \bullet$}}
\put(3.8,3){\makebox(0,0)[r]{$\scriptstyle (v,i-p)$}}
\put(4.3,7){\makebox(0,0)[l]{$\scriptstyle (v,\,v+(i-1-v)p^2)$}}
\put(4.8,1.8){\makebox(0,0)[t]{$\scriptstyle (i-1,v)$}}
\put(3.5,7.5){\makebox(0,0)[r]{$\scriptstyle \text{slope}=-p^2$}}
\put(6,-0.2){\makebox(0,0)[t]{$\scriptstyle i$}}
\put(-0.1,6){\makebox(0,0)[r]{$\scriptstyle i$}}
\put(0,-1){\makebox(0,0)[l]{$\scriptstyle \text{(b) Proof of (\ref{5.10.14})}$}}
\end{picture}
\kern1.7cm
\begin{picture}(7,7)
\put(0,0){\vector(1,0){7}}
\put(0,0){\vector(0,1){7}}
\put(0,6){\line(1,0){6}}
\put(6,0){\line(0,1){6}}
\put(-1,5){\line(3,-1){8}}
\put(0,4.67){\makebox(0,0){$\scriptstyle \bullet$}}
\put(5,3){\makebox(0,0){$\scriptstyle \bullet$}}
\put(0.2,4.7){\makebox(0,0)[bl]{$\scriptstyle v+\frac 1p (i-1)$}}
\put(5,2.9){\makebox(0,0)[tr]{$\scriptstyle (i-1,v)$}}
\put(2.8,3.8){\makebox(0,0)[bl]{$\scriptstyle \text{slope}=-\frac 1p$}}
\put(-0.2,6){\makebox(0,0)[r]{$\scriptstyle i$}}
\put(6,-0.2){\makebox(0,0)[t]{$\scriptstyle i$}}
\put(0,-1){\makebox(0,0)[l]{$\scriptstyle \text{(c) Proof of (\ref{5.10.15})}$}}
\end{picture}
\]
\vskip3mm
\[
\text{Figure 27. Proofs of (\ref{5.10.13}) -- (\ref{5.10.15}) }
\]
\vskip3mm

\begin{exmp}\label{E5.1}
\rm
Let $p=3$ and $m=9$ ($n=\frac m3(p-1)=6$). In this example, we exhibit a consistent sequence of ideals
$J_0,\dots,J_6$ using Theorem~\ref{T5.10}. Figure 28 gives the boundaries $W_i=\omega(J_i)$ ($0\le i\le 6$) and the walks $S_i$ and $T_i$ which are needed for choosing $W_i$. The resulting $A$-invariant ideal of $[0,6]^3$,
\[
I=\bigcup_{i=0}^6\Bigl[(J_i\times\{j\})\cup(J_i\times\{j\})^A\cup(J_i\times\{j\})^{A^2}\Bigr],
\]
is depicted in Figure 30. The cross sections of $I$ on the parallels of the $xy$-planes, i.e., $J_{6,0},\dots,J_{6,5},
J_6$ are given in Figure 29 in terms of their boundaries $W_{6,0},\dots,W_{6,5},W_6$.
The $A$-symmetry of $I$ is clearly visible in Figure 30. However, the fact that $I$ is an ideal in 
$({\mathcal U},\prec)$ is not obvious from Figure 30.     
\end{exmp}

\vfill\eject

\vskip5mm
\setlength{\unitlength}{5mm}
\[
\begin{picture}(2,2)
\put(0,0){\vector(1,0){2}}
\put(0,0){\vector(0,1){2}}
\put(0,0){\makebox(0,0){$\scriptstyle \bullet$}}
\put(1,-1){\makebox(0,0){$\scriptstyle W_0$}}
\end{picture}
\kern2cm
\begin{picture}(2,2)
\put(0,0){\vector(1,0){2}}
\put(0,0){\vector(0,1){2}}
\put(0,1){\line(1,0){1}}
\put(1,0){\line(0,1){1}}
\put(1,1){\makebox(0,0){$\scriptstyle \bullet$}}
\put(1,-1){\makebox(0,0){$\scriptstyle W_1$}}
\put(0,-2){\makebox(0,0)[l]{$\scriptstyle S_1=\emptyset$}}
\put(0,-3){\makebox(0,0)[l]{$\scriptstyle T_1=(1,1)$}}
\end{picture}
\kern2cm
\begin{picture}(3,3)
\put(0,0){\vector(1,0){3}}
\put(0,0){\vector(0,1){3}}
\put(0,2){\line(1,0){2}}
\put(2,0){\line(0,1){2}}
\put(1,0){\line(0,1){0.1}}
\put(0,1){\line(1,0){0.1}}
\put(2,2){\makebox(0,0){$\scriptstyle \bullet$}}
\put(1.5,-1){\makebox(0,0){$\scriptstyle W_2$}}
\put(0,-2){\makebox(0,0)[l]{$\scriptstyle S_2=\emptyset$}}
\put(0,-3){\makebox(0,0)[l]{$\scriptstyle T_2=(2,2)$}}
\end{picture}
\kern2cm
\begin{picture}(4,4)
\put(0,0){\vector(1,0){4}}
\put(0,0){\vector(0,1){4}}
\put(0,3){\line(1,0){3}}
\put(3,0){\line(0,1){3}}
\put(1,3){\line(0,-1){1}}
\put(1,2){\line(1,0){1}}
\put(2,2){\line(0,-1){1}}
\put(2,1){\line(1,0){1}}
\multiput(1,0)(1,0){2}{\line(0,1){0.1}}
\multiput(0,1)(0,1){2}{\line(1,0){0.1}}
\put(1,3){\makebox(0,0){$\scriptstyle \bullet$}}
\put(1,2){\makebox(0,0){$\scriptstyle \bullet$}}
\put(2,2){\makebox(0,0){$\scriptstyle \bullet$}}
\put(2,1){\makebox(0,0){$\scriptstyle \bullet$}}
\put(3,1){\makebox(0,0){$\scriptstyle \bullet$}}
\put(2,-1){\makebox(0,0){$\scriptstyle W_3$}}
\put(0,-2){\makebox(0,0)[l]{$\scriptstyle S_3=\emptyset$}}
\put(0,-3){\makebox(0,0)[l]{$\scriptstyle T_3=(3,3)$}}
\end{picture}
\]

\vskip 1cm

\[
\begin{picture}(5,5)
\put(0,0){\vector(1,0){5}}
\put(0,0){\vector(0,1){5}}
\put(0,4){\line(1,0){4}}
\put(4,0){\line(0,1){4}}
\put(1,4){\line(0,-1){2}}
\put(1,2){\line(1,0){1}}
\put(2,2){\line(0,-1){1}}
\put(2,1){\line(1,0){2}}
\multiput(2,0)(1,0){2}{\line(0,1){0.1}}
\multiput(0,1)(0,1){3}{\line(1,0){0.1}}

\put(1,4){\makebox(0,0){$\scriptstyle \circ$}}
\put(1,2){\makebox(0,0){$\scriptstyle \circ$}}
\put(2,2){\makebox(0,0){$\scriptstyle \circ$}}
\put(2,1){\makebox(0,0){$\scriptstyle \circ$}}
\put(4,1){\makebox(0,0){$\scriptstyle \circ$}}
\put(0,0){\makebox(0,0){$\scriptstyle \times$}}
\put(1,0){\makebox(0,0){$\scriptstyle \times$}}
\put(1.5,-1){\makebox(0,0)[r]{$\scriptstyle S_4:$}}
\put(2,-1){\makebox(0,0){$\scriptstyle \times$}}
\put(3,-1){\makebox(0,0){$\scriptstyle \times$}}
\put(2,-1){\line(1,0){1}}
\put(1.5,-2){\makebox(0,0)[r]{$\scriptstyle T_4:$}}
\put(2,-2){\makebox(0,0){$\scriptstyle \circ$}}
\put(3,-2){\makebox(0,0){$\scriptstyle \circ$}}
\put(2,-2){\line(1,0){1}}
\end{picture}
\kern3cm
\begin{picture}(5,5)
\put(0,0){\vector(1,0){5}}
\put(0,0){\vector(0,1){5}}
\put(0,4){\line(1,0){4}}
\put(4,0){\line(0,1){4}}
\put(0,2){\line(1,0){2}}
\put(2,2){\line(0,-1){1}}
\put(2,1){\line(1,0){1}}
\put(3,1){\line(0,-1){1}}
\multiput(1,0)(1,0){2}{\line(0,1){0.1}}
\multiput(0,1)(0,2){2}{\line(1,0){0.1}}

\put(0,4){\makebox(0,0){$\scriptstyle \bullet$}}
\put(0,2){\makebox(0,0){$\scriptstyle \bullet$}}
\put(2,2){\makebox(0,0){$\scriptstyle \bullet$}}
\put(2,1){\makebox(0,0){$\scriptstyle \bullet$}}
\put(3,1){\makebox(0,0){$\scriptstyle \bullet$}}
\put(3,0){\makebox(0,0){$\scriptstyle \bullet$}}
\put(4,0){\makebox(0,0){$\scriptstyle \bullet$}}
\put(2.5,-1){\makebox(0,0){$\scriptstyle W_4$}}
\end{picture}
\]

\vskip 1cm

\[
\begin{picture}(6,6)
\put(0,0){\vector(1,0){6}}
\put(0,0){\vector(0,1){6}}
\put(0,5){\line(1,0){5}}
\put(5,0){\line(0,1){5}}
\put(0,2){\line(1,0){2}}
\put(2,2){\line(0,-1){1}}
\put(2,1){\line(1,0){1}}
\put(3,1){\line(0,-1){1}}
\multiput(1,0)(1,0){2}{\line(0,1){0.1}}
\put(4,0){\line(0,1){0.1}}
\multiput(0,3)(0,1){2}{\line(1,0){0.1}}

\put(0,5){\makebox(0,0){$\scriptstyle \circ$}}
\put(0,2){\makebox(0,0){$\scriptstyle \circ$}}
\put(2,2){\makebox(0,0){$\scriptstyle \circ$}}
\put(2,1){\makebox(0,0){$\scriptstyle \circ$}}
\put(3,1){\makebox(0,0){$\scriptstyle \circ$}}
\put(3,0){\makebox(0,0){$\scriptstyle \circ$}}
\put(5,0){\makebox(0,0){$\scriptstyle \circ$}}
\put(0,0){\makebox(0,0){$\scriptstyle \times$}}
\put(0,1){\makebox(0,0){$\scriptstyle \times$}}
\put(1.5,-1){\makebox(0,0)[r]{$\scriptstyle S_5:$}}
\put(2,-1){\makebox(0,0){$\scriptstyle \times$}}
\put(3,-1){\makebox(0,0){$\scriptstyle \times$}}
\put(2,-1){\line(1,0){1}}
\put(1.5,-2){\makebox(0,0)[r]{$\scriptstyle T_5:$}}
\put(2,-2){\makebox(0,0){$\scriptstyle \circ$}}
\put(3,-2){\makebox(0,0){$\scriptstyle \circ$}}
\put(2,-2){\line(1,0){1}}
\end{picture}
\kern3cm
\begin{picture}(6,6)
\put(0,0){\vector(1,0){6}}
\put(0,0){\vector(0,1){6}}
\put(0,5){\line(1,0){5}}
\put(5,0){\line(0,1){5}}
\put(0,2){\line(1,0){1}}
\put(1,2){\line(0,-1){1}}
\put(1,1){\line(1,0){2}}
\put(3,1){\line(0,-1){1}}
\multiput(1,0)(1,0){2}{\line(0,1){0.1}}
\put(0,1){\line(1,0){0.1}}
\multiput(0,3)(0,1){2}{\line(1,0){0.1}}

\put(0,2){\makebox(0,0){$\scriptstyle \bullet$}}
\put(1,2){\makebox(0,0){$\scriptstyle \bullet$}}
\put(1,1){\makebox(0,0){$\scriptstyle \bullet$}}
\put(3,1){\makebox(0,0){$\scriptstyle \bullet$}}
\put(3,0){\makebox(0,0){$\scriptstyle \bullet$}}
\put(4,0){\makebox(0,0){$\scriptstyle \bullet$}}
\put(3,-1){\makebox(0,0){$\scriptstyle W_5$}}
\end{picture}
\]

\vskip 1cm

\[
\begin{picture}(7,7)
\put(0,0){\vector(1,0){7}}
\put(0,0){\vector(0,1){7}}
\put(0,6){\line(1,0){6}}
\put(6,0){\line(0,1){6}}
\put(0,2){\line(1,0){1}}
\put(1,2){\line(0,-1){1}}
\put(1,1){\line(1,0){1}}
\put(2,1){\line(0,-1){1}}
\multiput(4,0)(1,0){2}{\line(0,1){0.1}}
\put(1,0){\line(0,1){0.1}}
\multiput(0,3)(0,1){3}{\line(1,0){0.1}}
\put(0,1){\line(1,0){0.1}}

\put(0,2){\makebox(0,0){$\scriptstyle \circ$}}
\put(1,2){\makebox(0,0){$\scriptstyle \circ$}}
\put(1,1){\makebox(0,0){$\scriptstyle \circ$}}
\put(2,1){\makebox(0,0){$\scriptstyle \circ$}}
\put(2,0){\makebox(0,0){$\scriptstyle \circ$}}
\put(3,0){\makebox(0,0){$\scriptstyle \circ$}}
\put(2,-1){\makebox(0,0)[l]{$\scriptstyle S_6=\emptyset$}}
\put(2,-2){\makebox(0,0)[l]{$\scriptstyle T_6:$}}
\put(3,-2){\makebox(0,0){$\scriptstyle \circ$}}
\put(4,-2){\makebox(0,0){$\scriptstyle \circ$}}
\put(3,-2){\line(1,0){1}}
\end{picture}
\kern3cm
\begin{picture}(7,7)
\put(0,0){\vector(1,0){7}}
\put(0,0){\vector(0,1){7}}
\put(0,6){\line(1,0){6}}
\put(6,0){\line(0,1){6}}
\put(0,1){\line(1,0){1}}
\put(1,1){\line(0,-1){1}}
\multiput(2,0)(1,0){4}{\line(0,1){0.1}}
\multiput(0,2)(0,1){4}{\line(1,0){0.1}}

\put(0,1){\makebox(0,0){$\scriptstyle \bullet$}}
\put(1,1){\makebox(0,0){$\scriptstyle \bullet$}}
\put(1,0){\makebox(0,0){$\scriptstyle \bullet$}}
\put(3.5,-1){\makebox(0,0){$\scriptstyle W_6$}}
\end{picture}
\]
\vskip5mm
\[
\text{Figure 28. Example~\ref{E5.1}, the walks $S_i$, $T_i$ and $W_i$}
\]

\vskip5mm
\setlength{\unitlength}{4mm}
\[
\begin{picture}(6,6)
\put(0,0){\line(1,0){6}}
\put(0,6){\line(1,0){6}}
\put(0,0){\line(0,1){6}}
\put(6,0){\line(0,1){6}}
\multiput(1,0)(1,0){5}{\line(0,1){0.1}}
\multiput(0,1)(0,1){5}{\line(1,0){0.1}}

\put(1,6){\makebox(0,0){$\scriptstyle \bullet$}}
\put(1,5){\makebox(0,0){$\scriptstyle \bullet$}}
\put(2,5){\makebox(0,0){$\scriptstyle \bullet$}}
\put(2,4){\makebox(0,0){$\scriptstyle \bullet$}}
\put(5,4){\makebox(0,0){$\scriptstyle \bullet$}}
\put(5,1){\makebox(0,0){$\scriptstyle \bullet$}}
\put(6,1){\makebox(0,0){$\scriptstyle \bullet$}}

\put(1,6){\line(0,-1){1}}
\put(1,5){\line(1,0){1}}
\put(2,5){\line(0,-1){1}}
\put(2,4){\line(1,0){3}}
\put(5,4){\line(0,-1){3}}
\put(5,1){\line(1,0){1}}
\put(3,-1){\makebox(0,0)[t]{$\scriptstyle W_{6,0}$}}

\end{picture}
\kern1cm
\begin{picture}(6,6)
\put(0,0){\line(1,0){6}}
\put(0,6){\line(1,0){6}}
\put(0,0){\line(0,1){6}}
\put(6,0){\line(0,1){6}}
\multiput(1,0)(1,0){5}{\line(0,1){0.1}}
\multiput(0,1)(0,1){5}{\line(1,0){0.1}}

\put(1,6){\makebox(0,0){$\scriptstyle \bullet$}}
\put(1,5){\makebox(0,0){$\scriptstyle \bullet$}}
\put(2,5){\makebox(0,0){$\scriptstyle \bullet$}}
\put(2,3){\makebox(0,0){$\scriptstyle \bullet$}}
\put(5,3){\makebox(0,0){$\scriptstyle \bullet$}}
\put(5,1){\makebox(0,0){$\scriptstyle \bullet$}}
\put(6,1){\makebox(0,0){$\scriptstyle \bullet$}}

\put(1,6){\line(0,-1){1}}
\put(1,5){\line(1,0){1}}
\put(2,5){\line(0,-1){2}}
\put(2,3){\line(1,0){3}}
\put(5,3){\line(0,-1){2}}
\put(5,1){\line(1,0){1}}
\put(3,-1){\makebox(0,0)[t]{$\scriptstyle W_{6,1}$}}

\end{picture}
\kern1cm
\begin{picture}(6,6)
\put(0,0){\line(1,0){6}}
\put(0,6){\line(1,0){6}}
\put(0,0){\line(0,1){6}}
\put(6,0){\line(0,1){6}}
\multiput(1,0)(1,0){5}{\line(0,1){0.1}}
\multiput(0,1)(0,1){5}{\line(1,0){0.1}}

\put(0,5){\makebox(0,0){$\scriptstyle \bullet$}}
\put(1,5){\makebox(0,0){$\scriptstyle \bullet$}}
\put(1,4){\makebox(0,0){$\scriptstyle \bullet$}}
\put(2,4){\makebox(0,0){$\scriptstyle \bullet$}}
\put(2,2){\makebox(0,0){$\scriptstyle \bullet$}}
\put(4,2){\makebox(0,0){$\scriptstyle \bullet$}}
\put(4,1){\makebox(0,0){$\scriptstyle \bullet$}}
\put(5,1){\makebox(0,0){$\scriptstyle \bullet$}}
\put(5,0){\makebox(0,0){$\scriptstyle \bullet$}}

\put(0,5){\line(1,0){1}}
\put(1,5){\line(0,-1){1}}
\put(1,4){\line(1,0){1}}
\put(2,4){\line(0,-1){2}}
\put(2,2){\line(1,0){2}}
\put(4,2){\line(0,-1){1}}
\put(4,1){\line(1,0){1}}
\put(5,1){\line(0,-1){1}}
\put(3,-1){\makebox(0,0)[t]{$\scriptstyle W_{6,2}$}}

\end{picture}
\kern1cm
\begin{picture}(6,6)
\put(0,0){\line(1,0){6}}
\put(0,6){\line(1,0){6}}
\put(0,0){\line(0,1){6}}
\put(6,0){\line(0,1){6}}
\multiput(1,0)(1,0){5}{\line(0,1){0.1}}
\multiput(0,1)(0,1){5}{\line(1,0){0.1}}

\put(0,5){\makebox(0,0){$\scriptstyle \bullet$}}
\put(1,5){\makebox(0,0){$\scriptstyle \bullet$}}
\put(1,2){\makebox(0,0){$\scriptstyle \bullet$}}
\put(2,2){\makebox(0,0){$\scriptstyle \bullet$}}
\put(2,1){\makebox(0,0){$\scriptstyle \bullet$}}
\put(3,1){\makebox(0,0){$\scriptstyle \bullet$}}
\put(3,0){\makebox(0,0){$\scriptstyle \bullet$}}
\put(4,0){\makebox(0,0){$\scriptstyle \bullet$}}

\put(0,5){\line(1,0){1}}
\put(1,5){\line(0,-1){3}}
\put(1,2){\line(1,0){1}}
\put(2,2){\line(0,-1){1}}
\put(2,1){\line(1,0){1}}
\put(3,1){\line(0,-1){1}}
\put(3,-1){\makebox(0,0)[t]{$\scriptstyle W_{6,3}$}}

\end{picture}
\]

\[
\begin{picture}(6,6)
\put(0,0){\line(1,0){6}}
\put(0,6){\line(1,0){6}}
\put(0,0){\line(0,1){6}}
\put(6,0){\line(0,1){6}}
\multiput(1,0)(1,0){5}{\line(0,1){0.1}}
\multiput(0,1)(0,1){5}{\line(1,0){0.1}}

\put(0,5){\makebox(0,0){$\scriptstyle \bullet$}}
\put(0,2){\makebox(0,0){$\scriptstyle \bullet$}}
\put(2,2){\makebox(0,0){$\scriptstyle \bullet$}}
\put(2,1){\makebox(0,0){$\scriptstyle \bullet$}}
\put(3,1){\makebox(0,0){$\scriptstyle \bullet$}}
\put(3,0){\makebox(0,0){$\scriptstyle \bullet$}}
\put(4,0){\makebox(0,0){$\scriptstyle \bullet$}}

\put(0,2){\line(1,0){2}}
\put(2,2){\line(0,-1){1}}
\put(2,1){\line(1,0){1}}
\put(3,1){\line(0,-1){1}}
\put(3,-1){\makebox(0,0)[t]{$\scriptstyle W_{6,4}$}}

\end{picture}
\kern1cm
\begin{picture}(6,6)
\put(0,0){\line(1,0){6}}
\put(0,6){\line(1,0){6}}
\put(0,0){\line(0,1){6}}
\put(6,0){\line(0,1){6}}
\multiput(1,0)(1,0){5}{\line(0,1){0.1}}
\multiput(0,1)(0,1){5}{\line(1,0){0.1}}

\put(0,2){\makebox(0,0){$\scriptstyle \bullet$}}
\put(1,2){\makebox(0,0){$\scriptstyle \bullet$}}
\put(1,1){\makebox(0,0){$\scriptstyle \bullet$}}
\put(3,1){\makebox(0,0){$\scriptstyle \bullet$}}
\put(3,0){\makebox(0,0){$\scriptstyle \bullet$}}
\put(4,0){\makebox(0,0){$\scriptstyle \bullet$}}

\put(0,2){\line(1,0){1}}
\put(1,2){\line(0,-1){1}}
\put(1,1){\line(1,0){2}}
\put(3,1){\line(0,-1){1}}
\put(3,-1){\makebox(0,0)[t]{$\scriptstyle W_{6,5}$}}

\end{picture}
\kern1cm
\begin{picture}(6,6)
\put(0,0){\line(1,0){6}}
\put(0,6){\line(1,0){6}}
\put(0,0){\line(0,1){6}}
\put(6,0){\line(0,1){6}}
\multiput(1,0)(1,0){5}{\line(0,1){0.1}}
\multiput(0,1)(0,1){5}{\line(1,0){0.1}}

\put(0,1){\makebox(0,0){$\scriptstyle \bullet$}}
\put(1,1){\makebox(0,0){$\scriptstyle \bullet$}}
\put(1,0){\makebox(0,0){$\scriptstyle \bullet$}}

\put(0,1){\line(1,0){1}}
\put(1,1){\line(0,-1){1}}
\put(3,-1){\makebox(0,0)[t]{$\scriptstyle W_6$}}

\end{picture}
\kern1cm
\begin{picture}(6,6)
\end{picture}
\]
\vskip5mm

\[
\text{Figure 29. Example~\ref{E5.1}, boundaries of the cross sections of $I$}
\]

\vskip5mm
\setlength{\unitlength}{8mm}
\[
\begin{picture}(11,11)
\put(10,4){\vector(1,0){1}}
\put(4,10){\vector(0,1){1}}
\put(1,1){\vector(-1,-1){1}}
\put(11,3.8){\makebox(0,0)[t]{$\scriptstyle x$}}
\put(4.1,11){\makebox(0,0)[l]{$\scriptstyle y$}}
\put(0,0.2){\makebox(0,0)[b]{$\scriptstyle z$}}
\put(10,3.8){\makebox(0,0)[t]{$\scriptstyle 6$}}
\put(4.1,10.2){\makebox(0,0)[bl]{$\scriptstyle 6$}}
\put(0.9,1){\makebox(0,0)[br]{$\scriptstyle 6$}}

\put(4,10){\line(1,0){1}}
\put(5,9){\line(1,0){1}}
\put(6,8){\line(1,0){3}}
\put(6,7){\line(1,0){3}}
\put(5.5,6.5){\line(1,0){3}}
\put(5.5,5.5){\line(1,0){2}}
\put(5,5){\line(1,0){2}}
\put(7,4){\line(1,0){1}}
\put(3.5,9.5){\line(1,0){1}}
\put(3.5,8.5){\line(1,0){2}}
\put(2.5,7.5){\line(1,0){1}}
\put(2.5,4.5){\line(1,0){1}}
\put(4,5){\line(1,0){3}}
\put(1.5,3.5){\line(1,0){1}}
\put(1.5,2.5){\line(1,0){3}}
\put(1,2){\line(1,0){1}}
\put(1,1){\line(1,0){1}}
\put(9,5){\line(1,0){1}}
\put(7.5,4.5){\line(1,0){2}}
\put(3,4){\line(1,0){1}}
\put(2.5,1.5){\line(1,0){3}}
\put(3,3){\line(1,0){1}}
\put(6,3){\line(1,0){2}}
\put(8.5,3.5){\line(1,0){1}}
\put(4.5,7.5){\line(1,0){1}}
\put(4,7){\line(1,0){1}}
\put(5,4){\line(1,0){1}}

\put(5,10){\line(0,-1){1}}
\put(6,9){\line(0,-1){2}}
\put(9,8){\line(0,-1){3}}
\put(10,5){\line(0,-1){1}}
\put(9.5,4.5){\line(0,-1){1}}
\put(8,4){\line(0,-1){1}}
\put(6,4){\line(0,-1){1}}
\put(4.5,2.5){\line(0,-1){1}}
\put(2.5,3.5){\line(0,-1){2}}
\put(1,2){\line(0,-1){1}}
\put(2,2){\line(0,-1){1}}
\put(1.5,3.5){\line(0,-1){1}}
\put(3.5,9.5){\line(0,-1){1}}
\put(4.5,9.5){\line(0,-1){2}}
\put(5.5,8.5){\line(0,-1){1}}
\put(3.5,7.5){\line(0,-1){3}}
\put(2.5,7.5){\line(0,-1){3}}
\put(2,7){\line(0,-1){3}}
\put(4,7){\line(0,-1){2}}
\put(5,7){\line(0,-1){3}}
\put(7,5){\line(0,-1){1}}
\put(7.5,5.5){\line(0,-1){1}}
\put(8.5,6.5){\line(0,-1){3}}
\put(5.5,6.5){\line(0,-1){1}}
\put(4,4){\line(0,-1){1}}
\put(3,4){\line(0,-1){1}}

\put(4,10){\line(-1,-1){0.5}}
\put(5,10){\line(-1,-1){0.5}}
\put(5,9){\line(-1,-1){1.5}}
\put(6,9){\line(-1,-1){0.5}}
\put(6,7){\line(-1,-1){0.5}}
\put(9,7){\line(-1,-1){0.5}}
\put(9,5){\line(-1,-1){1}}
\put(10,5){\line(-1,-1){0.5}}
\put(10,4){\line(-1,-1){0.5}}
\put(3.5,8.5){\line(-1,-1){1.5}}
\put(2.5,4.5){\line(-1,-1){1}}
\put(5.5,5.5){\line(-1,-1){1.5}}
\put(7.5,5.5){\line(-1,-1){0.5}}
\put(7.5,4.5){\line(-1,-1){0.5}}
\put(6,4){\line(-1,-1){1.5}}
\put(6,3){\line(-1,-1){1.5}}
\put(7,3){\line(-1,-1){1.5}}
\put(1.5,2.5){\line(-1,-1){0.5}}
\put(2.5,1.5){\line(-1,-1){0.5}}
\put(3,4){\line(-1,-1){0.5}}
\put(3,3){\line(-1,-1){1}}
\put(4,5){\line(-1,-1){0.5}}
\put(4.5,7.5){\line(-1,-1){0.5}}
\put(5.5,7.5){\line(-1,-1){0.5}}
\put(5,4){\line(-1,-1){1}}
\put(8.5,3.5){\line(-1,-1){0.5}}

\end{picture}
\]

\vskip5mm
\[
\text{Figure 30. Example~\ref{E5.1}, the $A$-invariant ideal $I$}
\]



\begin{thebibliography}{99}
\bibitem{Ber96-1}
T. P. Berger, {\it Automorphism groups and permutation groups of affine-invariant codes},
Finite fields and applications (Glasgow, 1995), 31 -- 45, London Math. Soc. Lecture Note Ser. 233, Cambridge
Univ. Press, Cambridge, 1996.


\bibitem{Ber96}
T. P. Berger and P. Charpin,
{\it The permutation group of affine-invariant extended cyclic codes},
IEEE Trans. Inform. Theory {\bf 42} (1996), 2194 -- 2209.

\bibitem{Ber99}
T. P. Berger and P. Charpin,
{\it The automorphism group of BCH codes and of some affine-invariant codes on an extension field},
Designs, Codes and Cryptogr. {\bf 18} (1999), 29 -- 53.

\bibitem{Cha90}
P. Charpin, {\it Codes cyclique \'etendus affines-invariants et antichains d'un ensemble partiellement ordonn\'e},
Discrete Math. {\bf 80} (1990), 229 -- 247.
 
\bibitem{Cha98}
P. Charpin, {\it Open problems on cyclic codes}, Handbook of coding theory, Vol. I, II, 963--1063, North-Holland, Amsterdam, 1998.

 

\bibitem{Cha94}
P. Charpin and F. Levy-Dit-Vehel,
{\it On self-dual affine-invariant codes},
J. Combin. Theory A {\bf 67} (1994), 223-244.

\bibitem{Del70}
P. Delsarte,
{\it On cyclic codes that are invariant under the general linear group},
IEEE Trans. Inform. Theory {\bf 16} (1970), 760 -- 769.

\bibitem{Hou}
X. Hou,
{\it Enumeration of certain affine-invariant extended cyclic codes}, J. Combin. Theory A, {\bf 110} (2005),  71 -- 95.



\bibitem{Kas68}
T. Kasami, S. Lin and W. W. Peterson,
{\it Some results on cyclic codes which are invariant under the affine
group and their applications},
Inform. Contr. {\bf 11} (1968), 475 -- 496. 
\end{thebibliography}
\end{document}